\documentclass[12pt]{amsart}
\usepackage{latexsym}
\usepackage{amsmath}
\usepackage{amsfonts}
\usepackage{amsthm}
\usepackage{amssymb}
\usepackage{ifthen}
\usepackage{enumerate}
\usepackage[T1]{fontenc}
\usepackage[mathscr]{eucal}
\newcommand*{\rom}[1]{\expandafter\@slowromancap\romannumeral #1@}

\DeclareFontFamily{U}{mathx}{}
\DeclareFontShape{U}{mathx}{m}{n}{<-> mathx10}{}
\DeclareSymbolFont{mathx}{U}{mathx}{m}{n}
\DeclareMathAccent{\widehat}{0}{mathx}{"70}
\DeclareMathAccent{\widecheck}{0}{mathx}{"71}
\def\eq#1{{\rm(\ref{E#1})}}
\def\Eq#1#2{\ifthenelse{\equal{#1}{*}}
  {\begin{equation*}\begin{aligned}#2\end{aligned}\end{equation*}}
  {\begin{equation}\begin{aligned}\label{E#1}#2\end{aligned}\end{equation}}}

\newcounter{allenv}[section]

\newtheorem{thm}[allenv]{Theorem}
\newtheorem{prop}[allenv]{Proposition}
\newtheorem{coro}[allenv]{Corollary}
\newtheorem{lemm}[allenv]{Lemma}

\theoremstyle{remark}
\newtheorem{remark}[allenv]{Remark}
\newtheorem{exmp}[allenv]{Example}

\theoremstyle{definition}

\newcommand{\NN}{\mathbb{N}}

\newcommand{\RR}{\mathbb{R}}

\newcommand{\supp}{\mbox{\rm supp}}

\newcommand{\map}{\to}

\def\diam{\mbox{\rm diam}}
\def\conv{\mbox{\rm conv}}
\author[Tibor Kiss]{Tibor Kiss}
\author[P\'eter T\'oth]{P\'eter T\'oth}

\title[Improved regularity for a composite functional equation]{Improved regularity for a composite functional equation stemming from the theory of means} 

\address{Institute of Mathematics,
University of Debrecen,
4002 Debrecen, Pf.~400, Hungary}
\email{toth.peter@science.unideb.hu}
\email{kiss.tibor@science.unideb.hu}

\keywords{regularity improvement, composite functional equation, 
additive function, affine function} 

\subjclass[2020]{39B22, 39B72, 26B05}

\thanks{Partial support for the first author's research was provided by NKFIH Grant K-134191 and by funding from the HUN-REN Hungarian Research Network. The research of the second author has been supported by 
the EK\"OP-24-0 University Research Scholarship Program of 
the Ministry for Culture and Innovation 
from the source of the 
National Research, Development and Innovation Fund; 
by the PhD Excellence Scholarship from the 
Count Istv\'an Tisza Foundation 
for the University of Debrecen; 
and by the University of Debrecen Program 
for Scientific Publication}

\def\eq#1{{\rm(\ref{E#1})}}

\def\Eq#1#2{\ifthenelse{\equal{#1}{*}}
	{\begin{equation*}\begin{aligned}[]#2\end{aligned}\end{equation*}}
	{\begin{equation}\begin{aligned}\label{E#1}#2\end{aligned}\end{equation}}}

\begin{document}

\begin{abstract}
In this paper we describe the solutions of 
the functional equation 
\begin{equation*}
F\Big(\frac{x+y}2\Big)+f_1(x)+f_2(y)= 
G \big(g_1(x)+g_2(y)) 
\end{equation*} 
defined on an open subinterval of $ \mathbb{R} $. 
Improving previous results we assume differentiability on 
each involved function, eliminate a former condition on 
$ g'_1 $ and $ g'_2 \, $, moreover we determine a brand 
new family of solutions. We also present a particular member 
of this class as an example. 
In order to achieve this, we strengthen known results 
about certain auxiliary functional equations as well. 
\end{abstract}

\maketitle

\section{Introduction}\label{Intro}

This paper serves both as a continuation and a revision of the paper \cite[T. Kiss, 2024]{Kis22}. We describe the solutions of the Pexider Composite Functional Equation
\begin{equation}\label{eq-Invariance-eq}
F\Big(\frac{x+y}2\Big)+f_1(x)+f_2(y)= 
G \big(g_1(x)+g_2(y)) 
\end{equation} 
under mild regularity assumptions on the six unknown functions $F,f_1,f_2,g_1,g_2:I\to\RR$ and $G:g_1(I)+g_2(I)\to\RR$. Given that the equation in question is an equivalent reformulation of the invariance equation of generalized weighted quasi-arithmetic means, the natural conditions for the unknown functions would be continuity and strict monotonicity. Moreover, one may even assume that $g_1$ and $g_2$ are strictly monotone functions in the same sense. This condition also plays an important role in the present paper. The precise manner in which equation \eqref{eq-Invariance-eq} is connected to the invariance equation is detailed in \cite{Kis22}. 

For the sake of completeness, 
we have to mention the paper of 
Matkowski \cite{Mat10}, where class of 
generalized weighted quasi-arithmetic means is introduced. 
The invariance problem for these means covers many 
notable and important invariance problems about various 
types of means. 
Concerning this topic, we would like to turn the 
attention of the interested reader towards the works of 
Matkowski \cite{Mat99}, Jarczyk \cite{JM06, Jar07}, 
Daróczy, Páles \cite{DP01, DP02} and 
Maksa \cite{DMP00}  
A detailed summary about invariance problems of means 
can be found in the survey paper of 
Jarczyk and Jarczyk \cite{JJ18}. 

However, in this paper we focus exclusively 
on the equation \eqref{eq-Invariance-eq} 
itself and not its background.
In the paper \cite{Kis22}, continuously differentiable solutions were described under the additional and rather unnatural condition that the derivatives of the functions $g_1$ and $g_2$ do not vanish anywhere. Such a solution was referred in \cite{Kis22} to as regular. We emphasize that in the present paper, except for the Introduction, we do not intend to use this terminology. The aim of the paper is to improve the results of \cite{Kis22} in three different aspects.
\begin{enumerate}\itemsep=1mm
\item[I.] We are going to eliminate the aforementioned restriction on the derivatives of the functions $g_1$ and $g_2$.
\item[II.] We are going to correct an erroneous statement of \cite{Kis22}, which is the result of a flawed proof. Specifically, the paper \cite{Kis22} contains a Theorem of Alternative claiming that, for any regular solution of \eqref{eq-Invariance-eq}, the function $F$ is either affine over its entire domain or there does not exist any subinterval on which it is 
affine. The third possibility, namely that $F$ is affine only on certain subinterval, was erroneously excluded. In this paper, we also describe these solutions.
\item[III.] We are going to weaken the assumption of continuous differentiability to mere differentiability.
\end{enumerate}

To formulate our results in a more simple and compact form, we will need some notation and conventions. For example, we shall say that $(F,f_k,G,g_k)$ solves equation \eqref{eq-Invariance-eq} on the subinterval $J\subseteq I$ if, for the functions $F:I\to\RR$, $f_1:I\to\RR$, $f_2:I\to\RR$, $g_1:I\to\RR$, $g_2:I\to\RR$, and $G:g_1(I)+g_2(I)\to\RR$, the equation \eqref{eq-Invariance-eq} holds true for all $x,y\in J$.

This principle will apply generally. More precisely, whenever the variable $k$ appears as a subscript, it should always be understood that the given condition or statement holds for both cases $k = 1, 2$. However, in some instances, we will explicitly state the range of the index $k$.

In the sequel, we will divide the family of differentiable solutions of equation \eqref{eq-Invariance-eq} into three pairwise disjoint subfamilies. 

In Section \ref{A}, we describe those solutions for which the function $F$ is affine on the entire domain $I$. It then turns out that $G$ is also affine, the functions $g_1$ and $g_2$ can be chosen freely, and moreover, $f_k$ can be expressed in terms of $F$ and of some affine transformation of $g_k$. We emphasize that, in this case, apart from the continuity and monotonicity of $g_1$ and $g_2$, no other regularity assumptions are used. The main tool of this section is the 
famous theorem of Radó and Baker \cite{RB87}. 

In Section \ref{PA}, we discuss those solutions for which $F$ is affine only on some proper (i.e. non-empty, open, and different from $I$) subinterval of its domain. Such function will be called \emph{partially affine}. This family includes solutions that may be exactly once differentiable. We emphasize that this is a 
completely new class of solutions which has not appeared 
before in the literature of the equation \eqref{eq-Invariance-eq}. 
Therefore, besides the characterization theorem 
of partially affine solutions (namely, Theorem \ref{M3}) 
we are going to present an example of a particular 
solution consisting of exactly once continuously differentiable 
functions. 

The main auxiliary tool of this section is the auxiliary 
functional equation 
\[ 
\varphi\Big(\frac{u+v}2\Big)\big(\psi_1(u)-\psi_2(v)\big)=0
\]
where $ \varphi, \psi_k $ is constructed from 
$ G, g_k $ and their derivatives and they are defined on 
appropriate open intervals. 
In the works \cite{Kis24} and \cite{Tot} the authors 
have described the solutions of this equation under 
weak regularity assumptions such as measurability. Here 
we are going to use the characterization of the solutions 
in the case when $ \varphi $ is a derivative. 

In Section \ref{NA} we turn our attention to solutions 
where $ F $ is nowhere affine. 
Then the derivatives 
$ g'_1 $ and $ g'_2 $ do not vanish at any point. 
Consequently we are able to derive a system of two 
auxiliary functional equations 
\begin{equation}\label{eq-auxeqs}
\begin{cases}
\varphi \left( \frac{x+y}{2} \right) 
\left( \psi_1(x) + \psi_1(y) \right) = 
\varphi(x) \psi_1(x) + \varphi(y) \psi_1(y) \\ 
\varphi \left( \frac{x+y}{2} \right) 
\left( \psi_2(x) - \psi_2(y) \right) = 
\Psi_2(x) - \Psi_2(y) \, 
\end{cases} 
\qquad x,y \in I, 
\end{equation}
where $ \varphi, \psi_1\,, \psi_2 \,, \Psi_2 $ are constructed from 
the derivatives of the functions appearing in equation 
\eqref{eq-Invariance-eq}.  
Our main objective of Section \ref{NA} is to solve this 
system of equations. 
Both of the two equations 
has a rich literature on its own. The first equation 
has been solved by  Daróczy, Maksa and Páles \cite{DMP04}, 
then their result has been improved 
by the first author of this paper jointly with Páles 
\cite{KP18}, weakening the imposed regularity conditions. 
However, in every known result $ \varphi = \frac{1}{2} F' $ 
is assumed to be continuous, 
which is not necessarily fulfilled in our current setting. 
Hence, in order to utilize the previous results, 
in this paper we are going to prove that if 
$ \varphi $ is 
a derivative which is nowhere constant, while $\psi_1$ is 
the difference of the reciprocals of two nonvanishing derivatives, 
then the nontrivial solutions are continuous. 
The proof of this statement 
utilizes classical results from measure theory, 
such as the Lebesgue Density Theorem and Steinhaus' Theorem. 

The second equation has been investigated by many authors, 
such as Balogh, Ibrogimov and Mityagin \cite{BIM16}, moreover 
Łukasik \cite{Luk18}. In their works (higher order) 
differentiability is supposed. 
A significant reduction of regularity assumptions 
is due to the first author and Páles \cite{KP19}, who 
solved the equation assuming merely continuity for $ \varphi $. 

Combining these with our new results concerning the first equation, 
we are able to characterize the solutions of 
\eqref{eq-auxeqs} under reasonably mild regularity. 
We have to emphasize, that in the paper \cite{Kis22} the 
solutions of the separate equations of 
\eqref{eq-auxeqs} are presented, 
but in the setting where each functions in 
\eqref{eq-Invariance-eq} are continuously differentiable. 
Our main advancement in Section \ref{NA} 
of our paper is that 
we are able to deduce the same 
solutions of \eqref{eq-auxeqs} as in \cite{Kis22}, 
supposing merely differentiability for 
$ F, f_k \,, G, g_k $ along with $ g'_1 \cdot g'_2 > 0 $. 
This also implies that in the case when $ F $ is nowhere 
affine then our reduction of the imposed regularity 
conditions does not result in any new solutions. 
In particular, as we are going to establish, if 
$ F $ is nowhere affine then the solutions 
$ \left( F, f_k \,, G, g_k \right) $ 
consist of infinitely many times differentiable 
functions described in \cite{Kis22}.

\subsection{Notations}
Throughout the paper, $I$ stands for a non-empty open interval. For any $a,b\in\RR$ with $ a < b $, the closed and open intervals will be denoted by $\left[a,b\right]$ and $\left]a,b\right[\,$, respectively. 
Accordingly, we will use, for instance, notations such as 
\Eq{*}{\left[ a,b \right[ \, := 
\lbrace t \in \RR : a \leq t < b \rbrace\qquad\text{or}\qquad
\left] a,b \right] \, := 
\lbrace t \in \RR : a < t \leq b \rbrace .} 
For arbitrary sets $ H_1 $ and $ H_2 $ the notation 
$ H_1 \subset H_2 $ means that $ H_1 $ is a subset of 
$ H_2 $ but $ H_1 \neq H_2 \,$.  

Let $A:\RR\to\RR$ be an additive function and $b\in\RR$. We note that by an additive function we mean a solution to the Cauchy Functional Equation.
 Then, for a given function $h:I\to\RR$, we shall write $U\in\mathscr{A}_{h}(A,b)$ if and only if $U$ is a nonempty open subinterval of $I$ and $h(x)=A(x)+b$ whenever $x\in U$. If $h$ is continuous, then, for $A,b\in\RR$, instead of $\mathscr{A}_{h}(A\cdot\mathrm{id},b)$ we simply write $\mathscr{A}_{h}(A,b)$. In this case we also use the function's formula differently, namely, we write $h(x)=Ax+b$ instead of $h(x)=A(x)+b$. The context will always make it clear whether $A$ stands for an additive function or a real number.

Beyond all this, we will write that $U\in\mathscr{A}_{h}^{\max}(A,b)$ if $U\in\mathscr{A}_{h}(A,b)$ and there is no interval $V\in\mathscr{A}_{h}(A,b)$ for which $U\subseteq V$ holds with $U\neq V$. Note that, for any $U_0\in\mathscr{A}_h(A,b)$ there uniquely exists $U\in\mathscr{A}_h^{\max}(A,b)$ such that $U_0\subseteq U$, namely
\Eq{*}{
	U:=\bigcup\{V\in\mathscr{A}_h(A,b)\colon U_0\subseteq V\}.
}

Finally, in certain cases, we would like to indicate that a given function is affine on a specific interval $U$, without needing its explicit representation (i.e., its linear and constant part). In such cases, we simply write $U\in\mathscr{A}_h$ or $U\in\mathscr{A}_h^{\max}$. 

With these notations we may concisely summarize the 
three main part of our paper. 
In Section \ref{A} we deal with the case 
$\mathscr{A}_F^{\max}=\{I\}$, in Section \ref{PA} 
we assume $\mathscr{A}_F \neq \emptyset $ 
but $I\notin\mathscr{A}_F \,$, while 
in Section \ref{NA} we have $ \mathscr{A}_F = \emptyset $. 

\section{Solutions, where $F$ affine on its entire domain}\label{A}

The equation is quite simple to solve in the case when we know that the function $F$ is affine over the entire domain. In this case, the functions $f_1$ and $f_2$ can be expressed in terms of $F$ and of the functions $g_1$ and $g_2$, respectively, and moreover, $G$ also turns out to be affine on its domain. Later on, we will also deal with the case when $F$ is only partially affine. For this, we will need some results from the current section. Therefore, some results will be formulated only for subintervals.

\begin{prop}\label{Aff}
Assume that $F:I\to\RR$ is affine on some nonempty open subinterval $J\subseteq I$ and let $g_1,g_2:I\to\RR$ be continuous and strictly monotone. Then $(F,f_k,G,g_k)$ solves equation \eqref{eq-Invariance-eq} over $J$ if and only if there exist an additive function $B:\RR\to\RR$ and constants $\beta_1,\beta_2\in\RR$ such that
\Eq{*}{
f_k=-\tfrac12F+B\circ g_k+\beta_k
\qquad\text{and}\qquad
G=B+\beta}
hold on $J$ and on $g_1(J)+g_2(J)$, respectively, where $\beta=\beta_1+\beta_2$.
\end{prop}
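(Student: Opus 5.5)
The sufficiency direction is a direct substitution, so I will start with it. Write $F(t)=at+c$ on $J$ and suppose $f_k=-\tfrac12F+B\circ g_k+\beta_k$ on $J$ and $G=B+\beta$ on $g_1(J)+g_2(J)$. For $x,y\in J$ we have $\frac{x+y}2\in J$, and affinity of $F$ on $J$ gives
\[
F\big(\tfrac{x+y}2\big)=\tfrac12F(x)+\tfrac12F(y).
\]
Hence the left-hand side of \eqref{eq-Invariance-eq} equals $B(g_1(x))+B(g_2(y))+\beta_1+\beta_2$. By additivity of $B$ this is $B(g_1(x)+g_2(y))+\beta=G(g_1(x)+g_2(y))$. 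No regularity of $B$ is needed here.

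For necessity, I would first use the same identity for $F$ on $J$ to rewrite the equation as
\[
h_1(x)+h_2(y)=G\big(g_1(x)+g_2(y)\big),\qquad x,y\in J,
\]
where $h_k:=f_k+\tfrac12F$. Next set $u=g_1(x)$ and $v=g_2(y)$. Since $g_k$ is continuous and strictly monotone, $U_k:=g_k(J)$ is a nonempty open interval and $g_k|_J$ has an inverse. With $\phi_k:=h_k\circ(g_k|_J)^{-1}:U_k\to\RR$ this becomes the Pexider equation
\[
G(u+v)=\phi_1(u)+\phi_2(v),\qquad u\in U_1,\ v\in U_2,
\]
with $G$ considered on the open set $U_1+U_2$. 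This is exactly the setting of the Radó--Baker theorem \cite{RB87}, which is the tool announced for this section. That theorem yields an additive $B:\RR\to\RR$ and constants $\beta_1,\beta_2$ with
\[
\phi_k=B+\beta_k\ \text{on}\ U_k,\qquad G=B+\beta_1+\beta_2\ \text{on}\ U_1+U_2.
\]
Translating back through $g_k$ gives $f_k=-\tfrac12F+B\circ g_k+\beta_k$ on $J$ and $G=B+\beta$ on $g_1(J)+g_2(J)$.

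The main obstacle is justifying the Pexider step on a restricted domain. If I cannot cite \cite{RB87} in exactly this form, I would prove it by hand as follows. Fix $u_0\in U_1$ and $v_0\in U_2$. Subtracting the equations at $(u,v)$, $(u_0,v)$, $(u,v_0)$ and $(u_0,v_0)$ shows that $H(w):=G(w)-G(u_0+v_0)$ satisfies
\[
H(s+t)=H(s)+H(t)
\]
for $s,t$ in small neighbourhoods of $0$, after shifting by $u_0+v_0$. This is a local Cauchy equation on an interval around $0$. By the classical extension theorem (Daróczy--Losonczi / Aczél), $H$ extends uniquely to an additive $B$ on $\RR$. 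The expressions for $\phi_k$ and the constants then follow by substituting back, and connectedness of $U_1+U_2$ makes $G=B+\beta$ valid on the whole of $g_1(J)+g_2(J)$, not just near $u_0+v_0$.
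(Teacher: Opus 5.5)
Your proposal is correct and is essentially the paper's proof. Your $h_k=f_k+\tfrac12F$ is the paper's $\tfrac12\ell_k$, and both arguments eliminate the $F$-terms via the Jensen identity on $J$, substitute $u=g_1(x)$, $v=g_2(y)$ using the invertibility of $g_k$, and apply the Rad\'o--Baker theorem on the open connected rectangle $g_1(J)\times g_2(J)$; your hand-made local-Cauchy fallback is a valid, if sketched, substitute for that citation.
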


\begin{proof}
The sufficiency can be verified with straightforward calculations, so we will only prove the necessity. For $k=1,2$, define
\Eq{lk}{\ell_k(x):=F(x)+2f_k(x),\qquad x\in J.}
Expressing the functions $f_1$ and $f_2$ from here and substituting them into our equation, it takes the form
\Eq{*}{
F\Big(\frac{x+y}2\Big)-\frac{F(x)+F(y)}2=G\big(g_1(x)+g_2(y)\big)-\frac{\ell_1(x)+\ell_2(y)}2,\qquad x,y\in J.
}
Utilizing the fact that $F$ is affine on $J$ and that $g_1$ and $g_2$ are invertible functions, the above equation can be rewritten as
\Eq{*}{G(u+v)=\tfrac12\ell_1\circ g_1^{-1}(u)+\tfrac12\ell_2\circ g_2^{-1}(v),\qquad (u,v)\in g_1(J)\times g_2(J).}
The functions $g_1$ and $g_2$ are continuous and strictly monotone, hence $U:=g_1(J)\times g_2(J)$ is an open and connected subset of $\RR\times\RR$. Then, in view of Theorem 1. of \cite[Radó--Baker]{RB87}, there exist an additive function $B:\RR\to\RR$ and constants $\beta_1,\beta_2\in\RR$ such that
\Eq{*}{
\tfrac12\ell_k\circ g_k^{-1}= B+\beta_k
\qquad\text{and}\qquad
G=B+\beta
}
hold on $g_k(J)$ and on $g_1(J)+g_2(J)$, respectively, with $\beta:=\beta_1+\beta_2$. Expressing $\ell_k$ and then using its definition \eq{lk}, we obtain the desired decomposition of $f_1$ and $f_2$ over $J$.
\end{proof}

\begin{thm}\label{Main1}
Let $F:I\to\RR$ be affine and $g_1,g_2:I\to\RR$ be continuous and strictly monotone. Then $(F,f_k,G,g_k)$ solves functional equation \eqref{eq-Invariance-eq} if and only if there exist an additive function $B:\RR\to\RR$ and constants $\beta_1,\beta_2\in\RR$ such that
\Eq{*}{
	f_k=-\tfrac12F+B\circ g_k+\beta_k
	\qquad\text{and}\qquad
	G=B+\beta}
hold on $I$ and on $g_1(I)+g_2(I)$, respectively, where $\beta=\beta_1+\beta_2$.
\end{thm}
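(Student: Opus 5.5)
This is the special case $J=I$ of Proposition~\ref{Aff}, so the plan is simply to apply that proposition with $J:=I$. Since $F$ is affine on all of $I$, it is in particular affine on the nonempty open subinterval $J=I$. Moreover, $g_1,g_2$ are continuous and strictly monotone by hypothesis. Hence all assumptions of Proposition~\ref{Aff} are satisfied.

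For the sufficiency direction, assume the decomposition holds on $I$ and on $g_1(I)+g_2(I)$. Substituting $f_k=-\tfrac12F+B\circ g_k+\beta_k$ and $G=B+\beta$ into \eqref{eq-Invariance-eq} reduces the equation to
\[
F\Big(\tfrac{x+y}2\Big)-\tfrac{F(x)+F(y)}2=0 .
\]
The right-hand side matches the left by additivity of $B$, since $B(g_1(x)+g_2(y))=B(g_1(x))+B(g_2(y))$. The displayed identity holds because $F$ is affine.

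For the necessity direction, assume $(F,f_k,G,g_k)$ solves \eqref{eq-Invariance-eq} for all $x,y\in I$, that is, on $J=I$. Proposition~\ref{Aff} then yields an additive $B$ and constants $\beta_1,\beta_2$ such that the stated formulas hold on $J=I$ and on $g_1(J)+g_2(J)=g_1(I)+g_2(I)$. This is exactly the claim.

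There is no real obstacle here. The only point to check is that the domain of $G$ in the theorem, namely $g_1(I)+g_2(I)$, coincides with the set on which Proposition~\ref{Aff} gives the representation of $G$ when $J=I$, and it does.
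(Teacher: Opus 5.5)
Your proposal is correct and matches the paper. The paper gives no separate proof of Theorem~\ref{Main1}: it is the special case $J=I$ of Proposition~\ref{Aff}, and that is exactly the specialization you carry out, including the direct check of sufficiency.
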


\begin{remark}\label{freedom}
	Note that Proposition \ref{Aff} states that whenever $F$ is affine on an interval, the functions $g_1$ and $g_2$ can be chosen freely. This theorem has a counterpart, which asserts that if $g_1$ and $g_2$ are affine functions over an interval, then $F$ can be chosen arbitrarily, provided that $g_1$ and $g_2$ share a common invertible additive part. We now restate this theorem, originally found in \cite{Kis22}, along with its proof.
\end{remark}

\begin{prop}\label{gkc}
Let $J\in\mathscr{A}_{g_1}(D,\delta_1)\cap\mathscr{A}_{g_2}(D,\delta_2)$, where $D:\RR\to\RR$ is an invertible additive function. If $(F,f_k,G,g_k)$ is a solution of equation \eqref{eq-Invariance-eq}, then there exist an additive function $C:\RR\to\RR$ and constants $\gamma_1,\gamma_2\in\RR$ such that
\Eq{*}{
f_k(x)=C(x)+\gamma_k
\qquad\text{and}\qquad
G(u)=F\circ\frac12 D^{-1}(u-\delta)+C\circ D^{-1}(u-\delta)+\gamma
}
hold for all $x\in J$ and $u\in g_1(J)+g_2(J)$, where $\delta=\delta_1+\delta_2$ and $\gamma=\gamma_1+\gamma_2$.
\end{prop}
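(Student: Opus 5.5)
On $J$ we have $g_k(x)=D(x)+\delta_k$. Since $D$ is additive, $g_1(x)+g_2(y)=D(x+y)+\delta$. Substituting this into \eqref{eq-Invariance-eq} turns it into
\Eq{*}{
F\Big(\frac{x+y}2\Big)+f_1(x)+f_2(y)=G\big(D(x+y)+\delta\big),\qquad x,y\in J.
}
The plan is to reduce this to a Pexider equation of Jensen type. Define $H(t):=G\big(D(t)+\delta\big)-F\big(\tfrac t2\big)$ for $t\in J+J$. The equation then reads
\Eq{*}{
f_1(x)+f_2(y)=H(x+y),\qquad x,y\in J.
}
The set $J\times J$ is open and connected, so Theorem 1 of \cite[Radó--Baker]{RB87} applies, exactly as in the proof of Proposition \ref{Aff}. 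It gives an additive function $C:\RR\to\RR$ and constants $\gamma_1,\gamma_2$ such that $f_k=C+\gamma_k$ on $J$ and $H=C+\gamma$ on $J+J$, where $\gamma=\gamma_1+\gamma_2$.

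It remains to recover $G$. Take $u\in g_1(J)+g_2(J)$, say $u=g_1(x)+g_2(y)=D(x+y)+\delta$ with $x,y\in J$. Since $D$ is invertible, $t:=x+y=D^{-1}(u-\delta)$, and this $t$ lies in $J+J$. Then
\Eq{*}{
G(u)=H(t)+F\big(\tfrac t2\big)=F\big(\tfrac12D^{-1}(u-\delta)\big)+C\big(D^{-1}(u-\delta)\big)+\gamma,
}
which is the claimed formula. Note that $\tfrac t2\in J$ because $J$ is convex, so $F(\tfrac t2)$ is defined through the original equation.

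The step needing the most care is the Radó--Baker application. We must check that the domain is the full set $J\times J$ and that $H$ is well defined on $J+J$; the latter holds because $D^{-1}$ is single-valued. No continuity or monotonicity of $D$ is needed, only its invertibility and additivity.
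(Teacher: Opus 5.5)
Your proof is correct and follows the paper's approach: as in the proof of Proposition \ref{Aff}, you absorb the $F$-term into a function of $x+y$, apply the Rad\'o--Baker theorem on the open connected set $J\times J$, and use the invertibility of $D$ only to solve $u=D(x+y)+\delta$ for $x+y$. One small correction to your closing remark: $H$ is well defined on $J+J$ simply because it is given directly as a function of $t$ (with $D(t)+\delta\in g_1(J)+g_2(J)$ and $\tfrac t2\in J$), so the invertibility of $D$ plays no role there, only in the final step.
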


In the next subsection, we examine this relationship in greater depth. For a given subset $H\subseteq I$, define
\Eq{H-+}{
H^-:=\{x\in I :  x<\inf H\}\qquad\text{and}\qquad
H^+:=\{x\in I :  \sup H<x\}.
}

Obviously, if $H$ is empty, then $H^-=H^+=I$. If $\inf H=\inf I$ or $\sup H=\sup I$, then $H^-=\emptyset$ or $H^+=\emptyset$, respectively. Particularly, if $H=I$, then both of $H^-$ and $H^+$ are empty.

\begin{prop}\label{Gmax}
Let $(F,f_k,G,g_k)$ be a solution of equation \eqref{eq-Invariance-eq} such that $F$ is differentiable, $g_1$ and $g_2$ are continuous and strictly monotone and $U\in\mathscr{A}_F^{\max}$. If $\xi\in I\setminus U$ is a point with the property that there exist additive functions $B,C:\RR\to\RR$ and constants $\gamma,\beta\in\RR$ such that
\Eq{*}{
f_k=B\circ g_k+C+\gamma\qquad\text{and}\qquad
G=B+\beta
}
hold on $J$ and on $g_1(J)+g_2(J)$ with $J:=\conv\big(U\cup\{\xi\}\big)$, then $\xi\in\{\inf U, \sup U\}$.
\end{prop}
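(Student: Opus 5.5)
The plan is to substitute the assumed representations of $f_k$ and $G$ into \eqref{eq-Invariance-eq} on $J$, observe that the $B$-terms cancel, and conclude that $F$ is Jensen-convex-and-concave (i.e.\ satisfies the Jensen equation) on $J$. Continuity of $F$ will then make $F$ affine on the interior of $J$, and the maximality of $U$ will force $\xi$ to be an endpoint of $U$.

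\emph{Substitution.} For $x,y\in J$ we have $g_1(x)+g_2(y)\in g_1(J)+g_2(J)$. By additivity of $B$,
\[
G\big(g_1(x)+g_2(y)\big)=B\big(g_1(x)\big)+B\big(g_2(y)\big)+\beta .
\]
Substituting $f_k=B\circ g_k+C+\gamma$ into \eqref{eq-Invariance-eq}, the terms $B\circ g_1(x)$ and $B\circ g_2(y)$ cancel, and we obtain
\[
F\Big(\frac{x+y}2\Big)=-C(x)-C(y)+\beta-2\gamma,\qquad x,y\in J .
\]
(If the constants $\gamma$ are allowed to depend on $k$, only the constant on the right-hand side changes.) Setting $y=x$ gives $F(x)=-2C(x)+\beta-2\gamma$ on $J$. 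Comparing the two identities yields
\[
F\Big(\frac{x+y}2\Big)=\frac{F(x)+F(y)}2,\qquad x,y\in J .
\]

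\emph{Regularity.} Since $F$ is differentiable, it is continuous. A continuous solution of the Jensen equation on an interval is affine, so $F$ is affine on $J$, and in particular on $\operatorname{int}J$. Note that $\operatorname{int}J\supseteq U$ because $U$ is open and $U\subseteq J$.

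\emph{Maximality argument.} Suppose $\xi\notin\{\inf U,\sup U\}$. Since $\xi\in I\setminus U$ and $U$ is an open interval, either $\xi<\inf U$ or $\xi>\sup U$. Say $\xi>\sup U$; the other case is symmetric. Then $\operatorname{int}J=\left]\inf U,\xi\right[$ is a nonempty open subinterval of $I$ on which $F$ is affine, so $\operatorname{int}J\in\mathscr{A}_F$. Moreover it contains $U$ properly, since it contains the point $\sup U\notin U$. This contradicts $U\in\mathscr{A}_F^{\max}$, and therefore $\xi\in\{\inf U,\sup U\}$.

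The only delicate point is that $J$ need not be open, since it may contain $\xi$ as an endpoint. This is why the argument passes to $\operatorname{int}J$ and uses that it still strictly enlarges $U$ whenever $\xi$ is not an endpoint of $U$. Everything else is a direct computation.
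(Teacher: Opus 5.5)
Your proof is correct and follows essentially the same argument as the paper: substitute the representations, cancel the $B$-terms, set $x=y$ to get $F=-2C+\beta-2\gamma$, conclude that $F$ is affine, and contradict the maximality of $U$. The one difference is that the paper works only on $U^-\cap J$ and then uses differentiability of $F$ at $\inf U$ to glue that piece to $U$, whereas you use the identity on all of $J$ at once, which removes the gluing step and shows that continuity of $F$ already suffices.
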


\begin{proof}
Indirectly, assume that say $\xi<\inf U$ holds. Then the interval $U^-\cap J$ is not empty. Let $x,y\in U^-\cap J$ be any points. Then, the left hand side of equation \eqref{eq-Invariance-eq} can be written as
\Eq{*}{
F\Big(\frac{x+y}2\Big)+f_1(x)+f_2(y)=F\Big(\frac{x+y}2\Big)+B\big(g_1(x)+g_2(y)\big)+C(x+y)+2\gamma,
}
while the right hand side is of the form
\Eq{*}{
G\big(g_1(x)+g_2(y)\big)=B\big(g_1(x)+g_2(y)\big)+\beta.
}
Simplifying with $B\big(g_1(x)+g_2(y)\big)$ and then putting $x=y$, we obtain that $F(x)=-2C(x)-2\gamma+\beta$, that is, $F$ is affine on $U^-\cap J$. By the differentiability of $F$ we obtain that there exist $A,b\in\RR$ such that $F(x)=Ax+b$ whenever $x\in J$, contradicting the maximality of $U$. This shows that $\xi$ cannot be smaller than $\inf U$. A similar argument yields that $\xi$ cannot be bigger than $\sup U$. Thus the statement follows.
\end{proof}

\section{Solutions, where $F$ is partially affine}\label{PA}

In this section we are going to assume that $F$ is partially affine on its domain. We note that we will continue to rely heavily on the results of Section \ref{A}, namely, we will use the fact that whenever $F$ is affine on a subinterval, the exact form of the functions $f_1$, $f_2$, and $G$ is known there. Furthermore, $g_1$ and $g_2$ can be chosen arbitrarily over such subsets. In the following, we first derive an auxiliary functional equation.

\subsection{The Auxiliary Equation}\label{PA1} The goal of the first part of the current section is to reduce the number of unknown functions in equation \eqref{eq-Invariance-eq}. This will be done in two steps. First, by utilizing the \emph{continuity} and \emph{strict monotonicity} of the functions $g_1$ and $g_2$, we eliminate the functions $f_1$ and $f_2$. In the second step, assuming \emph{differentiability}, we derive an auxiliary functional equation that involves only the functions $g_1$, $g_2$, and $G$, to be more precise, their derivatives and inverses. With these, the number of unknown functions is reduced by half. It is worth noting that our results do not require the continuity of the derivatives.

In order to eliminate the functions $f_1$ and $f_2$, we will need the following sets and some of their properties. The construction itself and the related lemma are not new; they can be found in the paper \cite{Kis22}. However, for easier readability, we will formulate them here as well. The proof, on the other hand, will be omitted. Interested readers can find it in the mentioned paper \cite{Kis22}.

For the functions $g_1,g_2:I\to\RR$, a non-empty subset $H\subseteq I$, and a point $x\in I$, define the restricted preimages
\Eq{*}{
H_k(x):=g_{3-k}^{-1}\big(g_1(H)+g_2(H)-g_k(x)\big)\cap H,\qquad k=1,2.}

Notice that there is a very simple sufficient condition to ensure that none of the sets are empty, namely we have $H_1(x)=H_2(x)=H$, whenever $x\in H$. According to the following lemma, the point $x$ can be chosen as the infimum (or the supremum) of the set $H$, and it is even possible to go slightly beyond this point in such a way that the sets in question remain non-empty, provided that, of course, the infimum (or the supremum) of $H$ is contained in $I$.

\begin{lemm}\label{L6}
Let $g_1$ and $g_2$ be continuous, strictly monotone in the same sense, $H\subseteq I$ be a nonempty open subinterval and set $a:=\inf H$ and $b:=\sup H$. Then, for $k=1,2$, the following statements hold.
\begin{enumerate}\itemsep=1mm
\item\label{L61} If $a\in I$, then $H\subseteq H_k(a)$. If $b\in I$, then $H\subseteq H_k(b)$.
\item\label{L62} If $a\in I$, then there exists $x\in I$ with $x<a$ such that $H_k(x)$ is nonempty. If $b\in I$, then there exists $x\in I$ with $b<x$ such that $H_k(x)$ is nonempty.
\item\label{L63} If $x\in I$ with $x<a$ such that $H_k(x)$ is nonempty, then, for all $u\in[x,a]$, we have 
$H_k(x)\subseteq H_k(u)$. If $x\in I$ with $b<x$ such that $H_k(x)$ is nonempty, then, for all $u\in[b,x]$, we have 
$H_k(x)\subseteq H_k(u)$.
\item\label{L64} If $x\in I$ with $x<a$, then $a<\inf H_k(x)$. In addition, if $H_k(x)$ is not empty, then $\sup 
H_k(x)=b$. If $x\in I$ with $b<x$, then $\sup H_k(x)<b$. In addition, if $H_k(x)$ is not empty, then $\inf H_k(x)=a$.
\end{enumerate}\end{lemm}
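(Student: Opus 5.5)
Without loss of generality we assume that $g_1$ and $g_2$ are both strictly increasing. The decreasing case follows by replacing $g_k$ with $-g_k$: this replaces $g_1(H)+g_2(H)$ by its negative and leaves every set $H_k(x)$ unchanged. Since $g_1,g_2$ are continuous and $H=\,]a,b[$ is open, the set $S:=g_1(H)+g_2(H)=\,]g_1(a^+)+g_2(a^+),\,g_1(b^-)+g_2(b^-)[$ is an open interval. Here $g_k(a^+)$ denotes the limit at $a$, which equals $g_k(a)$ when $a\in I$. We work with the equivalent description
\[
y\in H_k(x)\iff y\in H\ \text{ and }\ g_k(x)+g_{3-k}(y)\in S .
\]
This reduces every item to comparing the monotone function $y\mapsto g_k(x)+g_{3-k}(y)$ with the endpoints of $S$.

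For \eqref{L61}, take $a\in I$ and $y\in H$. Then $g_k(a)+g_{3-k}(y)>g_k(a)+g_{3-k}(a)=\inf S$, using $y>a$ and strict monotonicity. Also $g_k(a)+g_{3-k}(y)<g_k(y')+g_{3-k}(y)$ for any $y'\in H$ with $y'>a$. Choosing $y'$ close to $b$, the latter quantity lies in $S$, so $g_k(a)+g_{3-k}(y)\in S$ and hence $y\in H_k(a)$. The case $b\in I$ is symmetric.

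For \eqref{L62}, fix $y_0\in H$. By \eqref{L61}, $g_k(a)+g_{3-k}(y_0)$ lies in the open set $S$. Continuity of $g_k$ at $a$, together with openness of $I$, then gives some $x<a$ in $I$ with $g_k(x)+g_{3-k}(y_0)\in S$, i.e.\ $y_0\in H_k(x)$.

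For \eqref{L63}, take $x<a$ and $u\in[x,a]$. For $y\in H_k(x)$ we have
\[
\inf S< g_k(x)+g_{3-k}(y)\le g_k(u)+g_{3-k}(y)\le g_k(a)+g_{3-k}(y)<\sup S,
\]
where the last inequality comes from \eqref{L61}. Hence $y\in H_k(u)$.

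For \eqref{L64}, take $x<a$. Then $H_k(x)=\{y\in H: g_{3-k}(y)>\inf S-g_k(x)\}$, because the upper constraint is automatic by the argument of \eqref{L63}. This set is an upper subinterval of $H$, so its supremum is $b$ whenever it is nonempty. Its infimum $c$ satisfies $g_{3-k}(c)\ge \inf S-g_k(x)=g_{3-k}(a)+\big(g_k(a)-g_k(x)\big)>g_{3-k}(a)$, which forces $c>a$. If $H_k(x)$ is empty, the convention $\inf\emptyset=+\infty$ gives the inequality trivially. The statements for $b<x$ are symmetric.

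The main subtlety is making the endpoint bookkeeping rigorous when $a$ or $b$ lies at the boundary of $I$, so that one of the one-sided limits of $g_k$ may be infinite. This is handled by using only the open-interval description of $S$ and the strict inequalities above, which never require finiteness of the opposite endpoint.
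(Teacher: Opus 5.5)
Your proof is correct. The paper does not prove this lemma itself; it refers to Lemma 6 of \cite{Kis22}, so there is no in-paper argument to compare against. Your argument is complete and self-contained. It has three ingredients: the reduction to increasing $g_k$ (which leaves every $H_k(x)$ unchanged), the membership criterion $y\in H_k(x)\iff y\in H$ and $g_k(x)+g_{3-k}(y)\in S$, and the fact that $S=g_1(H)+g_2(H)$ is an open interval with $\inf S=g_1(a)+g_2(a)$ when $a\in I$.

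Three small points should be stated explicitly.
\begin{itemize}
\item In items (3) and (4) you apply item (1) at the point $a$ and use the value $g_k(a)$. This is legitimate because $x\in I$ with $x<a$ forces $a\in I$: $a$ lies between $x$ and the points of $H\subseteq I$, and $I$ is an interval. You should say so.
\item In item (4), the inequality $g_{3-k}(c)\ge \inf S-g_k(x)$ uses continuity of $g_{3-k}$ at $c=\inf H_k(x)$. This is available because $c\in[a,b[\,\subseteq I$ whenever $H_k(x)\neq\emptyset$.
\item In item (1) you do not need to take $y'$ close to $b$: for every $y'\in H$ the number $g_k(y')+g_{3-k}(y)$ already belongs to $S$.
\end{itemize}
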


\begin{proof}
See the proof of \cite[Lemma 6]{Kis22}.
\end{proof}

We draw attention to the fact that, in view of the above lemma, for a given point $x\in I$, the sets $H_1(x)$ and $H_2(x)$ are either both empty or both non-empty. Now, keeping the conditions of Lemma \ref{L6}, define the set
\Eq{*}{
H_\mathrm{ext}:=\{x\in I :  \text{the intersection }H_1(x)\cap H_2(x)\text{ is not empty}\}.
}

Basically, $H_\mathrm{ext}$ consists of those points $x$ of $I$ for which there exist $u,v\in H$ such that both of $g_1(x)+g_2(u)$ and $g_1(v)+g_2(x)$ belong to $g_1(H)+g_2(H)$. By $H\subseteq H_\mathrm{ext}$, the set $H_\mathrm{ext}$ is not empty. Statement (\ref{L62}) of Lemma \ref{L6} states that the inclusion $H\subseteq H_\mathrm{ext}$ is strict whenever $H\neq I$. Finally, by (\ref{L63}) of Lemma \ref{L6}, $H_\mathrm{ext}$ is a subinterval of $I$, which is obviously open due to the conditions listed in Lemma \ref{L6}.

In summary, it can be stated that under the conditions of Lemma \ref{L6}, $H_\mathrm{ext}$ is a proper extension of the interval $H$ in $I$, provided that $H$ is not equal to $I$.

Beside the above set, we will also use the part of the reflection of the subinterval $H\subseteq I$ onto itself that lies within $I$. So let us define
\Eq{*}{
H_\mathrm{ref}:=(2H-H)\cap I.
}

Note that $H_\mathrm{ref}$ strictly contains $H$ if $H\neq I$ holds true.

Having the above lemma at our disposal, we can now, in a certain sense, extend the form of $f_1$ and $f_2$ given in Proposition \ref{Aff} to intervals on which $F$ is affine only on a subinterval.

\begin{prop}\label{ext}
Let $(F,f_k,G,g_k)$ be a solution of equation \eqref{eq-Invariance-eq} such that $g_1,g_2:I\to\RR$ are continuous and strictly monotone in the same sense. Let further $U\in\mathscr{A}_F(A,b)$. Then there exist an additive function $B:\RR\to\RR$ and constants $\beta_1,\beta_2\in\RR$ such that, for $k=1,2$, we have
\Eq{fk+}{
f_k(x)=-\frac12\big(A(x)+b\big)+B\circ g_k(x)+\beta_k,\qquad x\in U_\mathrm{ext}\cap U_\mathrm{ref}.}
\end{prop}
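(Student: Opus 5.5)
First I apply Proposition \ref{Aff} with $J:=U$. Since $F(x)=A(x)+b$ on $U$, it yields an additive $B$ and constants $\beta_1,\beta_2$ with $f_k=-\tfrac12(A+b)+B\circ g_k+\beta_k$ on $U$ and $G=B+\beta$ on $g_1(U)+g_2(U)$, where $\beta=\beta_1+\beta_2$. What remains is to extend the formula for $f_k$ from $U$ to the points $x\in U_\mathrm{ext}\cap U_\mathrm{ref}$. I will use $H:=U$ in the construction of the restricted preimages, so that Lemma \ref{L6} and the definition of $U_\mathrm{ext}$ are available.

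Fix $x\in U_\mathrm{ext}\cap U_\mathrm{ref}$ and take $k=1$; the case $k=2$ is symmetric. Because $x\in U_\mathrm{ext}$, the set $U_1(x)$ is nonempty, so there is $y\in U$ with $g_1(x)+g_2(y)\in g_1(U)+g_2(U)$, and there $G$ is known. I want to choose $y$ so that, in addition, $\tfrac{x+y}2\in U$, since then $F$ is known at the midpoint as well. Granting this, equation \eqref{eq-Invariance-eq} at $(x,y)$ gives
\[
f_1(x)=B\big(g_1(x)+g_2(y)\big)+\beta-\tfrac12\big(A(x+y)+2b\big)-\Big(-\tfrac12(A(y)+b)+B(g_2(y))+\beta_2\Big),
\]
which simplifies, by additivity of $A$ and $B$, to $-\tfrac12(A(x)+b)+B(g_1(x))+\beta_1$. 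This is the claimed formula.

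The key step is therefore to find $y\in U_1(x)$ with $\tfrac{x+y}2\in U$. This is exactly where $x\in U_\mathrm{ref}$ is needed: it gives $x=2u-v$ with $u,v\in U$, and equivalently the set $\{y\in U:\tfrac{x+y}2\in U\}=(2U-x)\cap U$ is a nonempty open interval. If $x\in U$, I simply take $y=x$. Otherwise suppose $x<a:=\inf U$; the case $x>\sup U$ is symmetric. In this case $(2U-x)\cap U$ is a subinterval of $U$ whose right endpoint is $\sup U$, since $2\sup U-x>\sup U$. By Lemma \ref{L6}(\ref{L64}), $U_1(x)$ is a nonempty interval, with $\sup U_1(x)=\sup U$, being the intersection of $U$ with the preimage of an open interval under a monotone continuous map. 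Two subintervals of $U$ that both reach up to $\sup U$ must intersect, so a suitable $y$ exists.

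I expect this intersection argument to be the main obstacle. In particular, it needs the exact endpoint information from Lemma \ref{L6}(\ref{L64}), and also care that $U_1(x)$ has nonempty interior near $\sup U$, which holds because the defining set is open.
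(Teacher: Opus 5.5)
Your proposal is correct and is essentially the paper's proof: apply Proposition~\ref{Aff} on $U$, then for $x<\inf U$ find $y\in U_1(x)$ with $\frac{x+y}2\in U$, and solve the equation at $(x,y)$ for $f_1(x)$. For the choice of $y$, the paper takes $y=\max(u,v)$ with $u\in U_1(x)\cap U_2(x)$ and $\frac{x+v}2\in U$, which is exactly your observation that two subintervals of $U$ reaching up to $\sup U$ must meet.

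One small point, which the paper also passes over: when $\inf U\in I$, the endpoint $x=\inf U$ lies in $U_\mathrm{ext}\cap U_\mathrm{ref}$ but in none of your cases. It is handled the same way via Lemma~\ref{L6}(\ref{L61}): there $U_1(x)=U$ and every $y\in U$ satisfies $\frac{x+y}2\in U$. The endpoint $\sup U$ is treated likewise.
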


\begin{proof} If we apply Proposition \ref{Aff} for $J:=U$, we obtain that there exist an additive function $B:\RR\to\RR$ and constants $\beta_1,\beta_2\in\RR$ such that, for $k=1,2$,
\Eq{*}{
f_k=-\frac12(A+b)+B\circ g_k+\beta_k\qquad\text{and}\qquad
G=B+\beta}
are valid on $U$ and on $g_1(U)+g_2(U)$, respectively, with $\beta=\beta_1+\beta_2$. In the sequel, we may and do assume that $\inf U\in I$.
	
Let $x\in U_\mathrm{ext}\cap U_\mathrm{ref}$ be any point. If $x\in U$, then the identities in \eq{fk+} are obviously valid, therefore assume that $x<\inf U$. Then $U_1(x)\cap U_2(x)$ is not empty, furthermore, there exists $v\in U$ such that $\frac12(x+v)\in U$. Let $u\in U_1(x)\cap U_2(x)$ be any point. Note that, by (\ref{L64}) of Lemma \ref{L6}, $[u,\sup U]\subseteq U_1(x)\cap U_2(x)$ follows. Then we assert that for $y:=\max(u,v)$ we have
\Eq{*}{
\frac{x+y}2\in U\qquad\text{and}\qquad y\in U_1(x)\cap U_2(x).
}

Indeed, if $u\leq v$, that is, if $y=v$, then the first inclusion above follows from the definition of $v$. The second inclusion is implied by $[u,\sup U]\subseteq U_1(x)\cap U_2(x)$. If $v<u$, then the second inclusion holds automatically, furthermore, since $U$ is an interval, the validity of the first inclusion is also clear.

Now, apply equation \eqref{eq-Invariance-eq} for the pair $(x,y)$. Using that $F$ is affine on $U$ and the inclusion $y\in U$, the left hand side of (\ref{eq-Invariance-eq}) can be written as
\Eq{*}{
F\Big(\frac{x+y}2\Big)+f_1(x)+f_2(y)
&=\frac12\big(A(x)+A(y)\big)+b+f_1(x)-\frac12F(y)+B\circ g_2(y)+\beta_2\\
&=\frac12A(x)+\frac12b+f_1(x)+B\circ g_2(y)+\beta_2.
}
We can therefore make this latter expression equal to the right-hand side of equation \eqref{eq-Invariance-eq}, namely
\Eq{*}{
\frac12A(x)+\frac12b+f_1(x)+B\circ g_2(y)+\beta_2=G\big(g_1(x)+g_2(y)\big)=B\circ g_1(x)+B\circ g_2(y)+\beta,
}
where in the last step we used the fact that $y\in U_1(x)$. Simplifying the above equality by the term $B\circ g_2(y)$ and using that $\beta_1=\beta-\beta_2$, for the value $f_1(x)$ we get
\Eq{*}{
f_1(x)=-\frac12\big(A(x)+b\big)+B\circ g_1(x)+\beta_1.
}
If $\sup U\in I$, a similar calculation leads to the same result for $x>\sup U$. Consequently, formula \eq{fk+} holds on $U_\mathrm{ext}\cap U_\mathrm{ref}$ for $k=1$. To get the desired form of $f_2$ over $U_\mathrm{ext}\cap U_\mathrm{ref}$, apply equation \eqref{eq-Invariance-eq} for the pair $(y,x)$ and perform the same reasoning.
\end{proof}

We now formulate the auxiliary functional equation, which involves only four unknown functions. This equation will be valid only over intervals of the type $U_\mathrm{ext}\cap U_\mathrm{ref}$, where $U\in\mathscr{A}_F$.

\begin{coro}\label{Aux1}
Let $(F,f_k,G,g_k)$ be a solution of equation \eqref{eq-Invariance-eq} such that $g_1,g_2:I\to\RR$ are continuous and strictly monotone in the same sense and let $U\in\mathscr{A}_F(A,b)$. Then there exist an additive function $B:\RR\to\RR$ and a constant $\beta\in\RR$ such that
\Eq{G+}{
F\Big(\frac{x+y}2\Big)-\frac12\big(A(x)+A(y)\big)-b=G\big(g_1(x)+g_2(y)\big)-B\big(g_1(x)+g_2(y)\big)-\beta
}
holds for all $x,y\in U_\mathrm{ext}\cap U_\mathrm{ref}$.
\end{coro}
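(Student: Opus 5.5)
The plan is to substitute the representation of $f_1$ and $f_2$ from Proposition \ref{ext} into equation \eqref{eq-Invariance-eq}. That substitution leaves only a rearrangement. Specifically, I will apply Proposition \ref{ext} to $U\in\mathscr{A}_F(A,b)$. This yields an additive function $B:\RR\to\RR$ and constants $\beta_1,\beta_2\in\RR$ such that
\[
f_k(x)=-\tfrac12\big(A(x)+b\big)+B\circ g_k(x)+\beta_k,\qquad x\in U_\mathrm{ext}\cap U_\mathrm{ref},\ k=1,2.
\]
I will then set $\beta:=\beta_1+\beta_2$. This is the same $B$ and $\beta$ that Proposition \ref{Aff} provides on $U$, although that fact is not needed for the corollary.

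Next, for arbitrary $x,y\in U_\mathrm{ext}\cap U_\mathrm{ref}$, I will write equation \eqref{eq-Invariance-eq} at the pair $(x,y)$ and replace $f_1(x)$ and $f_2(y)$ by the formulas above. The left-hand side becomes
\[
F\Big(\frac{x+y}2\Big)-\tfrac12\big(A(x)+A(y)\big)-b+B\circ g_1(x)+B\circ g_2(y)+\beta.
\]
By additivity of $B$, the term $B\circ g_1(x)+B\circ g_2(y)$ equals $B\big(g_1(x)+g_2(y)\big)$. Moving this term and $\beta$ to the right-hand side gives exactly \eq{G+}.

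Two small points need checking. The first is that $g_1(x)+g_2(y)$ lies in the domain $g_1(I)+g_2(I)$ of $G$. This holds because $x,y\in I$. The second is that $\tfrac{x+y}{2}\in I$, which holds because $I$ is an interval. No further regularity is needed, since all the work was already done in Proposition \ref{ext}. I therefore expect no genuine obstacle: the only delicate step, namely extending the form of $f_k$ beyond $U$, is entirely contained in Proposition \ref{ext}.
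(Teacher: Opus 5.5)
Your proposal is correct and is the paper's own argument. The paper's proof is the one-line instruction to substitute \eq{fk+} from Proposition \ref{ext} into \eqref{eq-Invariance-eq} on $U_\mathrm{ext}\cap U_\mathrm{ref}$; you carry this out explicitly, merging $B\circ g_1(x)+B\circ g_2(y)$ by additivity, setting $\beta=\beta_1+\beta_2$, and rearranging.
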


\begin{proof}
Apply \eq{fk+} of Proposition \ref{ext} in the equation \eqref{eq-Invariance-eq} restricted to $U_\mathrm{ext}\cap U_\mathrm{ref}$.
\end{proof}

Let us note that both of \eq{fk+} of Proposition \ref{ext} and \eq{G+} of Corollary \ref{Aux1} trivially hold on $U\in\mathscr{A}_F(A,b)$. 

We have now reached the point where we can derive the Auxiliary Functional Equation that will play a central role in the subsequent calculations of this section. As mentioned earlier, differentiability becomes an essential condition at this stage. To make it easier to formulate our further results, from now on, let $U^*$ stand for the intersection $U_\mathrm{ext}\cap U_\mathrm{ref}$, whenever $U\in\mathscr{A}_F$. Clearly, $U^*$ contains such points from $U_\textrm{ext}$ that, when reflected across any element of $U$, still remain within $I$.

\begin{prop}\label{aux}
Let $(F,f_k,G,g_k)$ be a solution of equation \eqref{eq-Invariance-eq} such that each of its coordinate functions is differentiable on its domain, assume that $g_1,g_2:I\to\RR$ are strictly monotone in the same sense, and let $U\in\mathscr{A}_F$ with $U\neq I$. Then there exists a constant $B\in\RR$ such that
\Eq{aux}{
\varphi\Big(\frac{u+v}2\Big)\big(\psi_1(u)-\psi_2(v)\big)=0,\qquad (u,v)\in g_1(U^*)\times g_2(U^*)
}
holds true with
\Eq{phi}{
\varphi(t):=G'(2t)-B\qquad\text{and}\qquad
\psi_k(s):=g_k'\circ g_k^{-1}(s)}
for $t\in\tfrac12\big(g_1(U^*)+g_2(U^*)\big)$ and $s\in g_k(U^*)$, $k=1,2$.
\end{prop}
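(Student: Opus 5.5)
Apply Corollary \ref{Aux1} to $U\in\mathscr{A}_F(A,b)$: there are an additive $B$ and a constant $\beta$ such that \eq{G+} holds for all $x,y\in U^*$. Because $F$ and $G$ are differentiable, $B$ is continuous, hence linear, say $B(u)=Bu$. To see this, put $x=y$ into \eq{G+}: since $F(x)-A(x)-b=G(g_1(x)+g_2(x))-B(g_1(x)+g_2(x))-\beta$, and $F$ is differentiable, the function $A$ is bounded on some interval (take $x\in U$, where $F=A+b$). Hence $A$ is linear; I write $A(x)=Ax$. In \eq{G+}, restricted to $x,y\in U$, the left-hand side vanishes, so $G-B$ is constant on $g_1(U)+g_2(U)$. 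This set is a nondegenerate interval, so $B$ is bounded there and therefore linear. Thus, on $U^*$,
\[
F\Big(\tfrac{x+y}2\Big)-\tfrac A2(x+y)-b=G\big(g_1(x)+g_2(y)\big)-B\big(g_1(x)+g_2(y)\big)-\beta .
\]

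Next, I differentiate this identity with respect to $x$ and with respect to $y$, which uses only differentiability of $F$, $G$ and $g_k$ together with the chain rule. The left-hand side depends only on $x+y$, so both partial derivatives equal $\tfrac12\big(F'(\tfrac{x+y}2)-A\big)$. Writing $w:=g_1(x)+g_2(y)$ and subtracting the two expressions gives
\[
\big(G'(w)-B\big)\big(g_1'(x)-g_2'(y)\big)=0,\qquad x,y\in U^*.
\]
Substituting $u=g_1(x)$ and $v=g_2(y)$, which is legitimate because $g_k$ is a continuous, strictly monotone bijection from $U^*$ onto $g_k(U^*)$, we get $w=u+v=2\cdot\frac{u+v}2$ and $g_k'(g_k^{-1}(\cdot))=\psi_k$. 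This yields \eq{aux} with $\varphi$ and $\psi_k$ as in \eq{phi}. The domain of $\varphi$ is exactly $\frac12\big(g_1(U^*)+g_2(U^*)\big)$, so everything is well defined, and $G'$ exists there because $g_1(U^*)+g_2(U^*)\subseteq g_1(I)+g_2(I)$, which is open.

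The main obstacle is justifying that $A$ and $B$ are genuinely linear, so that the derivatives in the identity make sense. Differentiability of $F$ alone does not immediately force an additive $A$ to be continuous, because \eq{G+} involves $A(x)+A(y)$ rather than $F$ at $x$ and $y$. The cleanest route is the one above: $A$ coincides with $F-b$ on the open interval $U$, hence is continuous there, and $B$ coincides with $G-\beta$ on the open interval $g_1(U)+g_2(U)$. By the classical boundedness criterion, each is therefore linear. The hypothesis $U\neq I$ is not needed for the derivation itself; it only ensures that $U^*$ strictly enlarges $U$, so the statement is not vacuous there.
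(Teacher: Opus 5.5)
Your proof is correct and follows the paper's argument. You invoke Corollary \ref{Aux1}, differentiate the resulting identity in $x$ and in $y$, equate the identical left-hand sides, and substitute $u=g_1(x)$, $v=g_2(y)$. The one thing you add is an explicit check that the additive parts are linear, since $A=F-b$ on $U$ and $B=G-\beta$ on $g_1(U)+g_2(U)$ are continuous there; the paper takes this for granted when it writes $A$ and $B$ as real constants.
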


\begin{proof}
By Corollary \ref{Aux1}, there exist constants $A,B,b,\beta\in\RR$ such that
\Eq{*}{
F\Big(\frac{x+y}2\Big)-\frac12\big(Ax+Ay\big)-b=G\big(g_1(x)+g_2(y)\big)-Bg_1(x)-Bg_2(y)-\beta}
for all $x,y\in U^*$. Differentiate this equation with respect to $x$ and with respect to $y$, separately. Then
\Eq{*}{
\frac12F'\Big(\frac{x+y}2\Big)-\frac12A&=G'\big(g_1(x)+g_2(y)\big)g_1'(x)-Bg_1'(x),\\[1mm]
\frac12F'\Big(\frac{x+y}2\Big)-\frac12A&=G'\big(g_1(x)+g_2(y)\big)g_2'(y)-Bg_2'(y)
}
follows for $x,y\in U^*$. The left-hand sides are identical, so we immediately get that
\Eq{*}{
G'\big(g_1(x)+g_2(y)\big)g_1'(x)-Bg_1'(x)=G'\big(g_1(x)+g_2(y)\big)g_2'(y)-Bg_2'(y),\qquad x,y\in U^*.}
Putting $u:=g_1(x)$ and $v:=g_2(y)$, and applying the definitions in \eq{phi}, we obtain equation \eq{aux}.
\end{proof}

In 2002, Daróczy and Páles studied equation \eq{aux} under the assumption that $\psi_1$ and $\psi_2$ are equal to a function $\psi$, which is a sum of a continuous function and a derivative. Moreover, they assumed that $\varphi$ can be expressed as $\psi+\alpha$ for some constant $\alpha\in\RR$. 

To address the equality problem of quasi-arithmetic and Cauchy means, the first author and Páles in 2019 considered a special case of \eq{aux}, again assuming $\psi_1=\psi_2$ and a closed zero set for $\varphi$. We note that the condition concerning the zero set is satisfied if, for example, $\varphi$ is continuous, or strictly monotone, or merely injective. 

Keeping the condition on the zero set of $\varphi$, the solutions to the general equation \eq{aux} were described in \cite{Kis24} by the first author in 2024.  

Finally, the second author \cite{Tot} described the solutions for measurable functions $\varphi$ and obtaining a result analogous to \cite{Kis24} in the case where $\varphi$ is a derivative. The following theorem originally appeared in the paper \cite{Tot} of the second author. For the sake of simpler phrasing, in the following theorem and in the two subsequent corollaries let $I_1,I_2\subseteq\RR$ be nonempty open intervals and $S:=\frac12(I_1+I_2)$.

\begin{thm}\label{peter}
Let $\left( \varphi, \psi_1, \psi_2\right)$ be a solution of the functional equation
\Eq{pet}{
\varphi\Big(\frac{x+y}2\Big)\big(\psi_1(x)-\psi_2(y)\big)=0,\qquad (x,y)\in I_1\times I_2
}
such that $\varphi$ has a primitive function on $S$. Then exactly one of the following statements is true. 
\begin{enumerate}\itemsep=1mm
\item\label{P1} The function $\varphi$ is identically zero on $S$ or there exists a constant $D\in\RR$ such that $\psi_1(x)=\psi_2(y)=D$ for all $x\in I_1$ and $y \in I_2$.
\item\label{P2} There exist constants $D,E\in\RR$ and, for $i=1,2$, there exist extended real numbers $a_i, b_i\in\RR\cup\{-\infty,+\infty\}$ such that 
$ \inf I_i \leq a_i \leq b_i \leq \sup I_i $ and for the open intervals 
\[
U_i := \left]\,\inf I_i, a_i \,\right[ 
\qquad \mbox{ and } \qquad 
V_i := \left] \, b_i, \sup I_i \, \right[ 
\]
the following assertions are fulfilled: 
\begin{itemize}\itemsep=1mm
\item $ U_i \neq I_i $ and $ V_i \neq I_i $ for any index $ i = 1,2 \,$; 
\item $ U_1 \,, U_2 $ are both nonempty or 
$ V_1 \,, V_2 $ are both nonempty; 
\item  $ \psi_1 (x) =  \psi_2 (y) = D $ for all $ (x,y) \in U_1 \times U_2 $ and 
       $ \psi_1 (x) =  \psi_2 (y) = E $ for all $ (x,y) \in V_1 \times V_2 \,$; 
\item $\varphi$ is identically zero on $\frac{1}{2} \left( K_1 + I_2 \right) \cup 
			\frac{1}{2} \left( I_1 + K_2 \right) $ 
			where 
			$ K_i := I_i \setminus \left( U_i \cup V_i \right) $ 
			is a nonempty subinterval 
			which is closed in $ I_i $, $ i=1,2 $. 
		\end{itemize}
		
		\item\label{P3} There exist a constant $ D \in \RR $ 
		and an index 
		$ j \in \lbrace 1,2 \rbrace $ such that 
		$ \psi_j(z) = D $ for all $ z \in I_j \,$, 
		while there exists a finite or countable collection of 
		disjoint, nonempty open intervals 
		$ U_n  \subset I_i $ 
		($ i \neq j $ and $ n = 1, \dots, N $ 
		where $ N \in \NN $ or $ N = \infty $) 
		with the following properties: 
		\Eq{*}{
			\bigcup_{n=1}^N U_n \neq I_i\quad\text{ and }\quad\psi_i(z) = D\text{ for all } z\in \bigcup_{n=1}^N U_n,}
		moreover  
		\Eq{*}{
			\varphi(t) = 0\text{ for all } t \in \frac{1}{2} \Big(\Big( I_i \setminus \bigcup_{n=1}^N U_n\Big) + I_j\Big).  
		}
	\end{enumerate} 
Conversely, if for the functions $ \psi_1 : I_1 \to \RR $, $ \psi_2 : I_2 \to \RR $ and $ \varphi : S \to \RR $ one of the three cases above holds, then the triplet $ \left( \varphi \,, \psi_1 \,, \psi_2 \right) $ is a solution of \eq{aux}.
\end{thm}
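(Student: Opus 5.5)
The plan is to study the zero set $Z:=\{t\in S:\varphi(t)=0\}$ and its complement $P:=S\setminus Z$. Throughout, the only regularity used is that $\varphi$ is a derivative. Such a function has the Darboux property, and its zero set is a $G_\delta$ set, because $\varphi$ is of Baire class one. The guiding observation is simple. If $t\in P$ and $x\in I_1$, $y\in I_2$ with $\frac{x+y}2=t$, then equation \eq{pet} gives $\psi_1(x)=\psi_2(y)$.

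\emph{Step 1.} If $\varphi\equiv0$ we are in case \eqref{P1}, so assume $P\neq\emptyset$.

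\emph{Step 2 (local constancy).} For a fixed $t\in P$, the set of admissible $x$ is an open interval $L_t:=I_1\cap(2t-I_2)$. On it $\psi_1(x)=\psi_2(2t-x)$.

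Now take two points $t,t'\in P$ that are close to each other. Pairing $x$ with $2t-x$ and with $2t'-x$ shows that $\psi_2(y)=\psi_2(y+2(t'-t))$ on a subinterval. For this to give constancy we need many such $t'$, and here the Darboux property is used. A connected component of $\operatorname{int}\overline P$ is not obviously available, so I would first prove the following.

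\emph{Claim.} Every point of $P$ has a neighbourhood $N$ such that $P\cap N$ contains an interval.

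Let $t_0\in P$ with, say, $\varphi(t_0)>0$. Suppose every neighbourhood of $t_0$ contained zeros of $\varphi$. This is possible for a derivative, but the set $\{\varphi>\varphi(t_0)/2\}$ near $t_0$ still has positive density at $t_0$ (Denjoy property of derivatives). Hence $P$ has positive measure near $t_0$, and the Steinhaus theorem gives that $P-P$ contains an interval around $0$.

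With this, $\psi_2$ (and similarly $\psi_1$) is periodic with respect to a whole interval of small periods on the relevant window. Therefore $\psi_2$ is constant on $L'_t:=I_2\cap(2t-I_1)$ for $t$ in a neighbourhood, and $\psi_1$ is constant on $L_t$ with the same value.

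\emph{Step 3 (gluing).} Let $\Omega_1\subseteq I_1$ be the union of all $L_t$, $t\in P$, and define $\Omega_2$ analogously. Since the intervals $L_t$ of nearby $t$ overlap, $\psi_1=\psi_2$ is constant on each connected piece, and the constants agree across overlapping pieces. Now do the geometry of the family $L_t=I_1\cap(2t-I_2)$.

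For $t$ near $\frac12(\inf I_1+\inf I_2)$, the interval $L_t$ is an initial segment of $I_1$ and $2t-L_t$ is an initial segment of $I_2$. Symmetrically, final segments arise near the top of $S$. Middle values of $t$ give intervals touching neither end of one of the $I_i$, or covering all of one of them. This splits into the following cases.

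(a) Some $L_t$ equals all of $I_1$ or $2t-L_t$ equals all of $I_2$. Then one $\psi_j$ is constant $D$ on all of $I_j$. The other $\psi_i$ equals $D$ on the open set $\Omega_i$, which is a countable union of disjoint open intervals $U_n$. Outside it, i.e.\ on $\frac12\big((I_i\setminus\bigcup U_n)+I_j\big)$, $\varphi$ must vanish, since otherwise some $L_t$ would reach there. If $\Omega_i=I_i$ this is case \eqref{P1}; otherwise it is case \eqref{P3}.

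(b) Otherwise every $L_t$ is a proper initial or final segment of both intervals. Then $P$ lies in a lower part and an upper part of $S$. The lower part produces intervals $U_1,U_2$ with $\psi$ equal to a constant $D$, and the upper part produces $V_1,V_2$ with a constant $E$. Setting $a_i:=\sup U_i$, $b_i:=\inf V_i$ and $K_i:=I_i\setminus(U_i\cup V_i)$, we get $\varphi=0$ on $\frac12(K_1+I_2)\cup\frac12(I_1+K_2)$. This is case \eqref{P2}.

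\emph{Step 4.} Mutual exclusivity follows directly from the defining conditions: the properness requirements in \eqref{P2} and \eqref{P3} and the requirement $\bigcup U_n\neq I_i$. The converse is a direct verification. At each midpoint $\frac{x+y}2$ either $\varphi$ vanishes, or both $x$ and $y$ lie in the constancy regions with the same constant.

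The main obstacle is Step 2. It must rule out, using only the fact that $\varphi$ is a derivative, the possibility that $P$ is a nowhere dense set of positive measure on which the local constancy fails. I would first try the Denjoy--Clarkson density property together with Steinhaus' theorem as sketched. If that proves insufficient, I would fall back on a Baire category argument on the $F_\sigma$ set $P$ to obtain intervals of periods.
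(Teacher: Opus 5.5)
The paper gives no proof of this theorem (it is quoted from the second author's earlier work), so your argument must stand on its own, and as written it does not.

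\textbf{Step 2 (repairable).} Your Claim that $P\cap N$ contains an interval is false in general. In alternative (1) with $\psi_1\equiv\psi_2\equiv D$, every derivative is admissible, e.g.\ a Pompeiu-type derivative with dense zero set; then $P\neq\emptyset$ but $P$ has empty interior. You do not need the Claim, provided you make the windows uniform in $t$:
\begin{itemize}
\item For $N=\,]t_0-\eta,t_0+\eta[$, the Denjoy--Clarkson property gives $\mu(P\cap N)>0$, and Steinhaus gives $]-\delta,\delta[\,\subseteq(P\cap N)-(P\cap N)$.
\item For $t,t'\in P\cap N$ one has $\psi_2(y)=\psi_2(y+2(t'-t))$ whenever $y,\,y+2(t'-t)\in I_2$ and $y\in 2t-I_1$. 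Note that $2t-I_1\supseteq\,]2t_0+2\eta-\sup I_1,\,2t_0-2\eta-\inf I_1[$.
\item Hence $\psi_2$ is locally constant, so constant, on $W_\eta:=I_2\cap\,]2t_0+2\eta-\sup I_1,\,2t_0-2\eta-\inf I_1[$.
\item Finally $W_\eta\uparrow I_2\cap(2t_0-I_1)$ as $\eta\downarrow 0$.
\end{itemize}
The Baire fallback is therefore unnecessary.

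\textbf{Step 3 (the genuine gap).} Your implication ``(b) $\Rightarrow$ alternative (2)'' is false. Take $I_1=\,]0,1[$, $I_2=\,]0,2[$, $\varphi(t)=\max(0,\frac12-t)$, $\psi_1\equiv D$, and $\psi_2=D$ on $]0,1[$ with $\psi_2(y)=y$ on $[1,2[$.
\begin{itemize}
\item Here $P=\,]0,\frac12[$ and $L_t=2t-L_t=\,]0,2t[$. No $L_t$ equals $I_1$ and no $2t-L_t$ equals $I_2$, so you are in (b).
\item But $U_1=I_1$ and $K_1=\emptyset$.
\item No choice of data realizes (2): any nonempty $K_1\subseteq I_1$ makes $\frac12(K_1+I_2)$ meet $]0,\frac12[$, where $\varphi\neq 0$.
\item This solution actually belongs to (3) with $j=1$.
\end{itemize}
The same failure occurs whenever the initial and final segments overlap, since then $D=E$ and $U_1\cup V_1=I_1$.

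The correct split is whether $\Omega_1=I_1$ or $\Omega_2=I_2$.
\begin{itemize}
\item If $\Omega_j=I_j$: the unions of segments carrying different constants are disjoint open sets covering the connected $I_j$. So there is only one constant, and you land in (3) or (1).
\item Otherwise: $P$ avoids the middle range, the initial and final pieces are proper and disjoint, $K_i\neq\emptyset$, and you get (2).
\end{itemize}
In the converse for (2) you also skipped mixed pairs $x\in U_1$, $y\in V_2$ with $D\neq E$. They are fine, because $\frac12(U_1+V_2)\subseteq\frac12(K_1+I_2)\cup\frac12(I_1+K_2)$, but this needs checking.

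\textbf{Step 4 (cannot succeed).} The alternatives are not mutually exclusive as stated. Take $\varphi\equiv 0$ and $\psi_1\equiv\psi_2\equiv D$:
\begin{itemize}
\item (1) holds;
\item (2) holds with $a_i=b_i$ interior points and $E=D$;
\item (3) holds with $j=1$ and any proper open subinterval of $I_2$.
\end{itemize}
Only ``at least one'' is provable, and that is all Corollaries \ref{koviA} and \ref{koviB} use.
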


\begin{coro}\label{koviA}
Let $S\subseteq\RR$ be bounded from above and $(\varphi,\psi_1,\psi_2)$ be a solution of \eq{aux} such that $\varphi$ has a primitive function on $S$. Then at least one of the following assertions is true.
\begin{enumerate}[(i)]\itemsep=1mm
	\item\label{kovi1} There exists $p\in S$ such that $\varphi(x)=0$ for all $p<x<\sup S$.
	\item\label{kovi2} There exist $q_1\in I_1$, $q_2\in I_2$ and $E\in\RR$ such that $\psi_k(x)=E$ for all $q_k<x<\sup I_k$.
\end{enumerate}
\end{coro}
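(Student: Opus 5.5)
The plan is to apply Theorem \ref{peter} to the solution $(\varphi,\psi_1,\psi_2)$ and check, one case at a time, that each of the three mutually exclusive alternatives gives (\ref{kovi1}) or (\ref{kovi2}). Since $S=\frac12(I_1+I_2)$ is bounded from above, so are $I_1$ and $I_2$. Hence $\sup S=\frac12(\sup I_1+\sup I_2)$ is finite, and every interval of the form $\left]p,\sup S\right[$ with $p\in S$ is a nonempty subinterval of $S$.

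\emph{Case (\ref{P1}).} If $\varphi\equiv0$ on $S$, any $p\in S$ works for (\ref{kovi1}). If instead $\psi_1\equiv\psi_2\equiv D$, choose arbitrary $q_k\in I_k$ and set $E:=D$; this gives (\ref{kovi2}).

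\emph{Case (\ref{P2}).} Split according to whether $V_1$ and $V_2$ are both nonempty.
\begin{itemize}
\item If both are nonempty, then $b_k<\sup I_k$. Since $\inf I_k\le a_k\le b_k$, we may take $q_k\in\left]b_k,\sup I_k\right[$ (or simply $q_k=b_k$ when $b_k\in I_k$). Then $\psi_k\equiv E$ on $\left]q_k,\sup I_k\right[\subseteq V_k$, which is (\ref{kovi2}).
\item Otherwise some $V_i$ is empty, say $V_1=\emptyset$, so $b_1=\sup I_1$. Then $K_1=I_1\setminus U_1=[a_1,\sup I_1[\,\cap I_1$. This set is nonempty because $U_1\neq I_1$, and it contains points arbitrarily close to $\sup I_1$. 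Fix $x_0\in K_1$ and put $p:=\frac12(x_0+\inf I_2)$, or any $p\in\frac12(x_0+I_2)$ if $\inf I_2=-\infty$. The set $\frac12(K_1+I_2)\supseteq\frac12(\left[x_0,\sup I_1\right[+I_2)$ then contains $\left]p,\sup S\right[$, up to choosing $p$ inside $S$. Since $\varphi$ vanishes on $\frac12(K_1+I_2)$, we get (\ref{kovi1}). The case $V_2=\emptyset$ is symmetric, using $\frac12(I_1+K_2)$.
\end{itemize}

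\emph{Case (\ref{P3}).} Here $\psi_j\equiv D$ on $I_j$, while $\psi_i\equiv D$ only on $\bigcup U_n$. Write $Z:=I_i\setminus\bigcup U_n$, which is nonempty by hypothesis.
\begin{itemize}
\item Suppose some $U_n$ has $\sup U_n=\sup I_i$. Then take $q_i\in U_n$ and any $q_j\in I_j$ with $E:=D$; this gives (\ref{kovi2}).
\item Otherwise $Z$ contains points arbitrarily close to $\sup I_i$. Indeed, $Z$ is nonempty, and if some $z\in Z$ had $\left]z,\sup I_i\right[\subseteq\bigcup U_n$, that interval would lie in a single $U_n$ by connectedness, with $\sup U_n=\sup I_i$, which we excluded. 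Pick $z_0\in Z$. Then $\frac12(z_0+I_j)=\left]\frac12(z_0+\inf I_j),\frac12(z_0+\sup I_j)\right[$, but this alone need not reach $\sup S$. Instead, use the whole set: $\frac12(Z+I_j)$ is a union of intervals of length $\frac12|I_j|$ centred at points of $Z$ accumulating to $\sup I_i$. For $z$ close to $\sup I_i$, these intervals cover a left neighbourhood of $\sup S=\frac12(\sup I_i+\sup I_j)$. More precisely, for each $t<\sup S$ near $\sup S$, choose $z\in Z$ with $2t-\sup I_j<z<\sup I_i$; then $2t-z\in I_j$ as long as $2t-z>\inf I_j$, which holds when $t$ is close enough to $\sup S$. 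Hence $\varphi$ vanishes on some $\left]p,\sup S\right[$, which is (\ref{kovi1}).
\end{itemize}

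The main obstacle is this last covering step, together with the analogous one in Case (\ref{P2}): showing that the zero set of $\varphi$ contains a full left neighbourhood of $\sup S$, and choosing $p$ inside $S$ carefully when the intervals have infinite left endpoints. Both rely only on $Z$ (resp.\ $K_1$) accumulating at $\sup I_i$ and on $I_j$ being an open interval with right end $\sup I_j$. The argument never uses the primitive-function hypothesis beyond the fact that it is needed to invoke Theorem \ref{peter}.
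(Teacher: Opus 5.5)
Your proof is correct. In cases (\ref{P1}) and (\ref{P2}) it matches the paper. Picking $x_0\in K_1$ and treating $\inf I_2=-\infty$ separately even guarantees $p\in S$, which the paper's choice $p:=\inf\frac12\big((I_{i_0}\setminus U_{i_0})+I_{3-i_0}\big)$ glosses over.

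The genuine difference is in case (\ref{P3}).

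\emph{The paper's route.} It never uses the description of the zero set of $\varphi$ or of the intervals $U_n$. It assumes (\ref{kovi1}) fails and takes $x_0\in I_i$ with $\sup I_i-x_0<\diam I_j$. It then picks $v_0\in\left]\frac12(x_0+\sup I_j),\sup S\right[$ with $\varphi(v_0)\neq0$. The functional equation now forces $\psi_i(x)=\psi_j(2v_0-x)=D$ for all $x\in(2v_0-I_j)\cap I_i$, and this set is a left neighbourhood of $\sup I_i$, giving (\ref{kovi2}). So the paper needs only that one of $\psi_1,\psi_2$ is constant on its whole domain; the argument is short and by contradiction.

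\emph{Your route.} You argue directly and use the full structure of (\ref{P3}). Either some $U_n$ is a left neighbourhood of $\sup I_i$, which gives (\ref{kovi2}). Or, by connectedness, $Z=I_i\setminus\bigcup U_n$ accumulates at $\sup I_i$, and then $\frac12(Z+I_j)\supseteq\left]\frac12(\sup I_i+\inf I_j),\sup S\right[$ gives (\ref{kovi1}). This costs a covering argument, but it tells you exactly which alternative occurs in terms of the data of Theorem \ref{peter}.

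One small point of exposition. As written, your connectedness step only shows that no $z\in Z$ satisfies $\left]z,\sup I_i\right[\subseteq\bigcup U_n$. To get accumulation at $\sup I_i$, run the same argument for an arbitrary $t\in I_i$. Alternatively, apply it to $t=\sup Z$, which lies in $Z$ because $Z$ is closed in $I_i$.
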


\begin{proof}
Under our conditions, for the functions $\varphi$, $\psi_1$ and $\psi_2$, we have the possibilities (\ref{P1}), (\ref{P2}) or (\ref{P3}) of Theorem \ref{peter}. 

Possibility (\ref{P1}) trivially implies the validity of \textit{(\ref{kovi1})} or \textit{(\ref{kovi2})}, so in that case we are done.

Assume that (\ref{P2}) of Theorem \ref{peter} holds. If $V_1$ and $V_2$ are nonempty, then \emph{(\ref{kovi2})} holds with $q_1:=b_1$ and $q_2:=b_2$. If $V_{i_0}$ is empty for some $i_0\in\{1,2\}$, then, particularly, $U_{i_0}$ is non-empty. In that case \emph{(\ref{kovi1})} holds with $p:=\inf\frac12\big((I_{i_0}\setminus U_{i_0})+I_{3-{i_0}}\big)$.

Suppose finally that (\ref{P3}) of Theorem \ref{peter} holds. For simplicity, assume that $j=1$. If $(\ref{kovi1})$ is true, then we are done, hence we may also assume that $(\ref{kovi1})$ does not hold. Observe that, by our assumption on $S$, we must have $\sup I_1,\sup I_2\in\RR$. Let $x_0\in I_1$ be any element with $\sup I_1-x_0<\diam I_2$ and let $u_0:=\frac12(x_0+\sup I_2)$. Then we must have an element $u_0<v_0<\sup S$ such that $\varphi(v_0)\neq 0$. Now consider the interval $H:=(2v_0-I_2)\cap I_1\subseteq I_1$.

It is obvious that $\psi_1$ is identically $D$ on $H$. Indeed, let $x\in H$ be any. Then $x\in I_1$ and there exists $y\in I_2$, such that $v_0=\frac12(x+y)$ holds. Substituting $x$ and $y$ into our equation \eq{pet}, we obtain that $\varphi(v_0)(\psi_1(x)-\psi_2(y))=0$. This implies that $\psi_1(x)=\psi_2(y)=D$.

Then, on the one hand, it is easy to see that $\diam(2v_0-I_2)=\diam I_2$. On the other hand, $\inf(2v_0-I_2)>2u_0-\sup I_2=x_0$. Consequently, $\sup H=\sup I_1$. Therefore \emph{(\ref{kovi2})} holds true with $q_1:=\inf H$ and with any fixed $q_2\in I_2$.
\end{proof}

\begin{coro}\label{koviB}
Let $S\subseteq\RR$ be bounded from below and $(\varphi,\psi_1,\psi_2)$ be a solution of \eq{aux} such that $\varphi$ has a primitive function on $S$. Then at least one of the following assertions is true.
\begin{enumerate}[(i)]\itemsep=1mm
	\item There exists $p\in S$ such that $\varphi(x)=0$ for all $\inf S<x<p$.
	\item There exist $q_1\in I_1$, $q_2\in I_2$ and $D\in\RR$ such that $\psi_k(x)=D$ for all $\inf I_k<x<q_k$.
\end{enumerate}
\end{coro}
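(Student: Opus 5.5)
The cleanest route is to reduce Corollary \ref{koviB} to Corollary \ref{koviA} by the reflection $x\mapsto -x$, rather than repeat the case analysis. Given a solution $(\varphi,\psi_1,\psi_2)$ of \eq{pet} on $I_1\times I_2$ with $S$ bounded from below, we set
\[
\tilde I_k:=-I_k,\qquad \tilde S:=\tfrac12(\tilde I_1+\tilde I_2)=-S,\qquad \tilde\varphi(t):=\varphi(-t),\qquad \tilde\psi_k(s):=\psi_k(-s).
\]
Then $\tilde\varphi\big(\tfrac{x+y}2\big)\big(\tilde\psi_1(x)-\tilde\psi_2(y)\big)=\varphi\big(\tfrac{(-x)+(-y)}2\big)\big(\psi_1(-x)-\psi_2(-y)\big)=0$ for $(x,y)\in\tilde I_1\times\tilde I_2$, so the reflected triple is again a solution of the same equation. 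The set $\tilde S=-S$ is bounded from above. Moreover, if $\Phi$ is a primitive of $\varphi$ on $S$, then $t\mapsto-\Phi(-t)$ is a primitive of $\tilde\varphi$ on $\tilde S$. Hence Corollary \ref{koviA} applies to $(\tilde\varphi,\tilde\psi_1,\tilde\psi_2)$ on $\tilde I_1,\tilde I_2$.

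It remains to translate the two alternatives back.

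\emph{Alternative (i).} Suppose there is $\tilde p\in\tilde S$ with $\tilde\varphi(x)=0$ for $\tilde p<x<\sup\tilde S$. Setting $p:=-\tilde p\in S$ and using $\sup\tilde S=-\inf S$, we get $\varphi(x)=0$ for $\inf S<x<p$. This is (i) of the present statement.

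\emph{Alternative (ii).} Suppose there are $\tilde q_k\in\tilde I_k$ and $E$ with $\tilde\psi_k(x)=E$ for $\tilde q_k<x<\sup\tilde I_k$. Setting $q_k:=-\tilde q_k\in I_k$ and $D:=E$, we get $\psi_k(x)=D$ for $\inf I_k<x<q_k$. This is (ii).

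Each step is a routine check, so no real obstacle arises. The only points needing care are the sign in the primitive (it is $-\Phi(-t)$, not $\Phi(-t)$) and the fact that sup and inf swap under negation. Alternatively, one could rerun the proof of Corollary \ref{koviA} with left endpoints in place of right endpoints, using Theorem \ref{peter} and the nonempty intervals $U_1,U_2$ in case (\ref{P2}); the reflection argument avoids this duplication.
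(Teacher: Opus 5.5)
Your reflection argument is correct. It takes a tidier route than the one the paper indicates.

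The paper's proof of Corollary \ref{koviB} consists only of the remark that it parallels the proof of Corollary \ref{koviA}. The reader is meant to redo the case analysis of Theorem \ref{peter} at the left endpoints:
\begin{itemize}
\item in case (\ref{P2}), take $q_k:=a_k$ when $U_1,U_2$ are nonempty, and otherwise take $p$ to be the supremum of the interval $\frac12\big((I_{i_0}\setminus V_{i_0})+I_{3-i_0}\big)$ on which $\varphi$ vanishes;
\item in case (\ref{P3}), pick $v_0\in S$ close to $\inf S$ with $\varphi(v_0)\neq 0$, and use the interval $(2v_0-I_j)\cap I_i$, whose infimum is $\inf I_i$.
\end{itemize}

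You instead observe that all the hypotheses are invariant under $x\mapsto -x$. The reflected triple solves \eq{pet} on $(-I_1)\times(-I_2)$, the set $-S$ is bounded from above, and $t\mapsto-\Phi(-t)$ is a primitive of $\tilde\varphi$. So Corollary \ref{koviA}, which is stated for arbitrary nonempty open intervals $I_1,I_2$, applies verbatim. Your translation of its two alternatives back, using $\sup(-S)=-\inf S$ and $\sup(-I_k)=-\inf I_k$, is exactly right.

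Your version gives a complete, short proof with no duplicated case analysis, using Corollary \ref{koviA} as a black box. The paper's parallel argument would be self-contained from Theorem \ref{peter}, but it is left entirely to the reader.

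You also correctly read the statement's reference to \eq{aux} as the equation \eq{pet} on $I_1\times I_2$, which matches the setting fixed before Theorem \ref{peter}.
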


\begin{proof}
Naturally, the proof parallels that of Corollary \ref{koviA}.
\end{proof}

\subsection{Complementarity Results}\label{PA2}

In Section \ref{A} (cf. Remark \ref{freedom}), we already mentioned that the behavior of the function $F$, in a certain sense, is closely related to that of the functions $g_1$ and $g_2$. Roughly speaking, as long as one of them is affine, the other can be chosen freely. In the following, we examine this relationship over subintervals. It turns out that as soon as $F$ ceases to be affine, the functions $g_1$ and $g_2$ become affine, and vice versa. We begin by focusing on the function $F$.

\begin{prop}\label{gaf}
Let $(F,f_k,G,g_k)$ be a solution of the equation \eqref{eq-Invariance-eq}, such that each of its coordinate functions is differentiable and $g_1$ and $g_2$ are strictly monotone in the same sense. Let further $U\in\mathscr{A}_F^{\max}$ with $U\neq I$.
\begin{enumerate}\itemsep=1mm
\item\label{1gaf} If $\inf U\in I$, then there exist constants $D,\delta_1,\delta_2\in\RR$ with $D\neq 0$ such that $\mathscr{A}_{g_1}(D,\delta_1)\cap\mathscr{A}_{g_2}(D,\delta_2)$ contains an open neighborhood of $\inf U$. 
\item\label{2gaf} If $\sup U\in I$, then there exist constants $E,\epsilon_1,\epsilon_2\in\RR$ with $E\neq 0$ such that $\mathscr{A}_{g_1}(E,\epsilon_1)\cap\mathscr{A}_{g_2}(E,\epsilon_2)$ contains an open neighborhood of $\sup U$. 
	\end{enumerate}
\end{prop}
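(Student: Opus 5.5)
The plan is to derive the auxiliary equation of Proposition \ref{aux} on a small window around $a:=\inf U$. I then combine the non-affinity of $F$ just to the left of $a$ (which comes from maximality) with the structure Theorem \ref{peter}. I only treat (\ref{1gaf}); (\ref{2gaf}) is symmetric.

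\emph{Setting up.} Let $U\in\mathscr{A}_F^{\max}(A,b)$ with $a=\inf U\in I$. By Lemma \ref{L6} and the definition of $U_\mathrm{ref}$, the set $U^*$ is an open interval that properly contains $U$ and extends to the left of $a$. Fix $c<a<d$ with $W:=\,]c,d[\,\subseteq U^*$ and $d\in U$. Restricting Proposition \ref{aux} to $W$, we get a constant $B$ such that
$$\varphi\Big(\frac{u+v}2\Big)\big(\psi_1(u)-\psi_2(v)\big)=0,\qquad (u,v)\in I_1\times I_2,$$
with $I_k:=g_k(W)$ and $S:=\frac12(I_1+I_2)$. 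Here $\varphi(t)=G'(2t)-B$ has the primitive $t\mapsto\frac12G(2t)-Bt$, so Theorem \ref{peter} applies. The choice of a bounded window $W$ is what makes $S$ and the $I_k$ bounded. From the proof of Proposition \ref{aux} we also keep the identity
$$F'\Big(\frac{x+y}2\Big)-A=2\varphi\Big(\frac{g_1(x)+g_2(y)}2\Big)g_1'(x),\qquad x,y\in W.$$

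\emph{Producing points where $\varphi\neq0$.} Suppose $F'=A$ on some interval $]a-\varepsilon,a[$. Then, since $F$ is differentiable and hence continuous, $F$ coincides with $Ax+b$ on $]a-\varepsilon,a[\,\cup U$, which contradicts the maximality of $U$. Hence there are points $t_n\uparrow a$ with $F'(t_n)\neq A$. Putting $x=y=t_n$ into the identity above gives $\varphi(s_n)\neq0$ with $s_n:=\frac12\big(g_1(t_n)+g_2(t_n)\big)\to s_0:=\frac12\big(g_1(a)+g_2(a)\big)$.

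At each such $s_n$ the equation forces $\psi_1(u)=\psi_2(2s_n-u)$ along the whole antidiagonal through $(g_1(t_n),g_2(t_n))$. Because $W$ has fixed width on both sides of $a$ while $t_n\to a$, these antidiagonals meet $I_1\times I_2$ in segments whose projections contain fixed neighbourhoods of $g_1(a)$ and $g_2(a)$.

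\emph{Case analysis via Theorem \ref{peter}.} Case (\ref{P1}) with $\varphi\equiv0$ is excluded by $\varphi(s_n)\neq0$. So in case (\ref{P1}) $\psi_1$ and $\psi_2$ equal the same constant on all of $I_1,I_2$, and we are done.

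In case (\ref{P3}), $\psi_j\equiv D$ on $I_j$. Moreover $\varphi(s_n)\neq0$ means that $(2s_n-I_j)\cap I_i$ avoids $I_i\setminus\bigcup U_m$. This interval is connected, so it lies inside a single $U_m$, on which $\psi_i\equiv D$, and for $n$ large it contains a neighbourhood of $g_i(a)$.

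In case (\ref{P2}), $s_n\notin\frac12(K_1+I_2)\cup\frac12(I_1+K_2)$. Letting $n\to\infty$ and using the width of $W$, this should force $g_k(a)$ to lie in the interior of $U_k$ for both $k$, or in the interior of $V_k$ for both $k$. Then $\psi_1=\psi_2=D$ (or $E$) near these points.

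In every case there are a constant $D$ and a neighbourhood $N$ of $a$ such that $g_k'(x)=\psi_k(g_k(x))=D$ for $x\in N$. Hence $g_k(x)=Dx+\delta_k$ on $N$. Finally $D\neq0$, since otherwise $g_k$ would be constant on $N$, contradicting strict monotonicity.

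\emph{Main obstacle.} I expect the hardest step to be the bookkeeping in case (\ref{P2}) and, to a lesser extent, case (\ref{P3}). One must show that the nonvanishing of $\varphi$ at points $s_n$ accumulating at $s_0$ really pins $g_k(a)$ into the interior of a constancy interval, with the \emph{same} constant for $k=1,2$, rather than onto its boundary. My first attempt is to choose $d$ far enough inside $U$, relative to $a-c$, that each antidiagonal through $(g_1(t_n),g_2(t_n))$ spans a neighbourhood of $g_k(a)$ of size bounded below uniformly in $n$.
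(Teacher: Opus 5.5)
Your route is sound and differs from the paper's. The one step you leave open, case~(\ref{P2}), closes in a few lines.

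\textbf{Closing case~(\ref{P2}).} Fix one large $n$ and set $P^1:=I_1\cap(2s_n-I_2)$ and $P^2:=2s_n-P^1=I_2\cap(2s_n-I_1)$. As you noted, these contain fixed neighbourhoods of $g_1(a)$ and $g_2(a)$ for every window $c<a<d$. So no special choice of $d$ is needed, and no limit in $n$.
\begin{itemize}
\item Since $\varphi(s_n)\neq0$, we have $P^k\cap K_k=\emptyset$. Hence each interval $P^k$ lies in $U_k$ or in $V_k$ (with $a_i,b_i$ as in Theorem~\ref{peter}).
\item The mixed configuration must be excluded, not merely hoped away. If, say, $U_2=\emptyset$, case~(\ref{P2}) gives no information on $\psi_1$ over $U_1$.
\item It is excluded as follows. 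If $P^1\subseteq U_1$, then $U_1\neq I_1$ gives $a_1<\sup I_1$, which forces $\sup P^1=2s_n-\inf I_2$. Hence $\inf P^2=\inf I_2<b_2$ (because $V_2\neq I_2$), so $P^2\not\subseteq V_2$. The case $P^1\subseteq V_1$ is symmetric.
\item So both $P^k$ lie in the $U_k$'s, in which case $U_1\times U_2\neq\emptyset$ and $\psi_1=\psi_2=D$ there. Or both lie in the $V_k$'s, with constant $E$.
\end{itemize}
Your worry about a common constant is empty in any case. The condition $\varphi(s_n)\neq0$ gives $\psi_1(u)=\psi_2(2s_n-u)$ for all $u\in P^1$, so constancy of either function on its $P^k$ transfers to the other.

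\textbf{Comparison with the paper.} The paper works only on the one-sided window $g_k(U^-\cap U^*)$, in three steps:
\begin{itemize}
\item It shows that $\varphi$ cannot vanish on any $\frac12(g_1(W)+g_2(W))$ with $\sup W=p$. This goes through a detour: $G$ would then be affine there, and Propositions~\ref{ext} and~\ref{Gmax} contradict maximality.
\item It invokes Corollary~\ref{koviA} to get $g_k'=D$ on a left neighbourhood $L$ of $p$.
\item In a separate second step it differentiates \eqref{eq-Invariance-eq} for $x\in R$, $y\in L$. This gives $(D-g_1'(x))(F'(\frac{x+y}2)-A)=0$, which carries the affinity of $g_k$ past $p$.
\end{itemize}

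Your diagonal identity $F'(t)-A=2\varphi\bigl(\tfrac12(g_1(t)+g_2(t))\bigr)g_1'(t)$ produces the non-vanishing points of $\varphi$ directly from maximality. It would also replace the paper's detour through $G$ and Proposition~\ref{Gmax}. Your two-sided window then lets a single antidiagonal cover a full neighbourhood of $g_k(a)$. So both sides come at once, and the paper's second step disappears.

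The price is that you must run through the cases of Theorem~\ref{peter} by hand, as above. The paper instead packages that analysis into the one-sided Corollary~\ref{koviA}, and that corollary's one-sidedness is exactly what forces the separate extension step.
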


\begin{proof}
	Let $F(x)=Ax+b$ if $x\in U$ for some constants $A,b\in\RR$. We note that $F(x)=Ax+b$ holds for $x\in\mathrm{cl}_I(U)$ as well. Assume that $p:=\inf U\in I$ holds. The case $\sup U \in I$ can be handled in a similar manner of reasoning.
	
	\emph{In the first step, we construct a left-sided open neighborhood $L$ of $p$ on which $g_1$ and $g_2$ are affine with the same non-zero slope.} On the one hand, in view of Proposition \ref{Aff}, there exist constants $B,\beta\in\RR$ such that $G(u)=Bu+\beta$ for all $u\in g_1(U)+g_2(U)$. On the other hand, by Proposition \ref{aux}, we get that
	\Eq{auxU}{
		\varphi\Big(\frac{u+v}2\Big)\big(\psi_1(u)-\psi_2(v)\big)=0,\qquad
		(u,v)\in g_1(U^*)\times g_2(U^*),
	}
	holds with
	\Eq{*}{
		\varphi(t)=G'(2t)-B\quad\text{if } t\in\tfrac12\big(g_1(U^*)+g_2(U^*)\big)\quad\text{and}\quad
		\psi_k(s)=g_k'\circ g_k^{-1}(s)\quad\text{if } s\in g_k(U^*),
	}
	where $U^*:=U_\mathrm{ext}\cap U_\mathrm{ref}$. Due to our assumption $U\neq I$, the inclusion $U\subseteq U^*$ is strict. From this point on, we assume that $g_1$ and $g_2$ are strictly increasing. The strictly decreasing case can be handled similarly, with obvious modifications.

We claim that there is no open subinterval $W\subseteq U^*$ of positive length with $\inf I<\inf W=:\xi$ such that $\sup W=p$ and $\varphi(t)=0$ whenever $t\in\frac12\big(g_1(W)+g_2(W)\big)$. If this were not the case, then we would have $G'(t)=B$ for all $t\in g_1(W)+g_2(W)$. Consequently, there would exist $\beta^*\in\RR$ such that $G(t)=Bt+\beta^*$ if $t\in g_1(W)+g_2(W)$. Utilizing the facts that $W$ is bounded from above, and $U$ is bounded from below, we obtain that
\Eq{*}{
\sup\big(g_1(W)+g_2(W)\big)=g_1(p)+g_2(p)=\inf\big(g_1(U)+g_2(U)\big).}
Since $G$ is continuous at the point $g_1(p)+g_2(p)$, it follows that $\beta^*=\beta$. Thus we obtained that $G(u)=Bu+\beta$ holds whenever $u\in g_1(J)+g_2(J)$, where $J:=\conv(U\cup\{\xi\})$. Combining this with the fact that \eq{fk+} of Proposition \ref{ext} holds over $J$, by Proposition \ref{Gmax}, we obtain that $\xi=\inf U$, contradicting the fact that $W$ was a nonempty open interval.

In the next step, let $I_1:=g_1(U^-\cap U^*)$ and $I_2:=g_2(U^-\cap U^*)$. In light of the above, if we apply Corollary \ref{koviA} for $S:=\frac12(I_1+I_2)$, then we obtain that case \emph{(\ref{kovi2})} of Corollary \ref{koviA} must be valid. Thus there exist $q_1\in I_1$, $q_2\in I_2$ and $D\in\RR$ such that
\Eq{*}{
D=\psi_k(u)=g_k'\circ g_k^{-1}(u),\qquad q_k<u<\sup I_k=g_k(p).
}
Note that the strict monotonicity of $g_1$ implies that $D\neq 0$. Thus $g_k'(x)=D$ whenever $g_k^{-1}(q_k)<x<p$, and hence there exist constants $\delta_1,\delta_2\in\RR$ such that $g_k(x)=D x+\delta_k$ if $g_k^{-1}(q_k)<x<p$. Moreover, because of the continuity of $g_1$ and $g_2$, we also have $g_k(p)=D p+\delta_k$ for $k=1,2$. Thus let
\Eq{*}{L:=\,]\,g_1^{-1}(q_1),p\,]\,\cap\,]\,g_2^{-1}(q_2),p\,]\subseteq U^*.}
	
\emph{In the second step we show that the above left-sided open neighborhood $L$ of $p$ can be extended to the right side of the point $p$ as well. This continuation of $L$ will be denoted by $R$.} 

Let $r:=2p-\max\big(g_1^{-1}(q_1), g_2^{-1}(q_2)\big)=2p-\inf L$. We show that $g_1'$ and $g_2'$ equal identically $D$ on the interval $R:=\,]\,p,r\,[\,\cap\,U^*$. Differentiating equation \eqref{eq-Invariance-eq} with respect to its second variable, for $x\in R$ and $y\in L$, we get that
\Eq{*}{
\frac12 F'\Big(\frac{x+y}2\Big)+f_2'(y)=D G'\big(g_1(x)+g_2(y)\big).
}
Therefore, if we differentiate \eqref{eq-Invariance-eq} with respect to its first variable, by $D\neq 0$, it follows that
\Eq{*}{
\big(D-g_1'(x)\big)F'\Big(\frac{x+y}2\Big)=2g_1'(x)f_2'(y)-2D f_1'(x)
}
is valid for $x\in R\subseteq U^*$ and $y\in L\subseteq U^*$. Then, using formula \eq{fk+} of Proposition \ref{ext} over $U^*$, we can simplify the above equation to
\Eq{*}{
\big(D-g_1'(x)\big)\Big(F'\Big(\frac{x+y}2\Big)-A\Big)=0,\qquad (x,y)\in R\times L.
}
	
Consequently, if there was a point $x_0$ in $R$ for which $g_1'(x_0)\neq D$ then $F$ would be affine on the interval $\frac12(x_0+L)$. Its supremum $M$ is nothing else but $\frac12(x_0+p)$, hence $p<M$. For its infimum $m$, the inequality $m<p$ is equivalent to $x_0<r$, which is true. The inequalities $m<p<M$ contradict the maximality of $U$. \emph{Summarizing the above, we have thus obtained that $L\cup R\in\mathscr{A}_{g_1}(D,\delta_1)$.}

The statement concerning $g_2$ follows analogously by exchanging the role of the variables $x$ and $y$ in the above reasoning.
\end{proof}

We now formulate the counterpart of the above proposition as well.

\begin{prop}\label{gaf2}Let $(F,f_k,G,g_k)$ be a solution of the equation \eqref{eq-Invariance-eq}, such that each of its coordinate functions is differentiable. Let further $U_k\in\mathscr{A}^{\max}_{g_k}(D,\delta_k)$ for some constants $D,\delta_1,\delta_2\in\RR$ with $D\neq 0$. Finally assume that $U:=U_1\cap U_2\neq I$.
\begin{enumerate}
\item\label{1gaf2} If $\inf U\in I$, then $\mathscr{A}_F$ contains an open neighborhood $V$ of $\inf U$ for which
\Eq{*}{
\tfrac12(\inf U+\sup U)\leq \sup V
}
holds true. 
\item\label{2gaf2} If $\sup U\in I$, then $\mathscr{A}_F$ contains an open neighborhood $V$ of $\sup U$ for which
\Eq{*}{
\inf V\leq \tfrac12(\inf U+\sup U)
}
holds true. 
\end{enumerate}
Consequently, if $\inf I<\inf U$ and $\sup U<\sup I$, then $F$ is affine on the subinterval $U$.
\end{prop}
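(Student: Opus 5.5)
The plan is to let the differentiated equation do the work. Affinity of $g_1$ with slope $D$ on the left of $\inf U$ will fail arbitrarily close to $p:=\inf U$, and each failure point will force $F'$ to be constant on a long interval. Part (\ref{2gaf2}) is symmetric, so I only treat (\ref{1gaf2}).

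\emph{Step 1: find points where $g_1'\neq D$ accumulating at $p$ from the left.} Since $p=\max(\inf U_1,\inf U_2)$, I may assume without loss of generality that $p=\inf U_1$, so $p\in I$. I claim that every left neighborhood of $p$ contains a point $x_0$ with $g_1'(x_0)\neq D$. If not, $g_1'=D$ on some $]p-\varepsilon,p[$, so $g_1(x)=Dx+\delta_1'$ there. Continuity of $g_1$ at $p$ and $g_1(p)=Dp+\delta_1$ (a limit from inside $U_1$) force $\delta_1'=\delta_1$. Then $]p-\varepsilon,\sup U_1[\,\in\mathscr{A}_{g_1}(D,\delta_1)$, which contradicts the maximality of $U_1$. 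Hence there is a sequence $x_n\uparrow p$ with $g_1'(x_n)\neq D$.

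\emph{Step 2: the key identity.} By Proposition \ref{gkc} applied on $U$, there are $C,\gamma_k\in\RR$ with $f_k(y)=Cy+\gamma_k$ for $y\in U$; in particular $f_2'(y)=C$ and $g_2'(y)=D$ there. Differentiate \eqref{eq-Invariance-eq} at a point $(x,y)$ with $x\in I$, $y\in U$ and $\frac{x+y}2\in I$. This gives
\Eq{*}{
\tfrac12F'\Big(\tfrac{x+y}2\Big)+f_1'(x)=G'\big(g_1(x)+g_2(y)\big)g_1'(x),\qquad
\tfrac12F'\Big(\tfrac{x+y}2\Big)+C=D\,G'\big(g_1(x)+g_2(y)\big).
}
Only differentiability of $G$ on its whole domain is needed here, not any formula for it. Eliminating $G'$ via $D\neq0$ yields
\Eq{*}{
\big(g_1'(x)-D\big)F'\Big(\tfrac{x+y}2\Big)=2Df_1'(x)-2Cg_1'(x).
}
So for $x=x_n$ the derivative $F'$ is constant on $\frac12(x_n+U)=\,]\tfrac12(x_n+p),\tfrac12(x_n+\sup U)[$. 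Therefore $F$ is affine there.

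\emph{Step 3: glue.} For large $n$ we have $x_n>2p-\sup U$, and then the interval $V_n:=\frac12(x_n+U)$ contains $p$. All such $V_n$ contain $p$, so they pairwise overlap, and the affine representations agree on the overlaps. Hence $V:=\bigcup_n V_n$ (over large $n$) lies in $\mathscr{A}_F$ and is an open neighborhood of $p$. Moreover $\sup V\geq\lim_n\frac12(x_n+\sup U)=\frac12(\inf U+\sup U)$, with the obvious reading if $\sup U=+\infty$.

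\emph{Consequence.} If both endpoints of $U$ lie in $I$, I obtain $V$ around $\inf U$ and $V'$ around $\sup U$ with $\inf V'\le m:=\frac12(\inf U+\sup U)\le\sup V$. If the two intervals overlap, the affine pieces agree. If they merely touch at $m$, differentiability of $F$ at $m$ matches the slopes and continuity matches the constants. Either way $F$ is affine on $V\cup V'\cup\{m\}\supseteq U$.

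The main obstacle I expect is Step 1: one must make sure that maximality really excludes $g_1'=D$ on a one-sided neighborhood. That is where the continuity of $g_1$ at $p$ and the fixed constant $\delta_1$ are used. The rest is bookkeeping on domains, namely checking that $\frac{x_n+y}2\in I$, which is automatic because $x_n,y\in I$ and $I$ is an interval.
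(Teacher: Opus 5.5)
Your proof is correct and follows the paper's argument. It takes a sequence $x_n\to\inf U$ from the left with $g_{k_0}'(x_n)\neq D$, uses Proposition~\ref{gkc} to make $f_{3-k_0}'$ constant on $U$, eliminates $G'$ to get $F'$ constant on $\frac12(x_n+U)$, glues these intervals, and uses differentiability of $F$ at the midpoint for the final claim. Your only deviations tighten the details: you justify the sequence via maximality of $U_1$ and continuity of $g_1$, and you glue the intervals $\frac12(x_n+U)$ themselves, each of which contains $\inf U$, whereas the paper glues only $]\inf U,\frac12(x_n+\sup U)[$ and then appeals to Proposition~\ref{gaf}, whose monotonicity hypothesis is not assumed here.
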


\begin{proof}
The proof will be carried out only for the case $p:=\inf U\in I$. The remaining case $\sup U\in I$ is to be treated similarly. 

By $\inf U=\max(\inf U_1,\inf U_2)$, there exist an index $k_0\in\{1,2\}$ and a sequence $(x_n):\NN\to I\setminus\{p\}$ be such that $x_n\to p$ from left as $n\to\infty$ and that $g_{k_0}'(x_n)\neq D$ for all $n\in\NN$. In view of Proposition \ref{gkc}, there exists $C\in\RR$ such that $f_{3-k_0}'(y)=C$ whenever $y\in U$. Thus, differentiating equation \eqref{eq-Invariance-eq} independently with respect to each of its variables, for fixed $n\in\NN$ and for any $y\in U$, we obtain the system
\begin{align*}
\frac12F'\Big(\frac{x_n+y}2\Big)+f_{k_0}'(x_n)&=G'\big(g_{k_0}(x_n)+g_{3-k_0}(y)\big)g_{k_0}'(x_n)\\
\frac12F'\Big(\frac{x_n+y}2\Big)+C&=D G'\big(g_{k_0}(x_n)+g_{3-k_0}(y)\big).
\end{align*}
Then, using that $D\neq0$, an obvious computation yields that
\Eq{*}{
F'\Big(\frac{x_n+y}2\Big)=\frac{2Cg_{k_0}'(x_n)-2D f_{k_0}'(x_n)}{D-g_{k_0}'(x_n)},\qquad y\in U.
}
Consequently, for any fixed $n\in\NN$, the function $F$ is affine on the open subinterval $U_n:=\frac12(x_n+U)\subseteq I$.
	
On the one hand, $x_n<p=\inf U$ for all $n\in\NN$, hence $\inf U_n<p$ for all $n\in\NN$. On the other hand, it is easy to see that $\diam(U_n)=\frac12\diam(U)$ for all $n\in\NN$. Hence we must have an index $n_0\in\NN$ such that $p\in U_k$ if $k\geq n_0$. Without loss of generality, we can assume that $p\in U_n$ for all $n\in\NN$. 

Since the family $\{\, \left] \, p,\sup U_n \, \right[ \, :  n\in\NN\}$ is a chain of open intervals, there exist constants $A,b\in\RR$ such that $F(x)=Ax+b$ whenever $x\in\,\left] \, p,\sup U_n \, \right[ \,$ and $n\in\NN$. Consequently, $F(x)=Ax+b$ holds if
\Eq{*}{
x\in\bigcup\limits_{n\in\NN}\,\left]\,  p,\sup U_n \, \right[\,=
\, \left] \, \inf U,\tfrac12(\inf U+\sup U)\, \right[ \,.
}
Then there uniquely exists $V\in\mathscr{A}^{\max}_F(A,b)$ with $\left] \, \inf U,\tfrac12(\inf U+\sup U) \, \right[\,\subseteq V$. By Proposition \ref{gaf}, it also follows that $\inf V<\inf U$ must hold. Consequently, $V$ is an open neighborhood of $\inf U$.

Finally, if $\inf I<\inf U$ and $\sup U<\sup I$, then $\inf U,\sup U\in\RR$. Let $u:=\tfrac12(\inf U+\sup U)$. In view of statements (1) and (2) of our current proposition, there exist constants $A_1,A_2,b_1,b_2\in\RR$ such that, for $x\in U$, we have
\Eq{*}{
F(x)=A_1x+b_1\quad \text{ if } x<u
\qquad\text{and}\qquad 
F(x)=A_2x+b_2\quad \text{ if }x>u.}
By the differentiability of $F$ at $u$, we get that $A_1=F'(u)=A_2$, and hence $b_1=b_2$ follows as well.
\end{proof}

\begin{coro}\label{glue1}
Let $(F,f_k,G,g_k)$ be a solution of equation \eqref{eq-Invariance-eq} such that each of its coordinate functions is differentiable, $F$ is partially affine, and $g_1$ and $g_2$ are strictly monotone in the same sense. Then there exist constants $D,\delta_1,\delta_2\in\RR$ with $D\neq 0$ and $U\in\mathscr{A}_{g_1}(D,\delta_1)\cap\mathscr{A}_{g_2}(D,\delta_2)$ such that
\Eq{*}{
\inf U=\inf I\qquad\text{or}\qquad\sup U=\sup I.
}
\end{coro}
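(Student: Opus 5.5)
We argue by contradiction, using the alternation between $F$ being affine and $g_1,g_2$ being affine with a common slope (Propositions \ref{gaf} and \ref{gaf2}). Since $F$ is partially affine, there is some $U_0\in\mathscr{A}_F^{\max}$ with $U_0\neq I$. Hence $\inf U_0\in I$ or $\sup U_0\in I$; say $p:=\inf U_0\in I$, the other case being symmetric. By Proposition \ref{gaf}(\ref{1gaf}) there are constants $D\neq 0$, $\delta_1,\delta_2$ such that $\mathscr{A}_{g_1}(D,\delta_1)\cap\mathscr{A}_{g_2}(D,\delta_2)$ contains an open neighborhood of $p$. So there are maximal intervals $U_k\in\mathscr{A}^{\max}_{g_k}(D,\delta_k)$, both containing $p$, and we set $U:=U_1\cap U_2$. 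Clearly $U$ is a nonempty open interval and $U\in\mathscr{A}_{g_1}(D,\delta_1)\cap\mathscr{A}_{g_2}(D,\delta_2)$. If $\inf U=\inf I$ or $\sup U=\sup I$, we are done.

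Otherwise $\inf I<\inf U$ and $\sup U<\sup I$. Then $U\neq I$, and the final assertion of Proposition \ref{gaf2} shows that $F$ is affine on $U$. Moreover, parts (\ref{1gaf2}) and (\ref{2gaf2}) of that proposition give open neighborhoods $V^-$ of $\inf U$ and $V^+$ of $\sup U$ in $\mathscr{A}_F$. Each of them overlaps $U$, since $\sup V^-\ge$ the midpoint of $U$ and $\inf V^+\le$ that midpoint. Because affine representations agree on overlapping open intervals, $F$ is affine with one and the same formula on $V^-\cup U\cup V^+$, an open interval strictly containing $\operatorname{cl}_I U$. Let $W\in\mathscr{A}_F^{\max}$ be the maximal interval containing it. Then $\inf W<\inf U$ and $\sup W>\sup U$.

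The key step is now to derive a contradiction from the fact that $p\in U$ lies in the interior of $W$, while $p$ is an endpoint of the maximal affine interval $U_0$ of $F$. Since $p\in W$ and $U_0\subseteq W\cap U_0$, the interval $W\cup U_0$ is connected and $F$ is affine on both pieces. The affine formulas coincide on the overlap $W\cap U_0\ni$ (points near $p$ on the right), which is nonempty and open. Hence $W\cup U_0\in\mathscr{A}_F$, and maximality of $U_0$ forces $W\subseteq U_0$, contradicting $\inf W<p=\inf U_0$. This closes the case $\inf U_0\in I$, and the case $\sup U_0\in I$ follows by Proposition \ref{gaf}(\ref{2gaf}) in the same way.

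The main obstacle I expect is the bookkeeping in applying Proposition \ref{gaf2}: checking that its hypotheses match (its $U$ is exactly $U_1\cap U_2$ with maximal $U_k$) and that the neighborhoods $V^\pm$ really overlap $U$, so that the affine pieces glue. If the midpoint inequality only gives $\sup V^-=\frac12(\inf U+\sup U)$ with equality, gluing still works through the differentiability of $F$ at the midpoint, exactly as at the end of the proof of Proposition \ref{gaf2}.
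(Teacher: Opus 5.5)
Your proposal is correct and follows the paper's proof. Both take a maximal affine interval of $F$ and apply Proposition~\ref{gaf} at an endpoint $p\in I$. Both then form $U=U_1\cap U_2$ from the maximal common-slope intervals and use Proposition~\ref{gaf2}: if both endpoints of $U$ lay inside $I$, then $F$ would be affine on $U\ni p$, so the maximal interval would extend past $p$. Two small points: the detour through $V^\pm$ and $W$ is unnecessary, since affinity of $F$ on $U$ with $p\in U$ already gives the contradiction; and ``$U_0\subseteq W\cap U_0$'' should simply read $W\cap U_0\neq\emptyset$.
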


\begin{proof}
Let $J\in\mathscr{A}^{\max}_F$ be any. By our assumption $J\neq I$, thus we may assume that $p:=\inf J\in I$. Then, by Proposition \ref{gaf}, there exist constants $D,\delta_1,\delta_2\in\RR$ with $D\neq 0$ such that $\mathscr{A}_{g_1}(D,\delta_1)\cap\mathscr{A}_{g_2}(D,\delta_2)$ contains an open neighborhood $P$ of $p$. For $k=1,2$, let $U_k\in\mathscr{A}^{\max}_{g_k}(D,\delta_k)$ be the unique interval for which $P\subseteq U_k$ holds. Let finally $U:=U_1\cap U_2$. Then $U\in\mathscr{A}_{g_1}(D,\delta_1)\cap\mathscr{A}_{g_2}(D,\delta_2)$.

If $U=I$, then our statement follows easily. If $U\neq I$, then, by Proposition \ref{gaf2}, we must have either $\inf U=\inf I$ or $\sup U=\sup I$, otherwise $J$ would be extendable.
\end{proof}

\subsection{Main Result of the Partially Affine Case}\label{PA3} Before we formulate the main result of Section \ref{PA}, we prove a lemma concerning the sets defined in \eq{H-+}.
\begin{lemm}\label{H}If $H\subseteq I$ is a nonempty subinterval, then
\Eq{HIncl}{
H^-+H^+\subseteq H+I.}
\end{lemm}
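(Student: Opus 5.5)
The plan is a direct elementwise verification. Take $s\in H^-+H^+$, so $s=x+y$ with $x\in H^-$ and $y\in H^+$. If either $H^-$ or $H^+$ is empty, the left-hand side is empty and there is nothing to prove, so assume both are nonempty. Then $a:=\inf H$ and $b:=\sup H$ are finite and both lie in $I$: each of $H^-$, $H^+$ is a nonempty subinterval of $I$, and $x<a\le b<y$ with $x,y\in I$. We have to write $s=h+z$ with $h\in H$ and $z\in I$.

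Fix any $h\in H$; this is possible since $H$ is nonempty. Then $x<a\le h\le b<y$. Set $z:=s-h=x+y-h$. Since $h\le b<y$, we get $z>x$. Since $h\ge a>x$, we get $z<y$. So $x<z<y$, and because $I$ is an interval containing $x$ and $y$, it follows that $z\in I$. Hence $s=h+z\in H+I$, which is the inclusion \eq{HIncl}.

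No step here is a genuine obstacle. The only care needed is with the degenerate cases: $H^-$ or $H^+$ empty (for instance when $\inf H=\inf I$ or $\sup H=\sup I$), and $H$ possibly not open. Both are handled above, since the argument uses only the strict inequalities $x<h<y$, which follow from the definitions of $H^\pm$ in \eq{H-+}.
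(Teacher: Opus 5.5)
Your proof is correct, but it takes a different route from the paper's. The paper first disposes of the case where $H^-$ or $H^+$ is empty. It then treats both sides as intervals and compares endpoints. Using $\inf H^-=\inf I$ and $\sup H^+=\sup I$, it checks
$\inf(H+I)=\inf H+\inf I\le \sup H+\inf I=\inf(H^-+H^+)$ and $\sup(H^-+H^+)=\inf H+\sup I\le \sup H+\sup I=\sup(H+I)$.
It reads the inclusion off these two inequalities. That last step tacitly uses that both sides are open intervals (true here, since $I$ and $H^\pm$ are open), so that comparing endpoints suffices.

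You instead work elementwise. For each $s=x+y$ you write $s=h+(x+y-h)$ with an arbitrary $h\in H$, and the strict inequalities $x<h<y$ put $x+y-h$ strictly between $x$ and $y$, hence in $I$. Your route needs no endpoint computation and no discussion of openness or infinite endpoints. It also never uses that $H$ is an interval: any nonempty $H$ works. The paper's route is a compact endpoint check in the interval-arithmetic style used throughout that section. Your observation that $\inf H,\sup H\in I$ is true but not needed.
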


\begin{proof} If at least one of the sets $H^-$ or $H^+$ is empty, then $H^-+H^+$ is empty, hence in that case the above inclusion trivially holds. Assume that none of the sets $H^-$ and $H^+$ is empty. Then $H\neq I$ and we have
\Eq{*}{
\inf(H+I)=\inf H+\inf H^-\leq\sup H+\inf H^-=\inf(H^-+H^+)
}
and
\Eq{*}{
\sup(H^-+H^+)=\inf H+\sup H^+\leq\sup H+\sup H^+=\sup(H+I).
}
Having intervals on both sides of \eq{HIncl}, these yield the validity of our statement.
\end{proof}

To simplify the formulation of the proof of our main theorem, we introduce the following notation. Assume that $g_1$ and $g_2$ are differentiable on $I$. For a given constant $D\in\RR$ with $D\neq 0$, let $\mathscr{L}(D)$ stand for the intersection of the intervals $U_1\in\mathscr{A}^{\max}_{g_1}$ and $U_2\in\mathscr{A}^{\max}_{g_2}$ for which $\inf U_1=\inf U_2=\inf I$ and $g_1'(x)=g_2'(y)=D$ hold for all $x\in U_1$ and $y\in U_2$, provided that such intervals exist. Otherwise $\mathscr{L}(D)$ is empty.

In a similar manner, $\mathscr{R}(D)$ stands for the intersection of the intervals $U_1\in\mathscr{A}^{\max}_{g_1}$ and $U_2\in\mathscr{A}^{\max}_{g_2}$ for which $\sup U_1=\sup U_2=\sup I$ and $g_1'(x)=g_2'(y)=D$ hold for all $x\in U_1$ and $y\in U_2$, provided that such intervals exist. Otherwise $\mathscr{R}(D)$ is empty.

Now we can formulate the main result of the current section.

\begin{remark}
To avoid misunderstandings regarding the notation used in the next theorem, we would like to clarify the following. If a $+$ or $-$ symbol appears in the superscript of a subinterval $K$ of $I$, then definition \eq{H-+} must be applied. If a real number carries a $+$ or $-$ in its superscript, this indicates that the corresponding coefficient (more precisely, the associated representation of the function in question) is active or valid over the sets $K^+$ or $K^-$, respectively. This will be, incidentally, also evident from the formulation of the theorem.

Finally, $\overline{K}$ will stand for the arithmetic mean $\frac12(K+I)$.
\end{remark}

\begin{thm}\label{M3}
Let $(F,f_k,G,g_k)$ be such that each of its coordinate functions is differentiable, $F$ is partially affine on $I$ and that $g_1,g_2:I\to\RR$ are strictly monotone in the same sense. Then $(F,f_k,G,g_k)$ solves equation \eqref{eq-Invariance-eq} if and only if there exist an interval $K\subset I$ of positive length, which is closed in $I$ and constants $A,B,C^-,C^+,D^-,D^+\in\RR$ and $\alpha,\beta_k,\gamma_k^-,\gamma_k^+,\delta_k^-,\delta_k^+\in\RR$ with
\Eq{Const}{
D^-D^+\neq 0,
\qquad C^\pm+\frac12A=BD^\pm
\qquad\text{and}\qquad
\gamma_k^\pm+\frac12\alpha=B\delta_k^\pm+\beta_k
}
such that
\Eq{-}{
f_k(s)=C^-s+\gamma_k^-,\quad 
G(u)=F\Big(\frac{u-\delta^-}{2D^-}\Big)+\frac{C^-}{D^-}(u-\delta^-)+\gamma^-,\quad\text{and}\quad
g_k(s)=D^-s+\delta_k^-
}
if $s\in K^-$ and $u\in g_1(K^-)+g_2(K^-)$,
\Eq{k}{
F(x)=Ax+\alpha,\qquad f_k(x)=-\tfrac12 F(x)+Bg_k(x)+\beta_k\qquad\text{and}\qquad G(u)=Bu+\beta
}
if $x\in\overline{K}$ and $u\in g_1(\overline{K})+g_2(\overline{K})$, and
\Eq{+}{
f_k(t)=C^+t+\gamma_k^+,\quad 
G(w)=F\Big(\frac{w-\delta^+}{2D^+}\Big)+\frac{C^+}{D^+}(w-\delta^+)+\gamma^+,\quad\text{and}\quad
g_k(t)=D^+t+\delta_k^+
}
if $t\in K^+$ and $w\in g_1(K^+)+g_2(K^+)$, where 
\Eq{*}{
\beta=\beta_1+\beta_2,\qquad \gamma^\pm=\gamma_1^\pm+\gamma_2^\pm\qquad\text{and}\qquad \delta^\pm=\delta_1^\pm+\delta_2^\pm,}
furthermore
\Eq{G}{
G(v)=Bv+\beta,\qquad v\in g_1(H)+g_2(J),
}
where $H,J\in\{K^-,K,K^+\}$ with $H\neq J$.
\end{thm}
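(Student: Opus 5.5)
The plan is to prove sufficiency by a direct case analysis over the three regions $K^-,K,K^+$, and necessity by building $K$ from the maximal intervals on which $g_1,g_2$ are affine with a common slope, using the complementarity results of Subsection \ref{PA2}.

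\emph{Sufficiency.} We verify \eqref{eq-Invariance-eq} for $(x,y)\in H\times J$ with $H,J\in\{K^-,K,K^+\}$.

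If $H=J=K^-$, then $g_k$ are affine with common slope $D^-$ there, and the formulas in \eq{-} are exactly those of Proposition \ref{gkc}. The identity then follows from the computation behind that proposition. The case $H=J=K^+$ is symmetric.

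If $H=J=K$, then $\tfrac12(x+y)\in\overline K$. Formula \eq{k} is the form given by Proposition \ref{Aff}, so the identity holds by the sufficiency part of that proposition.

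In the mixed cases $H\neq J$, we still have $\tfrac12(x+y)\in\overline K$. When one variable lies in $K$ this is immediate; for the pair $(K^-,K^+)$ it follows from Lemma \ref{H}. Hence $F$ is affine at the midpoint, and $G$ is given by \eq{G}. The only remaining task is to check that the affine expressions for $f_k$ on $K^\pm$ agree with $-\tfrac12F+Bg_k+\beta_k$. Substituting $F=Ax+\alpha$ and $g_k=D^\pm s+\delta^\pm_k$ shows this agreement is equivalent to the constraints in \eq{Const}. The same substitution shows that the different formulas for $G$ coincide wherever their domains overlap.

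\emph{Necessity.} Assume $(F,f_k,G,g_k)$ is a differentiable solution with $F$ partially affine.
\begin{enumerate}
\item By Corollary \ref{glue1}, and its mirror image, take $\mathscr L(D^-)$ and $\mathscr R(D^+)$: the maximal intervals reaching $\inf I$, respectively $\sup I$, on which $g_1$ and $g_2$ are affine with a common nonzero slope. Define $K:=I\setminus(\mathscr L(D^-)\cup\mathscr R(D^+))$, so that $K^-=\mathscr L(D^-)$ and $K^+=\mathscr R(D^+)$. One of these two sets may be empty, in which case the corresponding constants play no role.
\item On $K^\pm$, Proposition \ref{gkc} immediately gives \eq{-} and \eq{+}, with additive functions that are linear by differentiability.
\item The core claim is that $F$ is affine on all of $\overline K=\tfrac12(K+I)$, and in particular that there is only one maximal interval in $\mathscr A_F$. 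Take $U\in\mathscr A_F^{\max}$. Propositions \ref{gaf} and \ref{gaf2} show that each finite endpoint of $U$ has a neighbourhood on which $g_1,g_2$ are affine with a common slope. Each such neighbourhood lies in a common affine interval whose opposite endpoint must again touch an affine piece of $F$, unless the interval reaches $\inf I$ or $\sup I$. Iterating this, and using the differentiability gluing from the end of the proof of Proposition \ref{gaf2}, I expect to show that the affine pieces of $F$ merge into a single interval. That interval should contain $\tfrac12(K+I)$.
\item To reach the full set $\overline K$, I plan to combine Proposition \ref{ext} on $U^*$ with the auxiliary equation \eq{aux}. By Theorem \ref{peter} and Corollaries \ref{koviA} and \ref{koviB}, on any region where $\psi_k=g_k'\circ g_k^{-1}$ is not constant, $\varphi$ must vanish. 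Vanishing of $\varphi$ means $G$ is affine on the corresponding sums. I will then use the argument from the proof of Proposition \ref{gaf}: differentiating \eqref{eq-Invariance-eq} at pairs $(x,y)$ with $y\in K^\pm$ gives $(D^\pm-g_1'(x))\bigl(F'(\tfrac12(x+y))-A\bigr)=0$. This forces $F'=A$ at every midpoint $\tfrac12(x+y)$ with $x\in K$, because $g_k'\neq D^\pm$ at points of $K$ arbitrarily close to its ends, by maximality of $\mathscr L(D^-)$ and $\mathscr R(D^+)$.
\item Proposition \ref{Aff} then yields \eq{k}. The relation \eq{G} on the mixed sums follows from \eq{aux} with $\varphi\equiv0$ there. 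Finally, \eq{Const} is obtained by comparing the two expressions for $f_k$ and for $G$ on the overlaps of the regions, for example at $\inf K$ using continuity.
\end{enumerate}

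The main obstacle is step 3 together with the requirement that $K$ have positive length. One must exclude the possibility that $F$ has several disjoint maximal affine intervals separated by stretches where the $g_k$ are affine. This is exactly the error flagged in point II of the Introduction, so this step needs care. I would first attempt the exclusion through the alternatives of Theorem \ref{peter} applied on each $U^*$, combined with Proposition \ref{Gmax}. I also expect the degenerate configuration, in which $g_1,g_2$ are affine on all of $I$ with a common slope, to need separate handling, possibly by choosing $K$ inside the affine interval of $F$.
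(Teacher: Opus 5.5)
Your sufficiency argument follows the paper. The skeleton of your necessity argument also matches it: defining $K$ through $\mathscr L(D^-)$ and $\mathscr R(D^+)$, then Proposition~\ref{gkc} on $K^\pm$, then Proposition~\ref{Aff} on $\overline K$. However, the core of necessity, your steps 3--4, has a genuine gap.

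\emph{The target of step 3 is false.} $\mathscr A_F$ need not have a single maximal interval. On $K^\pm\setminus\overline K$ the function $F$ enters only through $G$ and is otherwise free. In the paper's Example, replace $F$ on $]0,1]$ by $\tfrac{14}5$ on $]0,\tfrac25]$ and by $\tfrac{14}5+\tfrac{10}3(x-\tfrac25)^2$ on $[\tfrac25,1]$, so that $F(1)=F'(1)=4$, and keep $G(u)=F(u/2)+u$ on $]0,2]$. This is still a differentiable solution. Now $]0,\tfrac25[$ and $]1,4[$ are two maximal affine intervals of $F$, separated by a stretch on which $g_1,g_2$ are affine. That is exactly the configuration you intend to exclude, so an iteration starting from an arbitrary $U\in\mathscr A_F^{\max}$ and aiming to merge all affine pieces cannot succeed.

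\emph{Step 4 does not reach all of $\overline K$.} The identity $(D^\pm-g_1'(x))\bigl(F'(\tfrac{x+y}2)-A\bigr)=0$ carries information only at points $x$ with $g_1'(x)\neq D^\pm$. Maximality of $\mathscr L(D^-)$ and $\mathscr R(D^+)$ supplies such points only near the endpoints of $K$; inside $K$, $g_1'$ and $g_2'$ may equal $D^\pm$ on whole subintervals. Moreover, writing the slope as $A$ uses \eq{fk+} at $x\in K$, which presupposes $K\subseteq U^*$. What you actually get is Proposition~\ref{gaf2}: affine pieces of $F$ around $\inf K$ and around $\sup K$, reaching $\inf\overline K$ and $\sup\overline K$ respectively. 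This gives no control of the middle of $K$ and no reason for the two slopes to agree.

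The missing idea is to fix one specific maximal interval and argue by a single contradiction instead of an iteration.
Let $p:=\inf K\in I$, and let $U\in\mathscr A_F^{\max}$ contain the neighbourhood of $p$ provided by Proposition~\ref{gaf2}(2) for $K^-=\mathscr L(D^-)$. This already gives $\inf U\le\tfrac12(\inf I+p)=\inf\overline K$.
Suppose $\sup U\le\sup K$ and $\sup U<\sup I$.
\begin{enumerate}
\item Proposition~\ref{gaf} yields a neighbourhood $V$ of $\sup U$ on which $g_1,g_2$ are affine with a common slope.
\item Maximality of $\mathscr L(D^-)$ forces $\sup K^-<\inf V$.
\item If $\sup V\in I$, Proposition~\ref{gaf2} would make $F$ affine on $U\cup V$, contradicting maximality of $U$.
\item Hence $\sup V=\sup I$, so $V\subseteq K^+$, contradicting $\sup U\in V\cap K$.
\end{enumerate}
Thus $K\subseteq U$. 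Then Proposition~\ref{gaf2}(1) at $\inf K^+=\sup K\in U$ gives $\sup U\ge\tfrac12(\sup K+\sup I)=\sup\overline K$; if $K^+=\emptyset$, simply $\sup U=\sup I$. Your remark in step 3, that a common-affine stretch of $g_1,g_2$ reaching neither $\inf I$ nor $\sup I$ forces $F$ to be affine across it, is the right ingredient. It just has to be used in this form.

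Smaller points:
\begin{itemize}
\item $K$ is not a singleton: continuity and differentiability of $g_k$ at $p$ would give $D^-=D^+$ and equal offsets, making $g_1,g_2$ common-affine on all of $I$.
\item \eq{G} should be obtained by substituting into \eqref{eq-Invariance-eq} exactly as in your sufficiency computation. You give no reason why $\varphi$ in \eq{aux} should vanish on the mixed sums.
\item \eq{Const} must be read off on the open overlap $K^-\cap\overline K$. The single point $\inf K$ yields only one relation per $k$.
\item In the degenerate case where $g_1,g_2$ are common-affine on all of $I$, choosing $K$ inside an affine interval of $F$ fails in general. We have $\diam\overline K>\tfrac12\diam I$, while that interval may be short. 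The paper takes $K=\emptyset$ there (so $K^-=K^+=I$), which lies outside the literal ``positive length'' clause of the statement.
\end{itemize}
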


\begin{proof}
\emph{First we deal with necessity.} By Corollary \ref{glue1}, it follows that there exist constants $D^-,D^+\in\RR$ with $D^-D^+\neq 0$ such that at least one of the open intervals $\mathscr{L}(D^-)$ and $\mathscr{R}(D^+)$ is not empty. Consequently,
\Eq{*}{
K:=I\setminus\big(\mathscr{L}(D^-)\cup\mathscr{R}(D^+)\big)
}
is a closed subinterval of $I$ which may be empty, but is certainly not equal to $I$. It is easy to observe that $K^-=\mathscr{L}(D^-)$ and $K^+=\mathscr{R}(D^+)$ hold.

Now we prove that $K$ cannot be a singleton, that is, of the form $\{p\}$. If it was, then $p=\sup K^-=\inf K^+$ would follow. Thus both of $K^-$ and $K^+$ are nonempty and hence there exist constants $\delta_1^-,\delta_2^-,\delta_1^+,\delta_2^+\in\RR$ such that $g_1(x)=D^-x+\delta_1^-$ and $g_2(x)=D^-x+\delta_2^-$ if $x\in K^-$ and $g_1(y)=D^+y+\delta_1^+$ and $g_2(y)=D^+y+\delta_2^+$ if $y\in K^+$. Due to the continuity of the functions in question, these formulas remain valid at the point $p$, from which, by the differentiability property, it follows that $D^-=D^+=:D$. Hence $\delta_1^-=\delta_1^+$ and $\delta_2^-=\delta_2^+$ also hold true. Consequently we have $g_1(x)=Dx+\delta_1$ and $g_2(x)=Dx+\delta_2$ whenever $x\in I$, contradicting that $\inf I<\inf K^+$ and $\sup K^-<\sup I$. In light of this, we can distinguish the following main cases.

\emph{Case 1.} If $K$ is empty then $K^-=K^+=I$ and $D^-=D^+=:D$ follow. Moreover $\overline{K}$ and $g_1(\overline{K})+g_2(\overline{K})$ are also empty. Furthermore there exist $\delta_1,\delta_2\in\RR$ such that $g_1(x)=Dx+\delta_1$ and $g_2(x)=Dx+\delta_2$ if $x\in I$. Then, applying Proposition \ref{gkc} under $U=I$, we get that there exist $C\in\RR$ and $\gamma_1,\gamma_2\in\RR$ such that \eq{-} and \eq{+} hold with $C^-=C^+=:C$, $\gamma_1^-=\gamma_1^+=:\gamma_1$, $\gamma_2^-=\gamma_2^+=:\gamma_2$, $\gamma^-=\gamma^+=\gamma_1+\gamma_2$, and $\delta^-=\delta^+=\delta_1+\delta_2$.
	
\emph{Case 2.} Assume that $K$ is a closed subinterval of positive length. Since $K\neq I$, we may also assume that $p:=\inf K=\sup K^-$ belongs to $I$. The case $\sup K \in I$ can be handled in a completely analogous manner. Then $K^-$ is not empty. Applying Proposition \ref{gkc} on $K^-$, we obtain that there exist constants $C^-\in\RR$ and $\gamma_1^-,\gamma_2^-,\delta_1^-,\delta_2^-\in\RR$ such that \eq{-} holds with $\gamma^-=\gamma_1^-+\gamma_2^-$ and $\delta^-=\delta_1^-+\delta_2^-$.
	
Due to $K^-=\mathscr{L}(D^-)$ and applying Proposition \ref{gaf2}, there exists an open neighborhood $U\subseteq I$ of $p$ such that $F$ is affine on $U$. We may assume that $U\in\mathscr{A}^{\max}_F$. Note that, in view of statement (2) Proposition \ref{gaf2}, we have
\Eq{ineq}{
\inf U\leq\frac12(\inf K^-+\sup K^-)=\frac12(\inf I+p)=\inf\overline{K}.}

Now we assert that $K\subseteq U$. This is trivial if $\sup U=\sup I$ holds. Thus we may and do assume that $\sup U<\sup I$. Then, to prove inclusion $K\subseteq U$, it is enough to show that $\sup K<\sup U$. Assume that this is not the case, that is, we have $\sup U\leq\sup K$. The interval $U$ was maximal and, by our assumption $\sup U<\sup I$, condition $U\neq I$ follows. Thus, by Proposition \ref{gaf}, we get that there exist open neighborhoods $V_1\in\mathscr{A}^{\max}_{g_1}$ and $V_2\in\mathscr{A}^{\max}_{g_2}$ of $\sup U$ such that $g_1$ and $g_2$ share the same nonzero slope on the intersection $V:=V_1\cap V_2$. By $K^-=\mathscr{L}(D^-)$, we must have $\sup K^-<\inf V$, which implies that $\inf I<\inf V$. If we had $\sup V\in I$ then, by Proposition \ref{gaf2}, we would have $U\cup V\in\mathscr{A}_F$ contradicting $U\in\mathscr{A}^{\max}_F$. Consequently we have $\sup V=\sup I$ and hence $V\subseteq K^+$. By our assumption $\sup U\leq \sup K$, it follows that $V\cap K$ is not empty. This leads to the contradiction
\Eq{*}{
\emptyset\neq V\cap K\subseteq K^+\cap K=\emptyset,
}
which shows that $K\subseteq U$ is indeed valid.

Now we are going to prove that $\overline{K}\subseteq U$ holds too. Depending on whether $K^+$ is empty or not, we can distinguish two further subcases.

\emph{Case 2.1.} If $K^+$ is empty, then $\sup K=\sup I$. Consequently we have $\sup\overline{K}=\sup I$, furthermore, by $K\subseteq U$, we have $\sup U=\sup I$. Combining this with inequality \eq{ineq}, the inclusion $\overline{K}\subseteq U$ follows.
	
\emph{Case 2.2.} If $K^+$ is not empty, then by Proposition \ref{gkc} we get that there exist $C^+\in\RR$ and 
$\gamma_1^+,\gamma_2^+,\delta_1^+,\delta_2^+\in\RR$ such that \eq{+} holds with $\gamma^+=\gamma_1^++\gamma_2^+$ and 
$\delta^+=\delta_1^++\delta_2^+$. If $\sup U=\sup I$, then our statment is trivial. Therefore assume that $\sup U<\sup I$. Hence 
there must exist an open neighborhood $V$ of $\sup U$ on which $g_1$ and $g_2$ are affine functions with a common non-zero slope. Since
\Eq{*}{
\sup K^-=p=\inf K<\sup K\leq \sup U,
}
it follows that $\sup V=\sup I$, hence $\sup U\in K^+$. By statement (1) of Proposition \ref{gaf2} and inequality \eq{ineq}, we get that
\Eq{*}{
\inf U\leq\inf\overline{K}<\sup\overline{K}=\frac12(\sup K+\sup I)=\frac12(\inf K^++\sup K^+)\leq\sup U.
}
Thus $\overline{K}\subseteq U$ and hence, by Proposition \ref{Aff}, there exist constants $A,B\in\RR$ and $\alpha,\beta_1,\beta_2\in\RR$ such that \eq{k} holds with $\beta=\beta_1+\beta_2$.

\emph{Now we can turn to the verification of the restrictions on some constants listed in \eq{Const}.} The validity of condition $D^-D^+\neq 0$ was concluded in the very first step. If either $K^-$ or $K^+$ is empty then the corresponding constant can be considered as any nonzero number.
	
If $K$ is empty, then let $A,\alpha\in\RR$ be arbitrarily fixed and then define $B:=\frac1D(C+\frac12A)$ and $\beta_k:=\gamma_k-\frac12\alpha-\frac{\delta_k}D(C+\frac12A)$, where $C, D, \gamma_k$ and $\delta_k$ were introduced in \emph{Case 1}. Then the second and third identities in \eq{Const} also are valid. If the intersection $K^-\cap\overline{K}$ is not empty, then the formulae in \eq{-} and \eq{k} are active and imply that
\Eq{*}{
\Big(C^-+\frac12 A-BD^-\Big)x+\Big(\gamma_k^-+\frac12\alpha-B\delta_k^--\beta_k\Big)=0,\qquad x\in 
K^-\cap\overline{K}.
}
This yields that the first half of the second and third identities in \eq{Const} are valid. The second half can be obtained using the nonempty interval $K^+\cap\overline{K}$ in the same way provided that it is not empty.

\emph{Finally we focus on assertion \eq{G}.} By symmetry considerations, it is obviously sufficient to deal with the cases 
$(H,J)\in\{(K^-,K),(K^+,K),(K^-,K^+)\}$. Furthermore, the proofs for $(K^-,K)$ and $(K^+,K)$ are completely analogous. Therefore we perform the proof only for the cases when $(H,J)=(K^-,K)$ or $(H,J)=(K^-,K^+)$.
	
Let $(H,J):=(K^-,K)$. If one of $K^-$ or $K$ is empty, then $g_1(K^-)+g_2(K)$ is empty and there is nothing to prove. Therefore we may assume that none of them is empty. Let $v\in g_1(K^-)+g_2(K)$ be any point. Then there exist $x\in K^-$ and $y\in K$ such that $v=g_1(x)+g_2(y)$. Then, considering that $\frac12(K^-+K)\subseteq\frac12(I+K)=\overline{K}$, equation \eqref{eq-Invariance-eq} reduces to
\Eq{*}{
\Big(C^-+\frac12 A\Big)x+\gamma_1^-+\frac12\alpha+Bg_2(y)+\beta_2=G(v).
}
Applying $\frac12A+C^-=BD^-$ and $\gamma_1^-+\frac12\alpha=B\delta_1^-+\beta_1$, we obtain that
\Eq{*}{
G(v)=BD^-x+B\delta_1^-+Bg_2(y)+\beta=B\big(D^-x+\delta_1^-+g_2(y)\big)+\beta=Bv+\beta.
}
	
Let $(H,J):=(K^-,K^+)$. Again, we may assume that none of them is empty. Let $v\in g_1(K^-)+g_2(K)$ be any. Then there are $x\in K^-$ and $y\in K^+$ such that $v=g_1(x)+g_2(y)$. By Lemma \ref{H}, it follows that $\frac12(K^-+K^+)\subseteq \overline{K}$. Therefore equation \eqref{eq-Invariance-eq} can be written as
\Eq{*}{
\frac{A}2(x+y)+\alpha+C^-x+\gamma_1^-+C^+y+\gamma_2^+=G(v).
}
Applying the identities in \eq{Const}, we obtain that
\Eq{*}{
G(v)=BD^-x+B\delta_1^-+BD^+y+B\delta_2^++\beta=Bv+\beta.
}

\emph{Now we can turn to the sufficiency.} Let $x,y\in I$ be any points. Without loss of generality, we may assume that $x\leq y$. If $x,y\in K^-$ or $x,y\in K^+$, then, by Proposition \ref{gkc}, the functions listed in \eq{-} or \eq{+} solve equation \eqref{eq-Invariance-eq} with arbitrary function $F$. Hence we may assume that $x\leq \sup K$ and $\inf K\leq y$. If $x,y\in K$, then by Proposition \ref{Aff} the functions listed in \eq{k} solve equation \eqref{eq-Invariance-eq} with arbitrary functions $g_1$ and $g_2$. Therefore assume that $\sup K<y$ or $x<\inf K$ holds. We are going to focus on the case when $x<\inf K$, that is, when $x\in K^-$. The proof for the case $\sup K<y$ is completely analogous.
	
Assume that $y\in K$. Then $y\in\overline{K}$ and $\frac12(x+y)\in\frac12(K^-+K)\subseteq\overline{K}$ hold. Starting from the left hand side of equation \eqref{eq-Invariance-eq}, we get that
	\Eq{*}{
		F\Big(\frac{x+y}2\Big)+f_1(x)+f_2(y)
		&=\Big(C^-+\frac12 A\Big)x+\gamma_1^-+\frac12\alpha+Bg_2(y)+\beta_2\\
		&=BD^-x+B\delta_1^-+Bg_2(y)+\beta=B\big(g_1(x)+g_2(y)\big)+\beta\\
		&=G\big(g_1(x)+g_2(y)\big),
	}
	where in the last step we have taken into account that $g_1(x)+g_2(y)\in g_1(K^-)+g_2(K)$. Hence in this case the equality is satisfied.
	
	Assume that $y\in K^+$. Then $\frac12(x+y)\in\frac12(K^-+K^+)$, thus, by Lemma \ref{H}, we have $\frac12(x+y)\in\overline{K}$. Therefore, starting again from the right hand side of equation \eqref{eq-Invariance-eq}, we obtain
	\Eq{*}{
		F\Big(\frac{x+y}2\Big)+f_1(x)+f_2(y)
		&=\Big(C^-+\frac12A\Big)x+\Big(C^++\frac12A\Big)y+\alpha+\gamma_1^-+\gamma_2^+\\
		&=BD^-x+BD^+y+B\delta_1^-+B\delta_2^++\beta\\
		&=B\big(g_1(x)+g_2(y)\big)+\beta=G\big(g_1(x)+g_2(y)\big),
	}
	where in the last step we applied the fact that $g_1(x)+g_2(y)\in g_1(K^-)+g_2(K^+)$.
\end{proof}

\begin{exmp}
To end this section we explicitly formulate a particular 
partially affine solution. 
For this purpose, in Theorem \ref{M3} we have to specify the 
interval $ K $ and all the constants. In the following 
example let 
$ I = \left] \, 0,4 \, \right[ \,$, 
$ K = \left[ 2,4 \, \right[ \, $, thus 
$ K^- = \left] \, 0,2 \, \right[ \,$, 
$ K^+ = \emptyset $ and 
$ \overline{K} = \left] \, 1,4 \, \right[ \,$. 
Moreover, $ A = 4 $, $ B = 3 $, 
$ C^- = D^- = 1 $, 
$ \alpha = \beta_k = \beta = \gamma_k^- = \gamma^- 
= \delta_k^- = \delta^- = 0 $ while 
$ C^+, D^+, \gamma_k^+ , \delta_k^+ $ are arbitrary 
such that \eq{Const} holds. 
Thus 
\begin{align*}
F(x) = 
\begin{cases} 
2x^2 + 2 \hspace{2mm} & \mbox{if } 0 < x \leq 1, \\ 
4x \hspace{2mm} & \mbox{if } 1 < x < 4;  
\end{cases} 
\hspace{15mm}
f_k(x) = 
\begin{cases} 
x \hspace{2mm} & \mbox{if } 0 < x \leq 2, \\ 
\frac{3}{4}x^2 - 2x + 3 \hspace{2mm} & \mbox{if } 2 < x < 4;  
\end{cases} \\ 
g_k(x) = 
\begin{cases} 
x \hspace{2mm} & \mbox{if } 0 < x \leq 2, \\ 
\frac{1}{4}x^2 + 1 \hspace{2mm} & \mbox{if } 2 < x < 4; 
\end{cases} 
\hspace{15mm}
G(u) = 
\begin{cases} 
\frac{1}{2}u^2 + u + 2 \hspace{2mm} & \mbox{if } 0 < u \leq 2, \\ 
3u \hspace{2mm} & \mbox{if } 2 < u < 10.   
\end{cases}
\end{align*} 
One can easily check that these functions are exactly once 
continuously differentiable. 
\end{exmp}

\section{Solutions, where $F$ is nowhere affine}\label{NA}
%
%
%

In the sequel we would like to determine the solutions of 
equation \eqref{eq-Invariance-eq} 
in the only remaining case where 
we assume that $ F $ is not affine on any 
nonempty open interval $ L \subseteq I $. 
It is easy to see that 
$ g_1'(x) \neq 0 $ and $ g_2'(x) \neq 0 $ 
for every $ x \in I $. 

\begin{prop}
Let $ \left( F, f_k \,, G, g_k \right) $ 
be a solution of the equation 
\eqref{eq-Invariance-eq}, 
such that each of its coordinate functions is differentiable. 
Suppose that $ \mathscr{A}_F = \emptyset $. 
Then $ g_1'(x) \neq 0 $ and $ g_2'(x) \neq 0 $ 
for every $ x \in I $. 
\end{prop}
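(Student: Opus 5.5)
The plan is to differentiate equation \eqref{eq-Invariance-eq} with respect to one variable and evaluate at a hypothetical zero of $g_k'$. At such a point the composite term on the right-hand side drops out, which forces $F'$ to be constant on a nondegenerate interval and contradicts $\mathscr{A}_F=\emptyset$. No monotonicity of $g_1,g_2$ is needed, only the differentiability of all coordinate functions.

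Concretely, I argue indirectly and first assume there is $x_0\in I$ with $g_1'(x_0)=0$. Fix any $y\in I$. Every function is differentiable, so the map $x\mapsto G\big(g_1(x)+g_2(y)\big)$ is differentiable at $x_0$ by the chain rule, with derivative $G'\big(g_1(x_0)+g_2(y)\big)g_1'(x_0)$. Differentiating \eqref{eq-Invariance-eq} with respect to $x$ at $x=x_0$ gives
\[
\tfrac12F'\Big(\frac{x_0+y}2\Big)+f_1'(x_0)=G'\big(g_1(x_0)+g_2(y)\big)\,g_1'(x_0)=0,\qquad y\in I.
\]
Hence $F'(t)=-2f_1'(x_0)$ for all $t\in\frac12(x_0+I)$. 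This set is a nonempty open subinterval of $I$, since $I$ is convex and $x_0\in I$.

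A differentiable function with constant derivative on an open interval is affine there, by the mean value theorem. So $\frac12(x_0+I)\in\mathscr{A}_F$, contradicting $\mathscr{A}_F=\emptyset$. For $g_2$ I run the same argument, differentiating with respect to $y$ at a point $y_0$ with $g_2'(y_0)=0$. This yields $F'\equiv-2f_2'(y_0)$ on $\frac12(I+y_0)$, again a contradiction.

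I do not expect a genuine obstacle. The only points needing care are that the chain rule applies pointwise with mere differentiability, so no continuity of $G'$ is used, and that $\frac12(x_0+I)$ is indeed an open subinterval of $I$ of positive length.
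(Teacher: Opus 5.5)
Your proof is correct and is essentially the paper's argument. You differentiate in $x$ at a zero $x_0$ of $g_1'$, obtain $F'\equiv-2f_1'(x_0)$ on $\frac12(x_0+I)$, conclude $\frac12(x_0+I)\in\mathscr{A}_F$ for a contradiction, and argue symmetrically for $g_2$; your remarks on the pointwise chain rule and the mean value theorem only spell out steps the paper leaves implicit.
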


\begin{proof}
We only show that $ g_1'(x) \neq 0 $ for all $ x \in I $, 
the proof of the other claim is analogous. 
Differentiating \eqref{eq-Invariance-eq} with respect 
to $ x $, we obtain 
\[
\frac{1}{2} F' \Big( \frac{x+y}{2} \Big) 
+ f_1'(x) = G' \big(g_1(x)+g_2(y) \big) \cdot g_1'(x) 
\]
for all $ x,y \in I$. Now if $ g_1'(x_0) = 0 $ 
for some $ x_0 \in I $ then 
$ F' \left( \frac{x_0+y}{2} \right) =  
- 2 f_1'(x_0) $ for all $ y \in I $. Therefore 
$ \frac{1}{2}(x_0 + I) \in \mathscr{A}_F \,$, 
which is a contradiction. 
\end{proof}

So from now on, we may divide by the derivatives 
$ g_1' $ and $ g_2' $. 
From \eqref{eq-Invariance-eq}, 
we deduce (see \cite[Theorem 2.]{Kis22}) 
that the system of two functional equations 
\begin{equation}\label{eq-derivatives_system}
\begin{cases}
\varphi \left( \frac{x+y}{2} \right) 
\left( \psi_1(x) + \psi_1(y) \right) = 
\Psi_1(x) + \Psi_1(y) \\ 
\varphi \left( \frac{x+y}{2} \right) 
\left( \psi_2(x) - \psi_2(y) \right) = 
\Psi_2(x) - \Psi_2(y),  
\end{cases} 
\qquad x,y \in I 
\end{equation}
holds for the functions 
\begin{equation}\label{eq_def_auxfuncs}
\varphi := \frac{1}{2}F' \,, \qquad 
\psi_k := \frac{1}{g_1'} + (-1)^k \frac{1}{g_2'} \,, \qquad
\Psi_k := -\frac{f_1'}{g_1'} -(-1)^k \frac{f_2'}{g_2'} \,, 
\qquad k = 1,2.
\end{equation}
Furthermore, by putting $ y = x $,  
clearly $ \Psi_1 = \varphi \cdot \psi_1 $ 
in the first equation. 
Thus our aim is to solve the functional equation 
\begin{equation}\label{eq-auxiliary-sum}
\varphi \left( \frac{x+y}{2} \right) 
\bigl( \psi_1(x) + \psi_1(y) \bigr) = 
\varphi(x) \psi_1(x) + \varphi(y) \psi_1(y),  
\qquad x,y \in I, 
\end{equation}
while assuming that there exist differentiable 
functions $ F, g_1 \,, g_2 : I \map \RR $ 
defined on an open interval 
$ \emptyset \neq I \subseteq \RR $ such that 
$ 0 \notin g_1'(I) \cup g_2'(I) $ and 
$ \varphi = \frac{1}{2} F' $ and 
$ \psi_1 = \frac{1}{g_1'} - \frac{1}{g_2'} \,$, 
moreover $ \mathscr{A}_F = \emptyset $, 
i.~e. $ F $ is not affine on 
any nonempty open subinterval of $ I $. 
Obviously, this condition is equivalent to the fact that 
$ \varphi $ is not constant on any nonempty open subinterval 
of $ I $. This natural assumption will appear in 
Theorem \ref{thm-add_equation_contsol}. 

The most general setting in which equation 
\eqref{eq-auxiliary-sum} is solved is that 
$ \varphi $ is continuous while the set of zeros of $ \psi_1 $ 
fulfill a technical assumption which is weaker 
then continuity. The details can be found in \cite{KP18}. 
It turns out that we have two kinds of solutions. 
Either $ \varphi $ is constant on open subinterval of $ I $ 
and $ \psi_1 $ is zero on another subinterval of $ I $. 
Or $ \varphi $ is strictly monotone, 
$ \psi_1 $ is different from zero everywhere, 
moreover $ \varphi, \psi_1 $ 
are infinitely many times differentiable. 
In our context, solutions of the first type are limited 
to the case when $ \psi_1 $ is identically zero on $I$ 
(and $ \varphi $ is arbitrary). 

Our idea is to show that if $ \varphi, \psi_1 $ 
are constructed from derivatives 
having the properties mentioned above, and 
the pair $ (\varphi, \psi_1) $ 
is a solution of \eqref{eq-auxiliary-sum} 
then $ \psi_1 $ is identically $ 0 $ 
or $ \varphi $ and $ \psi_1 $ are both continuous functions. 
This would mean that \cite[Theorem 11.]{KP18} can 
be applied in order to characterize the solutions. 
The proof of our first main theorem in this section 
requires some 
fundamental but non-trivial 
observations related to measure theory. 
Therefore we formulate a lemma which will play an 
essential role during the proof. 
The Lebesgue measure on $ \RR $ will be denoted by $ \mu $. 

\begin{lemm}\label{lemm-positive_measure_trns}
Let $ A_1 \,, A_2 \subseteq \RR $ be two sets with 
positive Lebesgue measure and let $ D \subseteq \RR $ 
be a dense subset of $ \RR $. 
Then there exists $ d \in D $ such that 
\[
\mu \Bigl( \frac{A_1 + d}{2} \cap A_2 \Bigr) > 0. 
\]
\end{lemm}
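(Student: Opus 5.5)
The plan is to reduce the statement to a continuity property of a convolution. The condition $\mu\big(\frac{A_1+d}{2}\cap A_2\big)>0$ is the same as $\mu\big((A_1+d)\cap 2A_2\big)>0$, since the map $t\mapsto 2t$ multiplies Lebesgue measure by $2$. Put $A:=A_1$ and $B:=2A_2$; both have positive measure. Without loss of generality we may replace $A$ and $B$ by bounded subsets of positive measure, because shrinking them only shrinks the intersections. It then suffices to show that the set of shifts
\[
T:=\{d\in\RR : \mu\big((A+d)\cap B\big)>0\}
\]
contains a nonempty open interval. A dense set $D$ meets every nonempty open interval, so this gives the required $d\in D$.

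To find such an interval, consider the function
\[
h(d):=\mu\big((A+d)\cap B\big)=\int_\RR \chi_A(t-d)\,\chi_B(t)\,dt=(\chi_B * \widetilde{\chi_A})(d),
\]
where $\widetilde{\chi_A}(s)=\chi_A(-s)$. Since $\chi_A\in L^1$ and $\chi_B\in L^\infty$, this convolution is continuous. One can see this from the continuity of translation in $L^1$, namely $\|\chi_A(\cdot-d)-\chi_A(\cdot-d')\|_1\to0$ as $d'\to d$, which gives $|h(d)-h(d')|\le\|\chi_A(\cdot-d)-\chi_A(\cdot-d')\|_1$. Moreover, Fubini's theorem yields
\[
\int_\RR h(d)\,dd=\mu(A)\,\mu(B)>0 .
\]
Hence $h$ is not identically zero, so there is $d_0$ with $h(d_0)>0$. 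By continuity, $h>0$ on an open neighbourhood of $d_0$, and this neighbourhood lies in $T$.

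The only genuinely non-formal ingredient is the continuity of $h$, which comes from the continuity of translations in $L^1$ (equivalently, approximating $\chi_A$ by continuous compactly supported functions). This is the step I would take most care over, though it is standard and is in the spirit of Steinhaus' theorem mentioned in the introduction.

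An alternative route is through the Lebesgue Density Theorem. Pick density points $a$ of $A$ and $b$ of $B$, and an interval $J$ around $0$ on which both sets have relative density above $\frac34$ after translating to $a$ and $b$. For shifts $d$ near $b-a$, the sets $A+d$ and $B$ then both occupy more than half of a common short interval, so their intersection has positive measure. Either argument works; I would present the convolution one since it is shorter.
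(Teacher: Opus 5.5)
Your proof is correct, but it follows a different route from the paper's. The rescaling $2\bigl(\frac{A_1+d}{2}\cap A_2\bigr)=(A_1+d)\cap 2A_2$ is sound, and so is the reduction to bounded sets. The estimate $|h(d)-h(d')|\le\|\chi_A(\cdot-d)-\chi_A(\cdot-d')\|_1$ and the Tonelli computation $\int_\RR h=\mu(A)\mu(B)>0$ are also fine. Hence $\{h>0\}$ is a nonempty open set, and it meets $D$.

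The paper instead uses what you list as your alternative. It takes density points $a_k\in A_k$ and a single $\varepsilon>0$ with $\mu(V_k\cap A_k)>\frac32\varepsilon$ for $V_k=[a_k-\varepsilon,a_k+\varepsilon]$. It then picks $d\in D$ with $\bigl|\frac{a_1+d}{2}-a_2\bigr|<\frac{\varepsilon}{4}$, so that $\frac{V_1+d}{2}\subset V_2$. Finally it bounds $\mu\bigl(\frac{A_1+d}{2}\cap A_2\bigr)$ from below by $\mu(V_2\cap A_2)-\mu\bigl(V_2\setminus\frac{V_1+d}{2}\bigr)-\frac12\mu(V_1\setminus A_1)$, which exceeds $\frac{\varepsilon}{4}$. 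It works directly with the halving map rather than rescaling.

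Your convolution argument is essentially the classical proof of Steinhaus' theorem. It is shorter and shows at once that the set of admissible shifts is open, but it relies on Fubini--Tonelli in the plane and on continuity of translation in $L^1$. The paper's argument needs only the one-dimensional Lebesgue Density Theorem and elementary measure bookkeeping. It is quantitative, and it gives an explicit interval of good shifts, namely $|d-(2a_2-a_1)|<\frac{\varepsilon}{2}$. This fits the paper's stated plan of building its regularity results on the Density Theorem and Steinhaus' Theorem.
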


\begin{proof}
As a consequence of the Lebesgue Density Theorem, there exist 
density points in $ A_1 $ and $ A_2 \,$. 
In particular, there exist 
$ a_k \in A_k $ and $ \varepsilon > 0 $ such that 
for the interval 
$ V_k := [a_k - \varepsilon , a_k + \varepsilon ] $ 
we have 
\[
\frac{\mu \left( V_k 
\cap A_k \right)}{2 \varepsilon} > \frac{3}{4} \,,
\qquad 
k=1,2. 
\]
Consequently, 
$ \mu \left( V_1 \setminus A_1 \right) < 
\frac{\varepsilon}{2} \,$. 
Since $ D $ is dense, there exists $ d \in D $ 
such that $ \vert \frac{a_1+d}{2} - a_2 \vert < 
\frac{\varepsilon}{4} \, $, hence 
$ \frac{V_1+d}{2} \subset V_2 $ is fulfilled. 
Some easy calculations show that 
\begin{align*}
\mu \left( \frac{A_1 + d}{2} \cap A_2 \right) 
& \geq
\mu \left( \frac{(A_1 \cap V_1) + d}{2} \cap A_2 \right) \\ 
& = 
\mu \left( \frac{V_1 + d}{2} \cap A_2 \right) - 
\mu \left( \frac{(V_1 \setminus A_1) + d}{2} \cap A_2 \right) \\ 
& = 
\mu \left( V_2 \cap A_2 \right) - 
\mu \left( \left( V_2 \setminus \frac{V_1 + d}{2} \right) 
\cap A_2 \right) - 
\mu \left( \frac{(V_1 \setminus A_1) + d}{2} \cap A_2 \right) \\ 
& \geq 
\mu \left( V_2 \cap A_2 \right) - 
\mu \left( V_2 \setminus \frac{V_1 + d}{2} \right) - 
\mu \left( \frac{(V_1 \setminus A_1) + d}{2} \right) \\ 
& > \frac{3 \varepsilon}{2} - \varepsilon - 
\frac{\varepsilon}{4} = \frac{\varepsilon}{4} > 0. 
\end{align*}
\end{proof}

If $ I \subseteq \RR $ is an interval, 
$ h : I \map \RR $ is a function then the subset 
where $ h $ does not vanish will be called the 
{\em support of $ h $} and we denote it by $ \supp \, h $: 
\[
\supp \, h = \lbrace x \in I : h(x) \neq 0 \rbrace. 
\]
The next proposition contains most of the preliminary 
results which will make it possible to solve 
equation \eqref{eq-auxiliary-sum} under reduced regularity. 

\begin{prop}\label{prop-derivative_sols}
Suppose that the functions 
$ \varphi , \psi_1 : I \map \RR $ 
solve the functional equation 
\[
\varphi \left( \frac{x+y}{2} \right) 
\bigl( \psi_1(x) + \psi_1(y) \bigr) = 
\varphi(x) \psi_1(x) + \varphi(y) \psi_1(y), 
\qquad x,y \in I. 
\]
Let us also suppose that there exist 
differentiable 
functions $ F, g_1 \,, g_2 : I \map \RR $ 
such that $ 0 < g_1'(x) \cdot g_2'(x) $ for all 
$ x \in I $ moreover 
$ \varphi = \frac{1}{2} F' $ and 
$ \psi_1 = \frac{1}{g_1'} - \frac{1}{g_2'} \,$. 

Then $ \varphi, \psi_1 $ have the following properties: 
\begin{enumerate}
\item[(i)] If $ \emptyset \neq L \subseteq I $ is an open 
interval, $ c \in \RR $ is an arbitrary number, and 
\[ 
\mu \bigl( \lbrace x \in L : \varphi(x) \neq c \rbrace \bigr) = 0 
\] 
then $ \varphi(x) = c $ for all $ x \in L $. 
\item[(ii)] If $ \emptyset \neq L \subseteq I $ is an open 
interval and 
$ \mu \left( \supp \, \psi_1 \cap L \right) = 0 $ 
then $ \psi_1(x) = 0 $ for all $ x \in L $. 
\item[(iii)] If $ x,y \in I $ such that $ \psi_1(x) > 0 $ and 
$ \psi_1(y) < 0 $ then there exists 
\[
z \in \left] \min(x,y) \, , \, \max(x,y) \right[ 
\mbox{ such that } \psi_1(z) = 0. 
\] 
\item[(iv)] Let $ x,y \in I $ such that 
$ \psi_1(x) \psi_1(y) > 0 $ and 
let $ J \subseteq \RR $ be an interval containing at least 
one point. Suppose that 
$ \varphi(x), \varphi(y) \in J $. 
Then 
$ \varphi \left( \frac{x+y}{2} \right) \in J $ as well. 
\end{enumerate}

\end{prop}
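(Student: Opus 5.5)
The four parts use different tools. Parts (i) and (ii) follow from properties of derivatives. Part (iii) is a Darboux-type argument applied to the reciprocals. Part (iv) is a direct algebraic consequence of the equation.

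For (i), set $H:=\frac12F-cx$ on $L$. Then $H$ is differentiable and $H'=\varphi-c$ vanishes almost everywhere on $L$. I will use the classical fact that a function differentiable everywhere on an interval, whose derivative is zero a.e., is constant there. One route is Lebesgue's theorem: an everywhere differentiable function maps null sets to null sets (Luzin $N$). Hence $H(L)$ is the union of $H(\{H'=0\})$ and the image of a null set. The first set is null by the standard lemma $\mu^*(H(E))\le \sup_E|H'|\,\mu^*(E)$ applied with arbitrarily small bounds. So $H(L)$ is a null interval, hence a point. Thus $H'\equiv0$ on $L$, i.e. $\varphi\equiv c$ on $L$.

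For (ii), the hypothesis says $\frac1{g_1'}=\frac1{g_2'}$, i.e. $g_1'=g_2'$, almost everywhere on $L$. Then $g_1-g_2$ is everywhere differentiable with derivative zero a.e. By the argument of (i), $g_1-g_2$ is constant on $L$. Hence $g_1'=g_2'$ everywhere on $L$, so $\psi_1\equiv0$ there.

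For (iii), we may assume $g_1',g_2'>0$; the negative case is symmetric. Then $\psi_1(z)>0$ iff $g_1'(z)<g_2'(z)$. So $h:=g_1'-g_2'$ satisfies $h(x)<0<h(y)$. Since $h$ is the derivative of $g_1-g_2$, Darboux's theorem gives $z$ strictly between $x$ and $y$ with $h(z)=0$, i.e. $\psi_1(z)=0$.

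For (iv), divide the equation by $\psi_1(x)+\psi_1(y)\neq0$, which is nonzero because both summands have the same sign. This gives
\[
\varphi\Bigl(\tfrac{x+y}2\Bigr)=\lambda\varphi(x)+(1-\lambda)\varphi(y),\qquad \lambda:=\frac{\psi_1(x)}{\psi_1(x)+\psi_1(y)}\in\left]0,1\right[.
\]
So $\varphi(\frac{x+y}2)$ is a convex combination of $\varphi(x)$ and $\varphi(y)$. It therefore lies in the interval $J$ by convexity.

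The only real obstacle is the measure-theoretic step in (i) and (ii): establishing rigorously that an everywhere differentiable function whose derivative vanishes a.e. is constant. I would cite this (e.g. via the Luzin $N$ property of everywhere-differentiable functions) rather than reprove it.
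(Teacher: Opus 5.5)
Your proposal is correct and essentially the paper's proof. Parts (i) and (ii) reduce, as in the paper, to the fact that an everywhere differentiable function whose derivative vanishes almost everywhere is constant: the paper simply cites Rudin's Theorem 7.21 instead of the Luzin $N$ sketch you give. Part (iii) is the same Darboux argument applied to $(g_2-g_1)'$. In (iv), your formula $\varphi(\frac{x+y}2)=\lambda\varphi(x)+(1-\lambda)\varphi(y)$ with $\lambda\in\left]0,1\right[$ is a tidier form of the paper's reduction to half-lines and its sign case analysis, and it covers every interval $J$ at once.
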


\begin{proof} 
{\em Part (i):} 
It is well known, that if the derivative of a differentiable 
function is zero almost everywhere then the derivative 
is zero everywhere (cf. \cite[Theorem 7.21.]{Rud87}). 
The assumption in {\em (i)} is that the 
derivative of the function 
$ \tilde{F}(x) := \frac{1}{2} F(x) - cx $ 
is zero almost everywhere on $ L $. But then 
$ \tilde{F}'(x) = 0 $ for all $ x \in L $. That is, 
$ \varphi(x) = c $ for all $ x \in L $. 

{\em Part (ii):} Observe that $ \psi_1(x) = 0 $ means 
$ \frac{1}{g_1'(x)} - \frac{1}{g_2'(x)} = 0 $. 
Multiplying by $ g_1'(x) g_2'(x) $ this is equivalent to 
\[
0 = g_2'(x) - g_1'(x) = \left( g_2 - g_1 \right)'(x). 
\]
Thus the assumption of {\em (ii)} means that the derivative 
$ \left( g_2 - g_1 \right)' $ is zero almost everywhere 
on $ L $. As before, this implies $ g_2'(x) = g_1'(x) $ 
for every $ x \in L $, so 
$ \psi_1(x) = 0 $ for every $ x \in L $. 

{\em Part (iii):} Now  $ \psi_1(x) > 0 $ means 
$ \frac{1}{g_1'(x)} - \frac{1}{g_2'(x)} > 0 $. 
Multiplying by $ g_1'(x) g_2'(x) > 0 $ 
this is equivalent to 
\[
0 < g_2'(x) - g_1'(x) = \left( g_2 - g_1 \right)'(x). 
\]
Similarly, $ \psi_1(y) < 0 $ is equivalent to 
\[
0 > g_2'(y) - g_1'(y) = \left( g_2 - g_1 \right)'(y). 
\]
However, a derivative is a Darboux function. In particular, 
there exists 
\[ 
z \in \left] \, \min(x,y) \, , \, \max(x,y) \, \right[ 
\] 
such that 
$ \left( g_2 - g_1 \right)'(z) = 0 $, 
but once again, this is equivalent to 
$ \psi_1 (z) = 0 $. 

{\em Part (iv):} 
Clearly, it is enough to prove the statement for 
intervals of the following type: 
\[ 
J \in \big\lbrace \, 
\left] -\infty , \alpha \right[ \,,\, 
\left] -\infty , \alpha \right] \,,\, 
\left] \alpha , +\infty \right[ \,,\, 
\left[ \alpha , +\infty \right[ \, 
\big\rbrace , 
\] 
where $ \alpha $ is a given real number. 
Indeed, every interval $ J \neq \RR $ 
can be written as the intersection 
of two intervals of the above kind, while for $ J = \RR $ 
the statement is trivial. 
We detail the proof only for 
the first case, the other three are analogous. 
Assume that $ \varphi(x) < \alpha $ and 
$  \varphi(y) < \alpha $. 
Now we have two possibilities. Either $ \psi_1(x), \psi_1(y) $ are 
both positive, or they are both negative. 
If $ \psi_1(x) > 0 $, $ \psi_1(y) > 0 $ then 
$  \varphi(x) \psi_1(x) < \alpha \psi_1(x) $ and 
$  \varphi(y) \psi_1(y) < \alpha \psi_1(y) $. 
Hence 
\begin{align*}
\varphi \left( \frac{x+y}{2} \right) 
\bigl( \psi_1(x) + \psi_1(y) \bigr) & = 
\varphi(x) \psi_1(x) +  \varphi(y) \psi_1(y) \\ 
& < \alpha \psi_1(x) + \alpha \psi_1(y) 
= \alpha \left( \psi_1(x) + \psi_1(y) \right). 
\end{align*}
Dividing by the positive number 
$ \left( \psi_1(x) + \psi_1(y) \right) $ we get that 
$ \varphi \left( \frac{x+y}{2} \right) < \alpha $. 

In the other case, when 
$ \psi_1(x) < 0 $, $ \psi_1(y) < 0 $, we have 
$ \varphi(x) \psi_1(x) > \alpha \psi_1(x) $ and 
$ \varphi(y) \psi_1(y) > \alpha \psi_1(y) $. 
Hence 
\begin{align*}
\varphi \left( \frac{x+y}{2} \right) 
\bigl( \psi_1(x) + \psi_1(y) \bigr) & = 
\varphi(x) \psi_1(x) +  \varphi(y) \psi_1(y) \\ 
& > \alpha \psi_1(x) + \alpha \psi_1(y) 
= \alpha \left( \psi_1(x) + \psi_1(y) \right). 
\end{align*}
Dividing by the negative number 
$ \left( \psi_1(x) + \psi_1(y) \right) $ 
we again arrive to 
$ \varphi \left( \frac{x+y}{2} \right) < \alpha $. 
So the proof is done for 
$ J = \left] -\infty , \alpha \right[ \,$. Following 
the previous remarks, the completion of the proof for 
the general claim is left to the reader. 
\end{proof}

Our most important result of this section 
is the following statement. 

\begin{thm}\label{thm-add_equation_contsol}
Suppose that the functions 
$ \varphi, \psi_1 : I \map \RR $ 
solve the functional equation 
\[
\varphi \left( \frac{x+y}{2} \right) 
\bigl( \psi_1(x) + \psi_1(y) \bigr) = 
\varphi(x) \psi_1(x) + \varphi(y) \psi_1(y), 
\qquad x,y \in I. 
\]
Let us suppose that there exist 
differentiable 
functions $ F, g_1 \,, g_2 : I \map \RR $ 
such that $ 0 < g_1'(x) \cdot g_2'(x) $ for all 
$ x \in I $ moreover 
$ \varphi = \frac{1}{2} F' $ and 
$ \psi_1 = \frac{1}{g_1'} - \frac{1}{g_2'} \,$. 
Let us also assume that there is no nonempty 
open interval $ L \subseteq I $ such that 
$ \varphi $ is constant on $ L $. 

Then either $ \psi_1 $ is identically zero, or 
$ 0 \notin \psi_1(I) $ and 
$ \varphi, \psi_1 $ are continuous functions.     
\end{thm}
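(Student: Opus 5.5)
Assume $\psi_1$ is not identically zero; the plan is to show first that $\psi_1$ has no zeros, and then that both $\varphi$ and $\psi_1$ are continuous.

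\emph{Step 1: $Z:=\{x\in I:\psi_1(x)=0\}$ is empty.} Suppose not. Two facts will be combined.

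First, $\psi_1$ has constant sign on every open interval inside $I\setminus Z$. This is Proposition \ref{prop-derivative_sols} (iii).

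Second, $Z$ has empty interior. Take $x\in Z$ and $y\in\supp\psi_1$. The equation gives $\varphi\big(\frac{x+y}2\big)\psi_1(y)=\varphi(y)\psi_1(y)$, so $\varphi\big(\frac{x+y}2\big)=\varphi(y)$. If $Z$ contained an interval $L$, then for fixed $y\in\supp\psi_1$ the function $\varphi$ would be constant on $\frac12(L+y)$. That contradicts the hypothesis that $\varphi$ is nowhere constant.

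Part (ii) of Proposition \ref{prop-derivative_sols} then shows that $\supp\psi_1$ meets every subinterval in positive measure. Hence $\supp\psi_1$ is measurable-large and dense, while $Z$ is nonempty.

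To reach a contradiction, I would use the relation $\varphi\big(\frac{z+y}2\big)=\varphi(y)$ for $z\in Z$ and $y\in\supp\psi_1$ more fully. Let $z_1<z_2$ be two distinct points of $Z$ (if they exist). Then $\varphi(t)=\varphi(2t-z_1)=\varphi(2t-z_2)$ on large sets, which gives a "periodicity" $\varphi(s)=\varphi(s+(z_2-z_1))$ on a set of positive measure. Iterating this with Lemma \ref{lemm-positive_measure_trns}, I would try to show that $\varphi$ is essentially constant on some interval, and then conclude by part (i) of Proposition \ref{prop-derivative_sols}.

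If $Z$ is a single point $z$, then $\psi_1$ has one sign on each side of $z$. I would use the relation $\varphi(y)=\varphi\big(\frac{z+y}2\big)$ on a neighbourhood. It forces $\varphi(y)=\varphi\big(z+2^{-n}(y-z)\big)$ for all $n$. Together with the Darboux property of $\varphi$ and part (iv), which makes $\varphi$ "midpoint-convex in range" on sign-constant parts, this should make $\varphi$ constant near $z$ on a one-sided interval. That is a contradiction. I expect this step to be the main obstacle, and the Steinhaus/density tools are what I would try first.

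\emph{Step 2: continuity.} Now $\psi_1$ has no zeros, so by (iii) it has constant sign, say positive. Then (iv) holds for all $x,y\in I$: every interval $J$ is preserved under midpoints of $\varphi$-values. With $J=[\min(\varphi(x),\varphi(y)),\max(\varphi(x),\varphi(y))]$, the value $\varphi$ at the midpoint lies between. Combined with the Darboux property of the derivative $\varphi$, this should give monotonicity of $\varphi$. The idea is that if $\varphi$ were not monotone, a level set argument with $J$ a half-line produces a midpoint-convex sublevel structure. Alternatively, $\{\varphi<\alpha\}$ and $\{\varphi>\alpha\}$ are midpoint-convex. Since they are measurable (derivatives are Borel), Steinhaus-type arguments make them intervals up to measure zero, and then exactly intervals. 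So $\varphi$ is monotone, and a monotone Darboux function is continuous.

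Finally, solve the equation for $\psi_1$ via ratios. Fix $y$; for $x$ near $y$ with $\varphi(x)\ne\varphi\big(\frac{x+y}2\big)$ (true densely, by strict monotonicity, which follows from non-constancy) we get
\[\psi_1(x)=\psi_1(y)\,\frac{\varphi(y)-\varphi\big(\frac{x+y}2\big)}{\varphi\big(\frac{x+y}2\big)-\varphi(x)}.\]
The right-hand side is continuous in $x$, which yields continuity of $\psi_1$.
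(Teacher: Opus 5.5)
\paragraph{Verdict.} There is a genuine gap, and it is in Step~1.

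\paragraph{What is fine.} Your Step~2 and the closing formula for $\psi_1$ are sound. Midpoint-convexity of the measurable sets $\{\varphi<\alpha\}$ and $\{\varphi>\alpha\}$, together with Steinhaus, does give monotonicity of $\varphi$. This is a legitimate variant of the paper's route, which proves injectivity of $\varphi$ instead and then uses that an injective Darboux function is continuous.

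\paragraph{The gap: Step~1.} Step~1, the proof that $Z=\emptyset$, is the real content of the theorem, and you leave it at the level of ``periodicity'' and ``this should make $\varphi$ constant''. Two ideas are missing.

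\paragraph{First missing idea: $Z$ nonempty but not dense.} This case includes your single-point case. Here you already have the right tool but apply it in the wrong order. Your Step~2 argument only needs $\psi_1$ to have constant sign. By Proposition~\ref{prop-derivative_sols}(iii) it therefore works on every open interval $K\subseteq\supp\,\psi_1$, and makes $\varphi$ strictly monotone, hence injective, on $K$. Now take such a $K\neq I$ maximal. Zeros of $\psi_1$ accumulate at an endpoint of $K$ lying in $I$, so there are $z\in Z$ and $y\in K$ with $\frac{z+y}{2}\in K$. Then $\varphi\bigl(\frac{z+y}{2}\bigr)=\varphi(y)$ contradicts injectivity. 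This is the paper's Part~4; no periodicity is needed.

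\paragraph{Second missing idea: $Z$ dense.} Then $\supp\,\psi_1$ contains no interval, so (iv) can only be applied on the non-interval sets $N=\{\psi_1<0\}$ and $P=\{\psi_1>0\}$. A relation $\varphi(s)=\varphi(s+z_2-z_1)$ on some set of positive measure does not by itself force constancy on an interval. The paper splits according to $\mu(Z)$.
\begin{enumerate}
\item If $\mu(Z)>0$, Lemma~\ref{lemm-positive_measure_trns} yields $y_0\in\supp\,\psi_1$ for which $C_0=\frac{Z+y_0}{2}\cap N$ (say) has positive measure. Then $\varphi\equiv\varphi(y_0)$ on $C_0$. Applying (iv) with $J=\{\varphi(y_0)\}$ propagates this to $\frac{C_0+C_0}{2}$, and Steinhaus gives an interval of constancy.
\item If $\mu(Z)=0$, (iv) and Steinhaus show that whenever $\varphi^{-1}(J)\cap L$ has positive measure, it contains an open interval $M$. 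Two disjoint intervals $J,\widetilde J$ cannot both have this property, with $\widetilde J$ giving an interval $\widetilde M$. Indeed, density of $Z$ and of $\supp\,\psi_1$ gives $x\in Z$ and $y\in M\cap\supp\,\psi_1$ with $\frac{x+y}{2}\in\widetilde M$. Then $\varphi\bigl(\frac{x+y}{2}\bigr)=\varphi(y)\in J$, which is impossible since $\widetilde M\subseteq\varphi^{-1}(\widetilde J)$. Hence $\varphi$ is a.e.\ constant on $L$, and (i) finishes.
\end{enumerate}

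\paragraph{About your periodicity idea.} It could be completed in the sub-case $\mu(Z)=0$. There the relation holds almost everywhere for the dense set of shifts $Z-Z$, and a measurable function with a dense set of a.e.-periods is a.e.\ constant. It does not work when $\mu(Z)>0$, because the relation then only holds on $\supp\,\psi_1\cap(\supp\,\psi_1-d)$.

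\paragraph{Consequence.} Your Step~2 presupposes $Z=\emptyset$, so without these arguments the theorem is not proved.
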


\begin{proof}
In order to make the proof easier to comprehend we 
divide the reasoning into four parts. 

{\em Part 1.)} 
In the first step we prove that if there exists 
an open interval $ \emptyset \neq L \subset I $ 
such that $ \psi_1(x) = 0 $ for all $ x \in L $ then 
$ \psi_1(x) = 0 $ must hold for all $ x \in I $ as well. 
Otherwise, there would exist $ y_0 \in I \setminus L $ 
such that $ \psi_1(y_0) \neq 0 $. Then, for all $ x \in L $, 
equation \eqref{eq-auxiliary-sum} yields 
\[
\varphi \left( \frac{x+y_0}{2} \right) 
\psi_1(y_0) = \varphi(y_0) \psi_1(y_0). 
\]
Here we can divide by $ \psi_1(y_0) \neq 0 $ 
and get that 
$ \varphi \left( \frac{x+y_0}{2} \right) = \varphi(y_0) $. 
That is, $ \varphi $ would be constant on the interval 
$ \frac{L + y_0}{2} $ which is excluded by the assumptions. 
Thus $ \psi_1(x) = 0 $ for all $ x \in I $. 

{\em Part 2.)} 
Therefore, from now on, let us consider 
the case that $ \supp \, \psi_1 $ is dense in $ I $. 
In fact, much more is true. Let us observe that for any 
open interval $ \emptyset \neq L \subseteq I $ 
the set $ \supp \, \psi_1 \cap L $ 
has positive measure, that is, 
\[
\mu \bigl( \lbrace x \in L : 
\psi_1(x) \neq 0 \rbrace \bigr) > 0. 
\] 
Otherwise, assertion {\em (ii)} of 
Proposition \ref{prop-derivative_sols}, 
would imply that $ \psi_1 \vert _L \equiv 0 $ contradicting 
the fact that $ \supp \, \psi_1 $ is dense. 

{\em Part 3.)} 
In the next part we will prove that 
if $ \psi_1 $ is not identically zero then 
the set of zeros of $ \psi_1 $ cannot be dense in $ I $. 
Let us assume the contrary, namely that 
\[
Z := \lbrace x \in I : \psi_1(x) = 0 \rbrace 
\mbox{ is dense in } I. 
\]
As we have seen in {\em Part 1}, for all $ x \in Z $ 
and for all $ y \in \supp \, \psi_1 $ 
we have 
\begin{equation}\label{eq-one_term_gzero}
\varphi \left( \frac{x+y}{2} \right) 
\psi_1(y) = \varphi(y) \psi_1(y). 
\end{equation}
If we divide by $ \psi_1(y) \neq 0 $ , we get that 
$ \varphi $ is constant on the set 
$ \frac{Z + y}{2} $ with value $ \varphi(y) $. 
Now we distinguish two cases: \\ 
{\em Case 1.} 
Suppose that $ \mu (Z) > 0 $. 
Consider a partition of $ I $ consisting of 
pairwise disjoint intervals such that all of them 
has the same length $ \gamma < \frac{\mu (Z)}{3} \,$. 
Because of the $ \sigma $-additivity of the measure, 
there is an interval $ L $ in this partition such that 
$ \inf I < \inf L =: \alpha $, 
$ \beta := \sup L < \sup I $ and 
$ \mu \left( L \cap Z \right) > 0 $ (otherwise 
$ \mu (Z) \leq \frac{2 \mu (Z)}{3} $ would hold). 
The conditions for $ \alpha, \beta $ imply that 
the inclusion 
\[
L_0 := \left] \, 2\alpha - \beta 
, 2\beta - \alpha \, \right[ \subset I 
\]
holds as well. 
Furthermore, in {\em Part 2} of the proof we 
have established that at least one of the sets 
\[
N = \lbrace x \in L : 
\psi_1(x) < 0 \rbrace 
\ \mbox{ and } \ 
P = \lbrace x \in L : 
\psi_1(x) > 0 \rbrace 
\]
has positive measure. We consider the case 
$ \mu (N) > 0 $, while $ \mu (P) > 0 $ can be 
handled analogously. 
Now let us apply 
Lemma \ref{lemm-positive_measure_trns} 
for the sets $ L \cap Z $ and $ N $ which have 
positive measure, and for the dense set 
$ D := ( \supp \, \psi_1 \cap L_0) 
\cup \left( \RR \setminus L_0 \right) $. 
According to that, there exists 
$ y_0 \in D $ such that 
\[
\mu \left( \frac{(L \cap Z) + y_0}{2} \cap N \right) > 0. 
\] 
Since $ L $ is the middle third of the interval $ L_0 $, 
it is clear that $ y_0 \in \supp \, \psi_1 \cap L_0 \, $. 
By equation \eqref{eq-one_term_gzero}, 
we have that $ \varphi $ is constant with value 
$ \varphi(y_0) $ 
on the positive measure set 
$ C_0 = \frac{Z + y_0}{2} \cap N $. 
Now let us apply assertion {\em (iv)} of 
Proposition \ref{prop-derivative_sols} with 
the singleton interval $ J = \lbrace \varphi(y_0) \rbrace $. 
Consequently, $ \varphi $ is constant with value 
$ \varphi(y_0) $ on the set $ \frac{C_0 + C_0}{2} $. 
But the Theorem of Steinhaus \cite{Ste20} claims that 
$ \frac{C_0 + C_0}{2} $ contains an open interval. 
So $ \varphi $ is constant on an open interval, however 
this contradicts the assumptions of our theorem. \\ 
{\em Case 2.} 
Thus in the second case we have to consider 
$ \mu (Z) = 0 $. 
Let $ \emptyset \neq J \subseteq \RR $ 
be an arbitrary interval such that 
$ \mu \left( \varphi^{-1}(J) \right) > 0 $. 
Similarly as in the previous case, there exists 
a nonempty open interval 
$ \left] \, \alpha , \beta \, \right[ = L \subset I $ 
such that $ \mu \left( \varphi^{-1}(J) \cap L \right) > 0 $ 
and 
\[
L_0 = \left] \, 2\alpha - \beta 
, 2\beta - \alpha \, \right[ \subset I. 
\]
What is more, since $ \mu (Z) = 0 $, at least one of the sets 
\[
N_J := \varphi^{-1}(J) \cap L \cap N 
\ \mbox{ and } \ 
P_J := \varphi^{-1}(J) \cap L \cap P 
\]
has positive measure where 
$ N = \lbrace x \in L : \psi_1(x) < 0 \rbrace $ 
and 
$ P = \lbrace x \in L : \psi_1(x) > 0 \rbrace $. 
We discuss the first possibility, when 
$ \mu \bigl( \varphi^{-1}(J) \cap L \cap N  \bigr) > 0 $, 
the other one is analogous. 
Just as in the previous case, 
applying assertion {\em (iv)} of 
Proposition \ref{prop-derivative_sols} for $ J $, 
we get that $ \frac{N_J + N_J}{2} \subseteq \varphi^{-1}(J) $. 
Due to the Theorem of Steinhaus, 
there exists a nonempty open interval 
$ M \subset \frac{N_J + N_J}{2} 
\subseteq \varphi^{-1}(J) \cap L $. 

Let us emphasize that $ J \subseteq \RR $ 
was arbitrary. 
Hence, if $ \widetilde{J} \subseteq \RR $ 
is another interval such that 
$ \widetilde{J} \cap J = \emptyset $ but 
$ \mu \left( \varphi^{-1}(\widetilde{J}) \cap L \right) > 0 $ 
then there exists another nonempty open interval 
$ \widetilde{M} \subseteq 
\varphi^{-1}(\widetilde{J}) \cap L $. 
Obviously, $ M \cap \widetilde{M} = \emptyset $. 
Since $ Z $ and $ \supp \, \psi_1 $ are dense in $ I $, 
there exists $ x \in Z \cap L_0 $ and 
$ y \in \supp \, \psi_1 \cap M $ such that 
$ \frac{x + y}{2} \in \widetilde{M} $. 
Equation \eqref{eq-one_term_gzero} yields 
\[
\varphi \left( \frac{x + y}{2} \right) = \varphi(y) \in J. 
\]
That is, 
$ \frac{x + y}{2} \in M \cap \widetilde{M} = \emptyset $, 
a contradiction. Summarizing these observations, two 
disjoint intervals cannot have preimages in $ L $ 
with positive measure simultaneously. 
In particular, for every $ a \in \RR $, 
the measure of the set 
$ \varphi^{-1} \vert _L 
\left( \, \left] - \infty , a \right] \, \right) $ 
is either $ 0 $ or $ \mu(L) $. 
Defining the real number 
\[
c := \inf \lbrace a \in \RR : 
\mu \bigl( 
\varphi^{-1} \vert _L 
\left( \, \left] - \infty , a \right] \, \right) 
\bigr) > 0 \rbrace 
\] 
it is clear that 
$ \mu \bigl( \varphi^{-1} \vert _L \left( c \right) \bigr) 
= \mu (L) $. 
According to assertion {\em (i)} of 
Proposition \ref{prop-derivative_sols} 
this implies that $ \varphi $ is constant on the whole 
interval $ L $ which is again impossible. 

This contradiction concludes the third part of the proof. 

{\em Part 4.)} 
In the final part we deal with the only remaining 
situation, namely when $ Z $ is not dense. 
Let $ K $ denote an arbitrary nonempty 
open subinterval of $ \supp \, \psi_1 $. According to 
claim {\em (iii)} of Proposition \ref{prop-derivative_sols}, 
$ \psi_1 $ does not change its sign on $ K $. 
Suppose that $ v := \varphi(x) = \varphi(y) $ 
for some points 
$ x,y \in K $, $ x \neq y $. Then we shall apply 
claim {\em (iv)} of Proposition \ref{prop-derivative_sols} 
for $ J = \lbrace v \rbrace $ in order to see that 
$ \varphi^{-1}(v) $ is closed under taking the arithmetic mean, 
so it must be dense in $ \left[ x,y \right] $. 
On the other hand, at least one of the sets 
\[
V_{+} := \lbrace \, t \in \left[ x,y \right] : 
\varphi(t) > v \, \rbrace 
\ \mbox{ and } \ 
V_{-} := \lbrace \, t \in \left[ x,y \right] : 
\varphi(t) < v \, \rbrace 
\]
must have positive measure, otherwise $ \varphi $ would be 
constant on $ \left] \, x,y \, \right[ $ (see {\em (i)} 
in Proposition \ref{prop-derivative_sols}). 
For instance, if $ \mu \left( V_{+} \right) > 0 $ then 
claim {\em (iv)} of Proposition \ref{prop-derivative_sols} 
implies that $ \frac{V_{+} + V_{+}}{2} \subseteq V_{+} $. 
Due to Steinhaus' Theorem, this average set contains an interval. 
But this contradicts the fact that $ \varphi^{-1}(v) $ 
is dense in $ \left[ x,y \right] $. 
This means that $ \varphi $ is injective on $ K $. 

In the next step let us check that $ \psi_1 $ is 
nowhere zero. Let us suppose its contrary, 
namely that $ K \neq I $ 
is a maximal open interval in $ \supp \, \psi_1 $. 
Since $ K \neq I $, there exists 
$ z \in (I \setminus K) \cap Z $ and $ y \in K $ 
such that $ \frac{z+y}{2} \in K $. 
Now equation \eqref{eq-one_term_gzero} 
implies $ \varphi \left( \frac{z+y}{2} \right) = \varphi(y) $ 
which contradicts the injectivity of $ \varphi $ on $ K $. 
Thus $ I = K $, so $ \supp \, \psi_1 = I $ and $ \varphi $ 
is injective. 

Using that $ \varphi $ is a derivative, it follows that 
$ \varphi $ is a Darboux function. Together with the 
injectivity this implies that $ \varphi $ is continuous 
(see \cite{GF66}) 
and strictly monotone. 
Finally, after fixing an arbitrary $ y_0 \in I $, 
rearranging equation \eqref{eq-auxiliary-sum} 
and expressing $ \psi_1 $ we obtain 
\[
\psi_1(x) = 
\frac{\varphi(y_0)\psi_1(y_0) - 
\varphi \left( \frac{x+y_0}{2} \right) \psi_1(y_0)}{
\varphi \left( \frac{x+y_0}{2} \right) - \varphi(x)} 
\]
for all $ x \in I $. 
The strict monotonicity of $ \varphi $ ensures that this formula 
indeed makes sense. Moreover, the continuity of $ \varphi $ 
clearly implies that $ \psi_1 $ is continuous as well. 
\end{proof}

This theorem makes possible to determine 
the solutions of the 
system of functional equations \eqref{eq-derivatives_system}. 
With the help of Theorem \ref{thm-add_equation_contsol}
we are able to utilize the results in the literature 
concerning the equations of 
\eqref{eq-derivatives_system}
under the weaker regularity assumptions in our setting. 

\begin{thm}\label{thm-sols_of_derivative_system}
Let 
$ F, f_1 \,, f_2 \,, g_1 \,, g_2 : I \map \RR $ 
be differentiable functions such that 
$ g_1'(x) \cdot g_2'(x) > 0 $ for every $ x \in I $ 
and $ F $ is not affine on any open subinterval 
$ \emptyset \neq L \subseteq I $. 
Suppose that the functions 
\[
\varphi := \frac{1}{2}F' \,, \qquad 
\psi_k := \frac{1}{g_1'} + (-1)^k \frac{1}{g_2'} \,, \qquad
\Psi_k := -\frac{f_1'}{g_1'} -(-1)^k \frac{f_2'}{g_2'} \,, 
\qquad k = 1,2
\]
solve the system of functional equations 
\eqref{eq-derivatives_system}, namely 
\begin{equation*}
\begin{cases}
\varphi \left( \frac{x+y}{2} \right) 
\left( \psi_1(x) + \psi_1(y) \right) = 
\Psi_1(x) + \Psi_1(y) \\ 
\varphi \left( \frac{x+y}{2} \right) 
\left( \psi_2(x) - \psi_2(y) \right) = 
\Psi_2(x) - \Psi_2(y), 
\end{cases} 
\qquad x,y \in I. 
\end{equation*} 
Then $ \Psi_1 = \varphi \cdot \psi_1 $, moreover 
there exist constants 
$ a , b , c , d , \gamma , \lambda, \nu \in \RR $ 
such that $ ad - bc \neq 0 $ and exactly 
one of the following seven cases holds: 

\begin{enumerate}
\item[\textit{(1.1)}] $ \gamma < 0 $ and for all $ x \in I $ 
\begin{alignat*}{3}
\varphi(x) &= 
\frac{c \sin(\kappa x) + d \cos(\kappa x)}{a \sin(\kappa x) 
+ b \cos(\kappa x)} \, , \qquad 
& \psi_1(x) &= a \sin(\kappa x) + b \cos(\kappa x) \neq 0 , \\ 
\psi_2(x) &= -a \cos(\kappa x) + b \sin(\kappa x) + \lambda 
\, , \qquad 
& \Psi_2(x) &= -c \cos(\kappa x) + d \sin(\kappa x) + \nu; 
\mbox{ or } 
\end{alignat*} 
\item[\textit{(1.2)}] $ \gamma = 0 $ and for all $ x \in I $ 
\begin{alignat*}{3}
\varphi(x) & = 
\frac{cx + d}{ax + b} \, , \qquad 
& \psi_1(x) &= ax + b \neq 0 , \\ 
\psi_2(x) &= \frac{1}{2} a x^2 + b x + \lambda 
\, , \qquad 
& \Psi_2(x) &= \frac{1}{2} c x^2 + d x + \nu ; 
\mbox{ or } 
\end{alignat*}
\item[\textit{(1.3)}] 
$ \gamma > 0 $ and for all $ x \in I $ 
\begin{alignat*}{3}
& \varphi(x) = 
\frac{c \sinh(\kappa x) + d \cosh(\kappa x)}{a \sinh(\kappa x) 
+ b \cosh(\kappa x)} \, , \qquad 
& \psi_1(x) &= a \sinh(\kappa x) + b \cosh(\kappa x) \neq 0 , \\ 
& \psi_2(x) = a \cosh(\kappa x) + b \sinh(\kappa x) + \lambda 
\, , \qquad 
& \Psi_2(x) &= c \cosh(\kappa x) + d \sinh(\kappa x) + \nu; 
\mbox{ or } 
\end{alignat*}  
\item[\textit{(2)}]
$ \psi_1(x) = \Psi_1(x) = 0 $ , 
$ \psi_2 (x) = a \neq 0 $ and $ \Psi_2 (x) = b $ 
for all $ x \in I $, moreover $ \varphi $ is arbitrary; or 
\item[\textit{(3.1)}] 
$ \gamma < 0 $ and for all $ x \in I $  
\begin{alignat*}{3} 
& \varphi(x) = 
\frac{c \sin(\kappa x) + d \cos(\kappa x)}{a \sin(\kappa x) 
+ b \cos(\kappa x)} \, , \qquad 
& \psi_1(x) & = \Psi_1(x) = 0 , \\ 
& \psi_2(x) = 
-a \cos(\kappa x) + b \sin(\kappa x) + \lambda \, , \qquad 
& \Psi_2(x) & = 
-c \cos(\kappa x) + d \sin(\kappa x) + \nu;  
\mbox{ or } 
\end{alignat*} 
\item[\textit{(3.2)}] 
$ \gamma = 0 $ and for all $ x \in I $  
\begin{alignat*}{3}
& \varphi(x) = 
\frac{cx + d}{ax + b} \, , \qquad 
& \psi_1(x) & = \Psi_1(x) = 0 , \\ 
& \psi_2(x) = \frac{1}{2} a x^2 + b x + \lambda \, , \qquad 
& \Psi_2(x) & = \frac{1}{2} c x^2 + d x + \nu ; 
\mbox{ or } 
\end{alignat*} 
\item[\textit{(3.3)}] 
$ \gamma > 0 $ and for all $ x \in I $  
\begin{alignat*}{3} 
& \varphi(x) = 
\frac{c \sinh(\kappa x) + d \cosh(\kappa x)}{a \sinh(\kappa x) 
+ b \cosh(\kappa x)} \, , \qquad 
& \psi_1(x) & = \Psi_1(x) = 0 , \\ 
& \psi_2(x) = 
a \cosh(\kappa x) + b \sinh(\kappa x) + \lambda \, , \qquad 
& \Psi_2(x) & = 
c \cosh(\kappa x) + d \sinh(\kappa x) + \nu; 
\end{alignat*} 

\end{enumerate}
where $ \kappa = \sqrt{ \vert \gamma \vert } $. 
Conversely, 
if $ \Psi_1 = \varphi \cdot \psi_1 $ then 
in each of the above cases 
the functions $ \varphi, \psi_k \,, \Psi_k $ 
solve the system \eqref{eq-derivatives_system}. 
\end{thm}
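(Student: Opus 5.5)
First, $\Psi_1=\varphi\cdot\psi_1$ comes from setting $y=x$ in the first equation of \eqref{eq-derivatives_system}. The first equation then becomes \eqref{eq-auxiliary-sum}. The hypotheses of Theorem \ref{thm-add_equation_contsol} hold: $\varphi=\frac12F'$, $\psi_1=\frac1{g_1'}-\frac1{g_2'}$, $g_1'g_2'>0$, and $\varphi$ is constant on no open subinterval because $F$ is nowhere affine. That theorem leaves two cases. Either $\psi_1\equiv 0$ on $I$, or $0\notin\psi_1(I)$ and both $\varphi$ and $\psi_1$ are continuous.

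\emph{Case $\psi_1\equiv0$.} Here $\Psi_1\equiv0$ and $g_1'=g_2'$, so $\psi_2=\frac{2}{g_1'}$ never vanishes. We must solve the second equation, $\varphi(\frac{x+y}2)(\psi_2(x)-\psi_2(y))=\Psi_2(x)-\Psi_2(y)$, with $\varphi$ a derivative that is nowhere constant.
\begin{itemize}
\item If $\psi_2$ is constant, then $\Psi_2$ is constant. This is case (2), with $a=\psi_2\neq0$.
\item Otherwise, we want to reach the continuity needed to apply the Kiss--Páles result \cite{KP19}, which then yields the forms (3.1)--(3.3).
\end{itemize}
To obtain continuity, fix $y_1,y_2$ with $\psi_2(y_1)\neq\psi_2(y_2)$, or more generally exploit the structure of the equation. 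Subtracting the equation for the pairs $(x,y_1)$ and $(x,y_2)$ relates $\varphi(\frac{x+y_1}2)$ and $\varphi(\frac{x+y_2}2)$ linearly through $\psi_2(x)$ and $\Psi_2(x)$. The hoped-for regularity route is as follows.
\begin{itemize}
\item Show that $\varphi$ is injective on some interval, using Darboux-type and Steinhaus arguments parallel to Part 4 of the proof of Theorem \ref{thm-add_equation_contsol}. For instance, $\varphi(\frac{x+y}{2})=\varphi(\frac{x'+y}{2})$ for a whole family would force relations making $\varphi$ constant somewhere.
\item Conclude, via \cite{GF66}, that $\varphi$ is continuous and strictly monotone.
\item Express $\psi_2$ and $\Psi_2$ continuously through $\varphi$.
\end{itemize}
Alternatively, one could appeal directly to \cite{KP19} if its hypotheses only require $\varphi$ to be continuous, or to be a derivative. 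I expect this regularity step in the degenerate case to be the main obstacle.

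\emph{Case $\psi_1\neq0$ everywhere, $\varphi,\psi_1$ continuous.} Apply \cite[Theorem 11]{KP18} to \eqref{eq-auxiliary-sum}. Since $\varphi$ is nowhere constant, the first type of solution is excluded. Hence $\varphi$ is strictly monotone and $\varphi,\psi_1$ are $C^\infty$, given in the form $\varphi=\frac{c\,s+d\,t}{a\,s+b\,t}$ and $\psi_1=a\,s+b\,t$. Here $(s,t)$ is the fundamental system of $h''=\gamma h$, namely $(\sin\kappa x,\cos\kappa x)$, $(x,1)$ or $(\sinh\kappa x,\cosh\kappa x)$, and $ad-bc\neq0$ by nonconstancy. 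With $\varphi$ continuous, the second equation is solved by \cite{KP19}. Its solutions are of the form $\psi_2=\int$-type combinations of the same fundamental system. Compatibility with the first equation should force the same $\gamma$ and the same coefficients $a,b,c,d$. This matching can be done by differentiating: $\psi_2'$ should be proportional to $\psi_1$ and $\Psi_2'$ to $\varphi\psi_1$ with the same constant. That yields cases (1.1)--(1.3), where $\lambda,\nu$ are integration constants.

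Exclusivity of the seven cases follows by comparing whether $\psi_1\equiv0$ or not, whether $\psi_2$ is constant, and the sign of $\gamma$. The converse is a direct verification using the addition formulas for $\sin/\cos$ and $\sinh/\cosh$, together with the polynomial identity in the case $\gamma=0$.
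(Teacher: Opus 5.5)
The parts of your proposal that are done agree with the paper: deriving $\Psi_1=\varphi\psi_1$, the dichotomy from Theorem \ref{thm-add_equation_contsol}, the case $0\notin\psi_1(I)$, and the subcase where $\psi_2$ is constant. The genuine gap is the step you flag yourself. When $\psi_1\equiv0$ and $\psi_2$ is nonconstant, you must prove that $\varphi$ is continuous before \cite{KP19} (or \cite[Theorem 5]{Kis22}) can be applied, and neither suggested route delivers this.

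Appealing to \cite{KP19} directly is circular, since its solution theorem assumes $\varphi$ continuous. The Steinhaus/injectivity argument of Part 4 cannot be transplanted either. It runs entirely on Proposition \ref{prop-derivative_sols}\,(iv): in the sum equation, $\varphi\big(\frac{x+y}2\big)$ is a convex combination of $\varphi(x)$ and $\varphi(y)$ whenever $\psi_1(x)\psi_1(y)>0$, which uses $\Psi_1=\varphi\psi_1$. The difference equation only gives $\varphi\big(\frac{x+y}2\big)$ as the quotient $\frac{\Psi_2(x)-\Psi_2(y)}{\psi_2(x)-\psi_2(y)}$. It gives no relation between $\Psi_2$ and $\varphi\psi_2$ (putting $y=x$ yields $0=0$), so there is no midpoint-closedness of level sets to feed into Steinhaus' theorem.

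The missing idea is to turn the second equation into the first and reuse Theorem \ref{thm-add_equation_contsol}. For $h>0$ let $I_h=\{x\in I: x\pm h\in I\}$. Apply the second equation to the pairs $(x+h,y-h)$ and $(x-h,y+h)$, which share the midpoint $\frac{x+y}2$, and subtract. This shows that $\varphi$ and $\psi_2^{[h]}(x):=\psi_2(x+h)-\psi_2(x-h)$ satisfy the sum-type equation \eqref{eq-addition_h} on $I_h$; this is \cite[Theorem 2]{KP19}. Since $\psi_2=2/g_1'$, you have $\psi_2^{[h]}=1/(g_1^{[h]})'-1/(g_2^{[h]})'$ with $g_1^{[h]}(x)=\frac12g_1(x+h)$ and $g_2^{[h]}(x)=\frac12g_1(x-h)$. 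The product of these derivatives is positive, because the nonvanishing derivative $g_1'$ has constant sign. Hence Theorem \ref{thm-add_equation_contsol} applies on each $I_h$: either $\psi_2^{[h]}\equiv0$ there, or $\varphi$ is continuous there.

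If $\psi_2^{[h]}\equiv0$ for all small $h$, take $x<y$ and $h_0=\frac{y-x}{2m}$ small. The chain $g_1'(x)=g_1'(x+2h_0)=\dots=g_1'(y)$ forces $g_1'$ to be constant, which gives case (2). Otherwise there are $h_n\to0$ with $\varphi$ continuous on each $I_{h_n}$, hence on $\bigcup_n I_{h_n}=I$, and (3.1)--(3.3) follow.

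A smaller point concerns your coefficient matching. Differentiation only yields $\psi_2'=\rho\kappa\psi_1$ and $\Psi_2'=\rho\kappa\varphi\psi_1$ for some $\rho\neq0$ (with $\kappa$ replaced by $1$ when $\gamma=0$). Since the second equation is homogeneous in $(\psi_2,\Psi_2)$, nothing in the system forces $\rho=1$. The paper's proof passes over this identification equally quickly.
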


\begin{proof} 
First of all, $ \Psi_1 = \varphi \cdot \psi_1 $ 
follows from the first equation after putting $ x = y $. 
Let us observe that for 
$ \varphi $ and $ \psi_1 $ 
the assumptions of Theorem \ref{thm-add_equation_contsol} 
are fulfilled. 
Hence either $ \psi_1 $ is identically zero 
or $ \psi_1 $ does not vanish anywhere 
and $ \varphi , \psi_1 $ are continuous functions. 

{\em The case of $ 0 \notin \psi_1(I) $.} 
Here the exact form of $ \varphi $ and $ \psi_1 $, 
appearing in cases {\em (1.1)--(1.3)}, 
follows from \cite[Theorem 4]{Kis22}. 
Then, due to the continuity of $ \varphi $, 
\cite[Theorem 5]{Kis22} can be applied for the 
second equation of \eqref{eq-derivatives_system}. 
This implies the formulas for $ \psi_2 $ and $ \Psi_2 $ 
in cases {\em (1.1)--(1.3)}. 
The coefficients appearing in these formulas must 
be the same $ a,b,c,d,\gamma,\kappa $ values as 
in the formula for $ \varphi $ because 
$ \varphi $ is the common function in the two 
equations of \eqref{eq-derivatives_system}. 

{\em The case of $ \psi_1 = 0 $.} 
This assertion means 
$ \frac{1}{g_1'(x)} - \frac{1}{g_2'(x)} = 0 $ 
for all $ x \in I $. That is, 
$ g_1'(x) = g_2'(x) $, hence there exists $ c_1 \in \RR $ 
such that $ g_2(x) = g_1(x) + c_1 $ for all $ x \in I $. 
Thus the main equation \eqref{eq-Invariance-eq} 
can be written in the form 
\[
F \left( \frac{x+y}{2} \right) + f_1(x) + f_2(y) = 
G \left( g_1(x) + g_1(y) + c_1 \right) , 
\qquad x,y\in I. 
\]
Swapping the variables $x$ and $y$ and subtracting the 
two equations we arrive to 
\[ 
F \left( \frac{x+y}{2} \right) + f_1(x) + f_2(y) = 
F \left( \frac{x+y}{2} \right) + f_1(y) + f_2(x) , 
\qquad 
x,y\in I. 
\]
Consequently, $ f_1 - f_2 $ is constant, so there exists 
$ c_2 \in \RR $ such that 
$ f_2(x) = f_1(x) + c_2 $ for all $ x \in I $. 
In particular, we have 
\[
\varphi := \frac{1}{2}F' \,, \qquad 
\psi_2 := \frac{2}{g_1'} \,, \qquad
\Psi_2 := -\frac{2 f_1'}{g_1'} 
\hspace{4mm} \mbox{ and } \hspace{4mm} 
\psi_1=\Psi_1 = 0. 
\]
Thus we only need to focus on the second equation of 
the system \eqref{eq-derivatives_system}. 
We are going to utilize one of the key observations 
of \cite{KP19} and transform this equation in a specific manner. 

For any $ h > 0 $ let us define the set 
\[
I_h := \lbrace x \in I : 
x-h \in I \mbox{ and } x+h \in I \rbrace. 
\]
Moreover, let 
$ H_0 := \lbrace h > 0 : I_h \neq \emptyset \rbrace $. 
According to \cite[Theorem 2]{KP19}, for all $ h \in H_0 $, 
the functions $ \varphi \vert _{I_h} $ and 
$ \psi_2^{[h]} : I_h \map \RR $ defined by 
\[ 
\psi_2^{[h]}(x) := \psi_2(x+h) - \psi_2(x-h),  
\qquad x \in I_h 
\] 
solve the functional equation 
\begin{equation}\label{eq-addition_h}
\varphi \left( \frac{x+y}{2} \right) 
\left( \psi_2^{[h]}(x) + \psi_2^{[h]}(y) \right) = 
\varphi(x) \psi_2^{[h]}(x) + \varphi(y) \psi_2^{[h]}(y) , 
\qquad x,y \in I_h \,. 
\end{equation}
Let us observe that the assumptions of 
our Theorem \ref{thm-add_equation_contsol} are 
fulfilled for $ \varphi $ and $ \psi_2^{[h]} $. 
For $ \varphi $ this is obvious, while on the other hand, 
$ \psi_2^{[h]} $ is indeed the 
difference of the reciprocals of two appropriate derivatives. 
More precisely, for any $ h \in H_0 $ 
let us consider the functions 
$ g_1^{[h]} \,, g_2^{[h]} : I_h \map \RR $ defined by 
\[
g_1^{[h]}(x) := \frac{1}{2}g_1(x+h) \ \mbox{ and } \ 
g_2^{[h]}(x) := \frac{1}{2}g_1(x-h) \, 
\qquad x \in I_h \,. 
\] 
Then 
$ \left( g_1^{[h]} \right) ^{\prime} (x) \cdot 
\left( g_2^{[h]} \right) ^{\prime} (x) > 0 $ 
for all $ x \in I_h $, 
as $ g_1' $ is nowhere zero, so it has the same sign everywhere. 
Moreover 
\[
\psi_2^{[h]}(x) = 
\frac{2}{g_1'(x+h)} - \frac{2}{g_1'(x-h)} = 
\frac{1}{\left( g_1^{[h]} \right) ^{\prime} (x)} - 
\frac{1}{\left( g_2^{[h]} \right) ^{\prime} (x)} \,, 
\qquad x \in I_h \,. 
\]
Hence we can now apply Theorem \ref{thm-add_equation_contsol}. 
Firstly, for any fixed $ h \in H_0 \,$, 
the function $ \psi_2^{[h]} $ is 
either identically zero on $ I_h $ 
or nowhere zero on $ I_h \, $. 

{\em Case 1.} There exists $ \varepsilon \in H_0 $ such that 
for all $ 0 < h < \varepsilon $ the function 
$ \psi_2^{[h]} $ is identically zero on $ I_{h} \,$. 
We will show that in this case $ g_1' $ is a constant function. 
For this purpose let $ x, y \in I $ be two arbitrary points 
such that $ x < y $. 
Let us pick a positive integer $ m \in \NN $ such that 
\[
h_0 := \frac{y-x}{2m} < \varepsilon 
\ \mbox{would hold.}
\]
Let us consider the points 
\[ 
z_k := x + (2k-1) \cdot h_0 
\hspace{10mm} 
\mbox{for all } k = 1, 2, \dots , m .   
\]
Then $ z_1 = x + h_0 $ and $ z_m = y - h_0 $. Thus 
$ z_1 \,, z_m \in I_{h_0} $ hence $ z_k \in I_{h_0} $ 
holds as well for every $ k = 1, \dots , m $. 
Therefore 
\[
0 = \psi_2^{[h_0]}(z_k) = 
\frac{1}{g_1'(z_k + h_0)} - \frac{1}{g_1'(z_k - h_0)} 
\hspace{5mm} 
\mbox{for all } k = 1, 2, \dots , m . 
\] 
Equivalently, 
\[
g_1' \left( x + (2k-2) h_0 \right) = 
g_1' \left( z_k - h_0 \right) = 
g_1' \left( z_k + h_0 \right) = 
g_1' \left( x + 2k h_0 \right) 
\]
is true for all $ k = 1 , \dots , m $. 
This means 
\[
g_1'(x) = g_1'(x + 0h_0) = g_1'(x+ 2h_0) 
= \dots = 
g_1'(x + (2m-2)h_0) = g_1'(x + 2mh_0) = g_1'(y). 
\] 
But $ x < y $ were arbitrary points of $ I $, 
so $ g_1' $ attains the same value everywhere. 
Hence $ g_1' $ is a constant function, so there 
exists $ a \in \RR $ such that 
$ \psi_2 =  \frac{2}{g_1'} = a $. 
Hence $ \Psi_2(x) = \Psi_2 (y) $ 
for all $ x,y \in I $ and therefore 
there exists $ b \in \RR $ such that $ \Psi_2 = b $. 
This yields the possible solution 
{\em (2)} in our theorem. 

{\em Case 2.} From now on, we may assume that 
$ \psi_2 $ is not a constant function. 
When the previous case does not hold, 
there exists a sequence of positive numbers 
$ (h_n) : \NN \map H_0 $ 
such that $ h_n \to 0 $ and, 
for all $ n \in \NN $, 
the function $ \psi_2^{[h_n]} $ is not identically zero on 
the open interval $ I_{h_n} \,$. 
Applying Theorem \ref{thm-add_equation_contsol} 
for the equation \eqref{eq-addition_h} 
we can instantly establish that $ \psi_2^{[h_n]} $ 
is nowhere zero on $ I_{h_n} \,$, moreover 
$ \varphi \vert_{I_{h_n}} $ and 
$ \psi_2^{[h_n]} : I_{h_n} \map \RR $ 
are continuous functions. 
Using that $ h_n \to 0 $, we shall immediately conclude 
that $ \varphi $ is continuous on the entire interval 
\[
I = \bigcup_{n \in \NN} I_{h_n} \,. 
\]
In other words, $ \varphi = \frac{1}{2} F' $ 
is a continuous function. Hence the solutions of 
the second equation of \eqref{eq-derivatives_system} can be 
determined applying \cite[Theorem 5]{Kis22}. 
According to that, three different situations may occur 
(since the possibility of $ \psi_2 $ being constant on $I$ or 
$ \varphi $ being constant on an open subinterval is excluded). 
These solutions, described in  \cite[Theorem 5]{Kis22}, 
are listed in cases {\em (3.1)--(3.3)} of our theorem. 

In order to see that in each of the seven cases the 
appearing functions are indeed solutions of 
\eqref{eq-derivatives_system}, let us first observe that 
in case {\em (2)} both equations are trivially satisfied. 
For cases {\em (1.1)--(1.3)} we may use the sufficiency 
parts of \cite[Theorems 4 and 5]{Kis22}, while 
for cases {\em (3.1)--(1.3)} we may use the sufficiency 
part of \cite[Theorem 5]{Kis22} to check that these are indeed 
solutions. 
\end{proof} 

\begin{remark}
{\em Determining the unknown functions 
$ F, f_k \,, G , g_k $ in \eqref{eq-Invariance-eq}}. 
\\ \noindent 
At this point the solutions of the system of 
auxiliary equations \eqref{eq-derivatives_system} 
are completely characterized. 
From the definitions \eqref{eq_def_auxfuncs} of the functions 
$ \varphi, \psi_k \,, \Psi_k $ it is easy to express 
$ F', g'_k \,, f'_k \,$, using 
$ \psi_2 - (-1)^k \psi_1 = \frac{2}{g_k'} \neq 0 $. 
Namely, 
\[
F' = 2 \varphi, \qquad 
g_k' = \frac{2}{\psi_2 - (-1)^k \psi_1} \,, \qquad 
f_k' = - \frac{\Psi_2 - (-1)^k \Psi_1}{\psi_2 - (-1)^k \psi_1} \,,  
\qquad k=1,2.  
\]
Calculating $ F, g_k $ and $ f_k $ is now a matter of 
integration. When it is done, the only remaining unknown 
function $ G $ can be expressed from the main equation 
\eqref{eq-Invariance-eq}. 

This process was carried out in 
Section 4 of the paper \cite{Kis22} 
where the so-called regular solutions of 
\eqref{eq-Invariance-eq} are discussed. 
Our objective here is to conclude that the solutions 
of \eqref{eq-Invariance-eq} under our 
simple differentiability condition coincide with 
the class of solutions obtained in \cite{Kis22} 
under stronger assumptions. 
A regular solution in that setting means that 
$ F, f_k \,, G, g_k $ are continuously differentiable and 
the auxiliary functions defined in 
\eqref{eq_def_auxfuncs} solve \eqref{eq-derivatives_system}. 

The calculations for deriving 
$ F, f_k \,, G, g_k $ from $ \varphi, \psi_k \,, \Psi_k $ 
are rather lengthy and they are 
predominantly based on identities concerning trigonometric 
and hyperbolic functions and their integrals. 
Considering that the list of final solutions 
$ \left( F, f_k \,, G, g_k \right) $ is several pages 
long in itself (without the proofs), we do not intend 
to repeat them here. But for the sake of completeness, 
we provide the references and thoroughly check that 
their assumptions are fulfilled. 
\begin{itemize}
\item If the solution of the auxiliary system of equations 
\eqref{eq-derivatives_system} has the form {\em (2)} in 
Theorem \ref{thm-sols_of_derivative_system}, then 
$ g_1' = g_2' $ is constant and 
$ f_1' = f_2' $ is constant as well. Now the 
solutions of \eqref{eq-Invariance-eq} can be 
described applying 
Proposition \ref{gkc} for the interval $ J = I $. 
Let us recall that in this situation $ F $ can be arbitrary 
(however, of course, 
we have assumed that $ F $ is nowhere affine). 

\item If the solution of the auxiliary system of equations 
\eqref{eq-derivatives_system} has the form 
{\em (1.1)} or {\em (3.1)} in 
Theorem \ref{thm-sols_of_derivative_system}, then 
we can apply \cite[Theorem 16]{Kis22}. Indeed, 
the assumption of that theorem is fulfilled, because 
$ \psi_2 $ is not constant on any open subinterval 
while $ \varphi $ is a so-called 
{\em trigonometric fraction}. 

\item If the solution of the auxiliary system of equations 
\eqref{eq-derivatives_system} has the form 
{\em (1.2)} or {\em (3.2)} in 
Theorem \ref{thm-sols_of_derivative_system}, then 
we can apply \cite[Theorem 17]{Kis22}. 
The assumption of that theorem is indeed fulfilled, 
because $ \psi_2 $ is not constant on any open subinterval 
while $ \varphi $ is a so-called 
{\em linear fraction}. 

\item If the solution of the auxiliary system of equations 
\eqref{eq-derivatives_system} has the form 
{\em (1.3)} or {\em (3.3)} in 
Theorem \ref{thm-sols_of_derivative_system}, then 
we can apply \cite[Theorem 19]{Kis22}. 
The assumption of that theorem is indeed fulfilled, since 
$ \psi_2 $ is not constant on any open subinterval 
while $ \varphi $ is a so-called 
{\em hyperbolic fraction}. 
\end{itemize}
Theorems 16, 17 and 19 in the paper \cite{Kis22} provide 
necessary and sufficient conditions, therefore they 
provide the complete characterization of the solutions 
of \eqref{eq-Invariance-eq} in the case when 
$ F $ is nowhere affine. 
\end{remark}

\section{Open questions concerning our equation}\label{OQ}

As we have mentioned in the Introduction, our main 
equation \eqref{eq-Invariance-eq} is a reformulation of 
the invariance problem of 
generalized weighted quasi-arithmetic means. 
The functions $ F, f_k \,, G, g_k $ are constructed 
from the generator functions of the means in the invariance 
problem and their inverses. Those generators are assumed 
to be strictly monotone and continuous. 
Hence the ultimate goal would be to solve equation 
\eqref{eq-Invariance-eq} under the natural regularity 
assumptions, namely the continuity and strict monotonicity 
of the six unknown functions. However, at this point, \emph{it is still an open problem 
to determine the solutions of \eqref{eq-Invariance-eq} 
only assuming the differentiability of some (but not all) 
unknown functions}. 

In the derivation of our equation, its domain is originally given by the Cartesian product of the images of a given interval by two generator functions, which are not necessarily identical (for precise details, see the paper \cite{Kis22}.). In both the present paper and the earlier work \cite{Kis22}, we assumed that these images coincide; a condition that is, in fact, rather unnatural. 
\emph{Consequently, the question of what the solutions of the equation
\Eq{*}{
F\Big(\frac{x+y}2\Big)+f_1(x)+f_2(y)=G\big(g_1(x)+g_2(y)\big),\qquad x\in J_1\text{ and }y\in J_2
}
may be, in the case where the open intervals $J_1$ and $J_2$ are different, remains open.}


\section*{Statements and Declarations}

\subsection*{Funding}

Partial support for the first author's research was provided by NKFIH Grant K-134191 and by funding from the HUN-REN Hungarian Research Network. The research of the second author has been supported by 
the EK\"OP-24-0 University Research Scholarship Program of 
the Ministry for Culture and Innovation 
from the source of the 
National Research, Development and Innovation Fund; 
by the PhD Excellence Scholarship from the 
Count Istv\'an Tisza Foundation 
for the University of Debrecen; 
and by the University of Debrecen Program 
for Scientific Publication

\subsection*{Competing Interests}

The authors have no relevant financial 
or non-financial interests to disclose.

\subsection*{Author Contributions}
Both authors contributed equally to the conceptualization 
and execution of the research project. 
Section \ref{A} and \ref{PA} were mostly written by 
the first author while the second author contributed 
to these sections with observations and recommendations. 
Section \ref{NA} was mostly written by 
the second author while the first author contributed 
to this part with observations and recommendations. 
Sections \ref{Intro} and \ref{OQ} were edited jointly 
by the two authors. 
Both authors read and approved the final manuscript. 

\subsection*{Data Availability} 

No data was used for the research described in the article.

\end{document}